\journal{Journal of Computational Physics}
\theoremstyle{plain}
\newtheorem{thm}{Theorem}
\newtheorem{Lem}{Lemma}
\theoremstyle{remark}
\newtheorem{rem}{Remark}
\numberwithin{equation}{section}
\newcommand{\pderivative}[2]{\frac{\partial #1}{\partial #2}}
\newcommand{\dmat}{\mat{D}}
\newcommand{\Dmod}{\tilde{\mat{D}}}
\newcommand{\dmathat}{\mat{\hat{D}}}
\newcommand{\mmat}{\mat{{M}}}
\newcommand{\smat}{\mat{{S}}}
\newcommand\statevec[1]{\vec{#1}}
\newcommand{\jump}[1]{\left\llbracket#1\right\rrbracket}
\newcommand{\average}[1]{\left\{\!\left\{ #1\right\}\!\right\}}
\newcommand\Ftilde{\tilde{F}}
\newcommand\Gtilde{\tilde{G}}
\newcommand\Ftildeavg{\Ftilde^{\#}}
\newcommand\Gtildeavg{\Gtilde^{\#}}
\newcommand{\mbgeq}{\overset{!}{\geq}}
\newcommand{\mbleq}{\overset{!}{\leq}}
\newcommand{\half}{\frac{1}{2}}
\newcommand{\fourth}{\frac{1}{4}}
\newcommand{\mat}[1]{\mathbf{#1}}
\newcommand\tensor[1]{\overset{\text{$\leftrightarrow$}}{#1}}
\newcommand{\Dhat}{\hat{{D}}}
\definecolor{redcol}{rgb}{1.,0.,0.0}
\newcommand{\dE}{\,\mathrm{dE}}
\newcommand{\dS}{\,\mathrm{dS}}
\newsavebox{\VZerokernel}
\newsavebox{\FluxBox}
\newsavebox{\VOptkernelOne}
\newsavebox{\VOptkernelTwo}
\newsavebox{\FluxOptBox}
\begin{document}

\begin{frontmatter}

\title{An entropy stable discontinuous Galerkin method for the shallow water equations on curvilinear meshes with wet/dry fronts accelerated by GPUs}
\author[mathematik]{Niklas Wintermeyer\corref{correspondingauthor}}
\cortext[correspondingauthor]{Corresponding author}
\ead{nwinterm@math.uni-koeln.de}
\author[mathematik]{Andrew R.~Winters}
\author[mathematik]{Gregor J.~Gassner}
\author[vtech]{Timothy Warburton}

\address[mathematik]{Mathematisches Institut, Universit\"at zu K\"oln, Weyertal 86-90, 50931 K\"oln}
\address[vtech]{Department of Mathematics, Virginia Tech, 225 Stanger Street, Blacksburg, VA 24061-0123}

\numberwithin{equation}{section}

\begin{keyword}
Shallow water equations \sep Discontinuous Galerkin spectral element method \sep Shock capturing \sep Positivity preservation \sep GPUs \sep OCCA
\end{keyword}

\begin{abstract}
We extend the entropy stable high order nodal discontinuous Galerkin spectral element approximation for the non-linear two dimensional shallow water equations presented by Wintermeyer et al. [\textit{N. Wintermeyer, A. R. Winters, G. J. Gassner, and D. A. Kopriva. An entropy stable
nodal discontinuous Galerkin method for the two dimensional shallow water equations on unstructured
curvilinear meshes with discontinuous bathymetry. Journal of Computational Physics, 340:200-242,
2017}] with a shock capturing technique and a positivity preservation capability to handle dry areas. The scheme preserves the entropy inequality, is well-balanced and works on unstructured, possibly curved, quadrilateral meshes. For the shock capturing, we introduce an artificial viscosity to the equations and prove that the numerical scheme remains entropy stable. We add a positivity preserving limiter to guarantee non-negative water heights as long as the mean water height is non-negative. We prove that non-negative mean water heights are guaranteed under a certain additional time step restriction for the entropy stable numerical interface flux. We implement the method on GPU architectures using the abstract language OCCA, a unified approach to multi-threading languages. We show that the entropy stable scheme is well suited to GPUs as the necessary extra calculations do not negatively impact the runtime up to reasonably high polynomial degrees (around $N=7$). We provide numerical examples that challenge the shock capturing and positivity properties of our scheme to verify our theoretical findings.
\end{abstract}

\end{frontmatter}

\section{Introduction}

The shallow water equations including a non-constant bottom topography are a system of hyperbolic balance laws
\begin{equation}
\begin{aligned}\label{sw-invisc}
 h_t + (hu)_x + (hv)_y &= 0 ,\\
(hu)_t + \left(h u^2+ \half g\,h^2\right)_{\!\!x} + (huv)_y &= -g h b_x ,\\
(hv)_t + (huv)_x + \left(h v^2+\half g\,h^2\right)_{\!\!y} &= -g h b_y ,\\
\end{aligned}
\end{equation}
that are useful to model fluid flows in lakes, rivers, oceans or near coastlines, e.g \cite{bonev2017discontinuous,whitham1974,Marras_GalerkinViscSW}. We compactly write the system \eqref{sw-invisc} as
\begin{equation}
\begin{aligned}\label{sw-invisc-compact}
 \vec{w}_t + \nabla \cdot (\vec{f},\vec{g})^T &=   \vec{\mathcal{S}} ,\\
\end{aligned}
\end{equation}
with $\vec{w} = (h,hu,hv)^T$, $\vec{f} = (hu, hu^2 + \half gh^2, huv)^T$ and $\vec{g} = (hv, huv, hv^2+\half gh^2)^T$ and source term $ \vec{\mathcal{S}} = (0, -gh b_x, -gh b_y)^T$. The water height is denoted by $h=h(x,y,t)$ and is measured from the bottom topography $b=b(x,y)$. The total water height is therefore $H=h+b$. The fluid velocities are $u=u(x,y,t)$ and $v=v(x,y,t)$. An important steady state solution of \eqref{sw-invisc} is the preservation of a flat lake with no velocity, the so-called ``lake at rest'' condition
\begin{equation}
\label{eq:lakeAtRest}
\begin{aligned}
&h+b=\textrm{const},\\
&u=v=0.\\
\end{aligned}
\end{equation}
A numerical scheme that preserves non-trivial steady state solutions, such as the ``lake at rest'' problem, is \textit{well-balanced}, e.g. \cite{noelle2007,gassner2016well}. Methods that are not well-balanced can produce spurious waves in the magnitude of the mesh size truncation error which pollutes the solution quality. This is particularly problematic because many interesting shallow water phenomena can be interpreted as perturbations from the lake at rest condition \cite{leveque1998}.

Due to the non-linear nature of the shallow water equations \eqref{sw-invisc}, discontinuous solutions may develop regardless of the smoothness of the initial conditions. Therefore, solutions to the PDEs \eqref{sw-invisc} are sought in the weak sense. Unfortunately, weak solutions are non-unique and additional admissibility criteria are required to extract the physically relevant solution from the family of weak solutions. One important criteria for physically relevant solutions is the second law of thermodynamics, which guarantees that the entropy of a physical system increases as the fluid evolves. In mathematics, a suitable strongly convex entropy function can be used to ensure a numerical approximation obeys the laws of thermodynamics discretely \cite{tadmor2003}. A numerical scheme that satisfies the second law of thermodynamics is said to be \textit{entropy stable}. Physically, entropy must be dissipated in the presence of discontinuities. In order to discuss the mathematical entropy for the shallow water equations a suitable entropy function is the total energy $e=e(\vec w)$
\begin{equation}
    \label{eq:entropyFunction}
    e := \half h(u^2+v^2) + \half g h^2 + ghb.
\end{equation}
We take the derivative of the entropy function with respect to the conservative variables $\vec w$ to find the set of entropy variables $\vec q = \pderivative{e}{\vec w}$, which are
\begin{equation}
\label{eq:entropyVariables}
    q_1 = gH - \half (u^2 +v^2), \quad\quad q_2 = u, \quad\quad q_3 = v.
\end{equation}
If we contract the shallow water equations \eqref{sw-invisc} from the left with the entropy variables and apply consistency conditions on the fluxes developed by Tadmor \cite{tadmor1984} we obtain the entropy conservation law
\begin{equation}
\label{eq:entropyEquality}
    e_t + \mathcal F_x + \mathcal G_y = 0,
\end{equation}
with the entropy fluxes $\mathcal{F} = \half hu ( u^2+v^2) + g hu (h+b)$ and $\mathcal{G} = \half hv ( u^2+v^2) + g hv (h+b)$. In the presence of discontinuities \eqref{eq:entropyEquality} becomes the entropy inequality
\begin{equation}
\label{eq:entropyInequality}
    e_t + \mathcal F_x + \mathcal G_y \leq 0.
\end{equation}

 Unfortunately, for high-order numerical methods the discrete satisfaction of \eqref{eq:entropyInequality} does not guarantee that the approximation solution will be overshoot free, e.g. \cite{ESDGSEM2D_paper}. Therefore, the first contribution of this work is to add a shock capturing method that maintains the entropy stability of the nodal discontinuous Galerkin method (DGSEM) on curvilinear quadrilateral meshes developed by Wintermeyer et al. \cite{ESDGSEM2D_paper}. In particular, artificial viscosity is added into the two momentum equations. The amount of artificial viscosity is selected with the method developed by Persson and Peraire \cite{persson2006}. Even with shock capturing there can still be issues maintaining the positivity of the water height, $h$, particularly in flow regions where $h\rightarrow 0$. Thus, our second contribution is to incorporate the positivity preserving limiter of Xing et al. \cite{xing2010positivity} in an entropy stable way. To fulfill the requirements of the positivity limiter, we formally show that the entropy stable numerical fluxes of the entropy stable discontinuous Galerkin spectral element method (ESDGSEM) preserve positive mean water heights on two-dimensional curved meshes. We then generalize a result from Ranocha \cite{ranocha2017} to show that the posivitity preserving limiter itself is entropy stable on curvilinear meshes.
 
 Our third, and final, contribution is to implement the two-dimensional positive ESDGSEM on GPUs. The entropy stable approximation is built with specific split forms, which are linear combinations of the conservative and advective forms of the shallow water equations \cite{gassner2016well,ESDGSEM2D_paper}. However, the method remains fully conservative \cite{fisher2013} albeit with additional computational complexity in the form of an increased number of arithmetic operations, but without the need for more data storage and transfer. Therefore, the ESDGSEM seems a perfect candidate for implementation on GPUs. We demonstrate that this expectation is true through careful analysis and detailed discussion of how to implement the numerical method into a unified approach for multi-threading languages OCCA \cite{medina2014occa}.

This work is organized as follows: In Sec. \ref{Sec:DG} we briefly provide background details of the ESDGSEM described in \cite{ESDGSEM2D_paper} that serves as the baseline scheme. Next, shock capturing by way of artificial viscosity is discussed in Sec. \ref{Sec:artVisc}. A positivity preserving limiter is presented in Sec. \ref{sec:wetDry} that maintains the entropy stability of the DGSEM. Implementation details and analysis of the ESDGSEM compared to the standard DGSEM on GPUs is given in Sec. \ref{sec:GPU}. Numerical results that exercise and test the capabilities of the positive ESDGSEM are provided in Sec. \ref{sec:num}. Concluding remarks are given in the final section.

\section{Entropy Stable Discontinuous Galerkin Spectral Element Method}
\label{Sec:DG}
We briefly present the ESDGSEM developed in \cite{ESDGSEM2D_paper,gassner2016well,winters2015comparison} which we will extend in the following sections. The entropy stable scheme is based on a nodal DG approximation to the split-form shallow water equations on curved quadrilateral meshes. We start with a brief description of the curvilinear transformations and numerical approximations before summarizing the main properties of the ESDGSEM in Lemma \ref{Lem:ESDGSEM}. We refer to \cite{hesthaven_warburton,koprivabook} for a more thorough derivation of DG methods and to \cite{carpenter_esdg,gassner2016,ESDGSEM2D_paper} for a background on the entropy stable split form DG framework.
\subsection{Curvilinear mappings}
We separate the domain $\Omega$ into $K$ non-overlapping elements $E_k \subset \Omega$ and proceed by constructing a mapping on each element between the computational reference element $E=[-1,1]^2$ with coordinates $(\xi ,\eta)$ and the physical coordinates $(x,y)$. 
We use a transfinite interpolation with linear blending \cite{koprivabook}, where the Jacobian is computed by 
\begin{equation}
    \label{JacobianDef}
\mathcal J=x_{\xi}\,y_{\eta} - x_{\eta}\,y_{\xi} .
\end{equation}
The gradients in physical space $\nabla = \left(\pderivative{}{x}, \pderivative{}{y}\right)^T$ and computational space $\hat\nabla = \left(\pderivative{}{\xi}, \pderivative{}{\eta}\right)^T$ are related by the chain rule
\begin{equation}
\label{NablaRelation}
\begin{aligned}
\nabla	 = 
\frac{1}{\mathcal J}\left(
		\begin{array}{cc}
			y_\eta & -y_\xi      \\
			-x_\eta&  x_\xi    \\
		\end{array}
	\right) \hat \nabla.									
\end{aligned}
\end{equation}
With a sufficiently smooth mapping we can use \eqref{NablaRelation} to replace the physical $x$ and $y$ derivatives in \eqref{sw-invisc-compact} to get the PDE in reference space
\begin{equation}
\label{sw-invisc-refspace}
 \mathcal J \vec{w}_t + \hat\nabla \cdot (\vec{\tilde{f}},\vec{\tilde{g}})^T =   \vec{\tilde{\mathcal{S}}} ,
\end{equation}
where we introduce the contravariant fluxes defined by
\begin{equation}
\label{ContravariantFluxes}
\begin{aligned}
\vec{\tilde{f}}(\statevec{w})&=y_{\eta}\,\vec{f}(\statevec{w}) - x_{\eta}\,\vec{g}(\statevec{w}),\\
\vec{\tilde{g}}(\statevec{w})&=-y_{\xi}\,\vec{f}(\statevec{w}) + x_{\xi}\,\vec{g}(\statevec{w}),
\end{aligned}
\end{equation}
and the gradient of the bottom topography transforms by \eqref{NablaRelation} to create the contravariant source term $\vec{\tilde{\mathcal{S}}}$.

To find the weak form of the balance law in reference space \eqref{sw-invisc-refspace}, we multiply by a smooth test function $\phi $ and integrate over the reference element. Then we use integration by parts to move the differentiation of the fluxes, $\hat\nabla \cdot (\vec{\tilde{f}},\vec{\tilde{g}})^T$, onto the test function. We replace the fluxes across element interfaces by numerical surface fluxes $\tilde{F}^*$ and $\tilde{G}^*$ and obtain the weak form
\begin{equation}\label{Eq:ContinuousWeakForm}
\int\limits_E  \mathcal J\,w \,  \phi \dE - \oint\limits_{\partial E} \phi \, \left(\tilde{F}^* , \tilde{G}^*\right)\, \cdot \, \vec{n}\dS - \int\limits_E \,\left(\tilde{f}, \tilde{g}\right) \cdot \hat\nabla  \phi \dE =  \int\limits_E \,\vec{\tilde{\mathcal{S}}} \phi \dE.
\end{equation}
Integrating by parts once more yields the strong form 
\begin{equation}\label{Eq:ContinuousStrongForm}
\int\limits_E  \mathcal J\,w \,  \phi  \dE 
- \oint\limits_{\partial E} \phi \, \left(\tilde{F}^*  -\tilde{f}, \tilde{G}^* -\tilde{g}\right)\, \cdot \, \vec{n}\dS 
+ \int\limits_E \, \hat\nabla \cdot \left(\tilde{f}, \tilde{g}\right)^T \phi \dE =  \int\limits_E \, \vec{\tilde{\mathcal{S}}} \phi \dE.
\end{equation}

\subsection{Numerical Approximations}
We approximate quantities of interest such as variables $\vec{w}$ and fluxes $\vec{f}$, $\vec{g}$ by polynomials of degree $N$ and denote them by capital letters $\vec W$ or $\vec F$ and $\vec G$. We use a nodal form of the interpolation with nodes defined at the Legendre-Gauss-Lobatto (LGL) points $\{\xi_i\}_{i=0}^N$ and $\{\eta_j\}_{j=0}^N$ in the reference square.
We write the element-wise polynomial approximation (e.g for a component $W$ of $\statevec{W}$) as
\begin{equation}
\label{poly_approx}
w(x,y,t)\big|_{G}=w(x(\xi,\eta),y(\xi,\eta),t)\approx W(\xi,\eta,t) :=\sum\limits_{i=0}^N\sum\limits_{j=0}^N W_{i,j}(t)\,\ell_i(\xi)\,\ell_j(\eta),
\end{equation}
where $\{W_{i,j}(t)\}_{i,j=0}^{N,N}$ are the time dependent nodal degrees of freedom and the one-dimensional Lagrange basis functions for the interpolant are
\begin{equation}
\label{lagrange_basis}
\ell_j(\xi)=\prod\limits_{i=0,i\neq j}^N\frac{\xi - \xi_i}{\xi_j-\xi_i},\qquad j=0,\ldots,N.
\end{equation} 
Derivatives are approximated element-wise directly from the derivative of the polynomial approximation, e.g.,  
 \begin{equation}
 \frac{\partial }{\partial \xi}W(\xi,\eta,t) = \sum\limits_{i=0}^N\sum\limits_{j=0}^N W_{i,j}(t)\,\frac{\partial }{\partial \xi}\ell_i(\xi)\,\ell_j(\eta).
 \end{equation}
We introduce the polynomial derivative operator $\dmat$ with entries
 \begin{equation}
 \label{DmatDef}
 D_{ij}:=\frac{\partial\ell_j}{\partial\xi}\Bigg|_{\xi=\xi_i},\qquad i,j=0,\ldots,N,
 \end{equation}
 which is used to calculate the derivative with respect to $\xi$ at the interpolation nodes. It follows from the tensor product ansatz that the same derivative operator $\dmat$ can be used to evaluate derivatives in $\xi$ and $\eta$ direction.
 We also approximate integrals numerically using LGL quadrature and define the mass matrix $\mat{M} = \text{diag}(\omega_0,\omega_1,\ldots,\omega_N)$ with LGL quadrature weights on the diagonal. By the tensor product ansatz the two-dimensional quadrature is
  \begin{equation}
 \int\limits_{-1}^{1} \int\limits_{-1}^{1} w(\xi,\eta) \, \mathrm{d}\xi \mathrm{d}\eta \approx \sum_{i=0}^N\sum_{j=0}^N w(\xi_i,\eta_j) \omega_i \omega_j.
  \end{equation}
 We choose the test function $\phi$ in \eqref{Eq:ContinuousWeakForm}-\eqref{Eq:ContinuousStrongForm} to be a polynomial in the reference element $E$ 
 \begin{equation}
 \phi^E = \sum_{i=0}^N\sum_{j=0}^N \phi^E_{i,j} \ell_i (\xi) \ell_j (\eta).
 \end{equation}
 This choice of test functions and the collocation of the interpolation and quadrature nodes enables us to use the Kronecker delta property of the Lagrange interpolating polynomials to greatly simplify the integrals in \eqref{Eq:ContinuousWeakForm}. For example, the integral on the time derivative term becomes
 \begin{equation}\label{Eq:CollocationExample}
\int\limits_E \,\mathcal{J} W_t \ell_i (\xi) \ell_j (\eta)\dE 
\approx  \mathcal{J}_{ij} ({W_t})_{ij} \omega_i\omega_j.
\end{equation}
A significant advantage of the LGL quadrature nodes is that the corresponding derivative operator $\mat{D}$ \eqref{DmatDef} and the mass matrix $\mat{M}$ satisfy the summation-by-parts (SBP) property,
 \begin{equation}
 \label{SBP_def}
\mmat \dmat + \left(\mmat \dmat\right)^T = \text{diag}(-1,0,\ldots,0,1) ,
\end{equation}
for all polynomial orders \cite{gassner_skew_burgers}. The SBP property enables us to make a connection between LGL-based discontinuous Galerkin methods and sub-cell finite volume type differencing methods as proposed by Fisher and Carpenter \cite{fisher2013}. In \cite{ESDGSEM2D_paper} we used this relation to develop an entropy stable discontinuous Galerkin spectral element method for the shallow water equations on curvilinear geometries.
The SBP property \eqref{SBP_def} also implies $\dmat=-\smat+\hat{\dmat}$
with surface matrix $\mat{S} := \textrm{diag}\left(\frac{1}{\omega_0},0,\ldots,0,-\frac{1}{\omega_N}\right)$ and $\dmathat := - \mmat^{-1} \dmat \mmat$. It also follows that strong and weak form discretizations are equivalent \cite{KoprivaGassner_GaussLob}. We rewrite the derivative operator $\dmat$ in the computations to incorporate the surface parts and a factor of $2$ for the interior that stems from the split form approach \cite{gassner2016}
\begin{equation}
\label{eq:Dmat_modification}
\Dmod := 2 \dmat + \smat .
\end{equation}
The full discretization as well as a summary of the main properties of the ESDGSEM can be found in Lemma \ref{Lem:ESDGSEM}. We refer to \cite{ESDGSEM2D_paper} for the detailed proofs and derivations. As a notational convenience we introduce jump $\jump{\cdot}$ and average $\average{\cdot}$ operators which are defined by
\begin{equation}
\begin{aligned}
    \label{eq:jumpavgDef}
    \jump{W} &:= W^+ - W^-, \\
    \average{W} &:= \half \left( W^+ + W^-\right).
\end{aligned}
\end{equation}
We note that jumps only occur on element interfaces and have an orientation. The ``$+$'' and ``$-$'' states here are strictly related to the normal vector on the interface and the normal is always pointing outward from the ``$-$'' element and into the ``$+$'' element. The averages are also used for the evaluation of the interior two point fluxes and have no orientation, but require special compact notation. For example, in the $\xi-$direction, the interior arithmetic mean is
\begin{equation}
\average{\cdot}_{(i,m),j} = \frac{1}{2}\left(\left(\cdot\right)_{ij}+\left(\cdot\right)_{mj}\right).
\end{equation}
\begin{Lem}[ESDGSEM]\label{Lem:ESDGSEM}
The semi-discrete split DG approximation to the two dimensional shallow water equations on curvilinear grids
\begin{equation}\label{Eq:CurvilinearECDGSEM}
J \vec{W}_{t} + \vec{\mathcal{L}}_{\xi}+ \vec{\mathcal{L}}_{\eta} = \vec{\mathcal{S}} 
\end{equation}
with
\begin{equation}\label{eq:fluxDiffFormF}
\begin{aligned}
\left(\vec{\mathcal{L}}_{\xi}\right)_{ij} &=\frac{1}{\omega_i} \left(\delta_{iN} \left[\vec{\tilde{F}}^{*,es}\right]_{Nj} - \delta_{i0}\left[\vec{\tilde{F}}^{*,es}\right]_{0j}\right)  
+\sum\limits_{m=0}^{N} \tilde{D}_{i m} \vec{\tilde{F}}_{(i,m),j},
\\
\left(\vec{\mathcal{L}}_{\eta}\right)_{ij} &= \frac{1}{\omega_j} \left(\delta_{Nj} \left[\vec{\tilde{G}}^{*,es}\right]_{iN} - \delta_{0j}\left[\vec{\tilde{G}}^{*,es}\right]_{i0}\right) 
+\sum\limits_{m=0}^{N}\,\tilde{D}_{m j}\vec{\tilde{G}}_{i,(m,j)},
\end{aligned}
\end{equation}
with curvilinear volume fluxes
\begin{equation}
\begin{aligned}
\label{Fmij}
\vec{\tilde{F}}_{(i,m),j} &:= \vec{F}^{\#}(\statevec{W}_{i,j},\statevec{W}_{m,j})\average{y_\eta}_{(i,m),j}-\vec{G}^{\#}(\statevec{W}_{i,j},\statevec{W}_{m,j})\average{x_\eta}_{(i,m),j}, \\
\vec{\tilde{G}}_{i,(m,j)} &:= -\vec{F}^{\#}(\statevec{W}_{i,j},\statevec{W}_{i,m})\average{x_\xi}_{i,(j,m)}+\vec{G}^{\#}(\statevec{W}_{i,j},\statevec{W}_{i,m})\average{y_\xi}_{i,(j,m)},
\end{aligned}
\end{equation}
where the entropy conserving two-point volume fluxes are defined by
\begin{equation}\label{eq:volumeFluxes}
\begin{aligned}
\vec{F}^{\#}(\statevec{W}_{i,j},\statevec{W}_{m,j}) := \begin{pmatrix}
\average{hu}_{(i,m),j}\\[0.1cm]
\average{hu}_{(i,m),j}\average{u}_{(i,m),j} + \,g\, \average{h}_{(i,m),j}^2 - \half g \average{h^2}_{(i,m),j} \\[0.1cm]
\average{hu}_{(i,m),j}\average{v}_{(i,m),j}
\end{pmatrix},\\[0.2cm]
\vec{G}^{\#}(\statevec{W}_{i,j},\statevec{W}_{i,m}) := \begin{pmatrix}
\average{hv}_{i,(m,j)}\\[0.1cm]
\average{hv}_{i,(m,j)}\average{u}_{i,(m,j)}\\[0.1cm]
\average{hv}_{i,(m,j)}\average{v}_{i,(m,j)}  + \,g\, \average{h}_{i,(m,j)}^2 - \half g \average{h^2}_{i,(m,j)}
\end{pmatrix}.
\end{aligned}
\end{equation}
The numerical interface flux in normal direction is given by
\begin{equation}
\label{NumericalSurfaceFlux}
{\tilde{F}}^{*,es} =n_x \vec{F}^{*,es} +  n_y \vec{G}^{*,es} ,
\end{equation}
and is a combination of the fluxes in $x$ and $y$ direction
\begin{equation}
\begin{aligned}
\label{eq:fesandges}
\vec{F}^{*,es} &=\vec{F}^{*,ec} - \half \mat{R}_f  \, \big|\mat{\Lambda}_f \big| \, \mat{R}_f^T \jump{\,\statevec{q}\,},\\
\vec{G}^{*,es} &=\vec{G}^{*,ec} - \half \mat{R}_g  \, \big|\mat{\Lambda}_g \big| \, \mat{R}_g^T \jump{\,\statevec{q}\,},\\
\end{aligned}
\end{equation}
which include the entropy conserving fluxes $\statevec{F}^{*,ec}$ and $\statevec{G}^{*,ec}$ as well as an entropy stable dissipation term which depends on the scaled flux eigenvalues
\begin{equation}
\begin{aligned}
\big|\mat{\Lambda}_f \big|  &=\frac{1}{2g}\begin{pmatrix}
\big|\average{u}+\average{c} \big|& 0 & 0\\
 0 & 2g\big|\average{h}\average{u}\big| & 0 \\
0 & 0 & \big|\average{u}-\average{c}\big|
\end{pmatrix},
\\
\big|\mat{\Lambda}_g \big|  &=\frac{1}{2g}\begin{pmatrix}
\big|\average{v}+\average{c} \big|& 0 & 0\\
 0 & 2g\big|\average{h}\average{v}\big| & 0 \\
0 & 0 & \big|\average{v}-\average{c}\big|
\end{pmatrix},
\end{aligned}
\end{equation}
and eigenvectors
\begin{equation}
\begin{aligned}
\mat{R}_f &=\begin{pmatrix}
1 & 0 & 1\\
\average{u}+\average{c} & 0 & \average{u}-\average{c} \\
\average{v} & 1 & \average{v}\\
\end{pmatrix},\\
\mat{R}_g &=\begin{pmatrix}
1 & 0 & 1\\
\average{u} & 1 & \average{u} \\
\average{v}+\average{c} & 0 & \average{v}-\average{c}\\
\end{pmatrix},
\end{aligned}
\end{equation}
and the jump in entropy variables $\jump{q}$.
The entropy conserving interface fluxes are
\begin{equation}\label{eq:surfaceFluxes}
\begin{aligned}
\statevec{F}^{*,ec}(\statevec{W}^{+},\statevec{W}^{-}) := \begin{pmatrix}
\average{h}\average{u}\\[0.1cm]
\average{h}\average{u}^2 + \frac{1}{2}\,g\,\average{h^2}\\[0.1cm]
\average{h}\average{u}\average{v}
\end{pmatrix},\\[0.2cm]
\statevec{G}^{*,ec}(\statevec{W}^{+},\statevec{W}^{-}) := \begin{pmatrix}
\average{h}\average{v}\\[0.1cm]
\average{h}\average{u}\average{v}\\[0.1cm]
\average{h}\average{v}^2 + \frac{1}{2}\,g\,\average{h^2}
\end{pmatrix},
\end{aligned}
\end{equation}
and the components of the dissipation term can be found in \ref{app:SimpleNumFlux}. The source term discretization $ \vec{\mathcal{S}}$ is defined by
\begin{equation}
\begin{aligned}
\label{SourceDisc}
(\tilde{\mathcal{S}}_1)_{ij} &:= 0 ,\\
(\tilde{\mathcal{S}}_2)_{ij} &:= \frac{g}{2} \left(-h b_x
+ \mathcal{S}_{i,j}^{x,*}
\right),\\
(\tilde{\mathcal{S}}_3)_{ij} &:=\frac{g}{2} \left(-hb_y
+  \mathcal{S}_{i,j}^{y,*}
 \right),
\end{aligned}
\end{equation}
where the additional interface penalty terms $\mathcal{S}^{\cdot,*}$ that account for possibly discontinuous bottom topographies can be found in \cite{ESDGSEM2D_paper} and the derivatives of the bottom topography are discretized in split form
\begin{equation}
\begin{aligned}
(b_x)_{ij} &=(y_{\eta})_{ij}\sum\limits_{m=0}^{N}2\,D_{i m} b_{mj} + \sum\limits_{m=0}^{N}2\,D_{i m}(y_{\eta} b)_{mj}
		- (y_{\xi})_{ij}\sum\limits_{m=0}^{N}2\,D_{j m} b_{im} - \sum\limits_{m=0}^{N}2\,D_{j m} (y_{\xi}b)_{im}, \\
(b_y)_{ij} &=
(x_{\eta})_{ij}\sum\limits_{m=0}^{N}2\,D_{i m} b_{mj} + \sum\limits_{m=0}^{N}2\,D_{i m}(x_{\eta} b)_{mj}
-(x_{\xi})_{ij}\sum\limits_{m=0}^{N}2\,D_{j m} b_{im} - \sum\limits_{m=0}^{N}2\,D_{j m} (x_{\xi}b)_{im}.
\end{aligned}
\end{equation}
The scheme described above is called the ESDGSEM and has the following properties:
   \begin{enumerate}[label={\theLem.\arabic*}]
       \item Discrete conservation of the mass and discrete conservation of the momentum if the bottom topography is constant.\label{Thm2partone}
       \item Guaranteed dissipation of the total discrete energy, which is an entropy function for the shallow water equations. Hence it fulfills the discrete entropy inequality \eqref{eq:entropyInequality}. \label{Thm2parttwo}
       \item Discrete well-balanced property for arbitrary bottom topographies. \label{Thm2partthree}
   \end{enumerate}
\end{Lem}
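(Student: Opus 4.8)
The plan is to establish the three asserted properties separately, in each case exploiting the flux-differencing structure \eqref{Eq:CurvilinearECDGSEM}--\eqref{eq:fluxDiffFormF} together with the SBP property \eqref{SBP_def}; the full argument is carried out in \cite{ESDGSEM2D_paper}, so here I only outline the mechanism. The common backbone is that $\Dmod=2\dmat+\smat$ acting on a \emph{symmetric} and \emph{consistent} two-point flux telescopes: since the volume fluxes $\vec F^{\#}$, $\vec G^{\#}$ in \eqref{eq:volumeFluxes} are symmetric in their two state arguments and the metric averages $\average{y_\eta}$, $\average{x_\eta}$, $\average{x_\xi}$, $\average{y_\xi}$ are symmetric, the curvilinear volume fluxes obey $\vec{\tilde F}_{(i,m),j}=\vec{\tilde F}_{(m,i),j}$, while consistency $\vec F^{\#}(\statevec W,\statevec W)=\vec f(\statevec W)$ recovers the contravariant physical flux on the diagonal $m=i$, where the $\smat$ part of $\Dmod$ generates precisely the $-\vec{\tilde f}$, $-\vec{\tilde g}$ strong-form surface corrections. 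For \ref{Thm2partone}, test each scalar equation in \eqref{Eq:CurvilinearECDGSEM} with the constant $1$, i.e.\ sum over all nodes against the weights $\omega_i\omega_j$. In the $\xi$-direction $\sum_i\omega_i\sum_m\tilde D_{im}\vec{\tilde F}_{(i,m),j}$ vanishes identically: the $2\dmat$-part telescopes by \eqref{SBP_def} and the symmetry of $\vec{\tilde F}_{(i,m),j}$ to $\vec{\tilde f}_{Nj}-\vec{\tilde f}_{0j}$, which is exactly cancelled by the diagonal $\smat$-part $\vec{\tilde f}_{0j}-\vec{\tilde f}_{Nj}$; hence $\sum_i\omega_i(\vec{\mathcal L}_\xi)_{ij}=\vec{\tilde F}^{*,es}_{Nj}-\vec{\tilde F}^{*,es}_{0j}$, and analogously in $\eta$. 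Summing over all elements the single-valued numerical interface fluxes cancel pairwise (opposite outward normals), leaving only the physical domain boundary, which yields discrete mass conservation for arbitrary $b$; for the momentum components the source \eqref{SourceDisc} vanishes when $b$ is constant ($b_x=b_y=0$ and the interface bathymetry penalties are zero), so momentum is conserved in that case.

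\textbf{Entropy inequality (\ref{Thm2parttwo}).} Contract \eqref{Eq:CurvilinearECDGSEM} from the left, node by node, with the entropy variables $\statevec Q_{ij}$ from \eqref{eq:entropyVariables}, multiply by $\omega_i\omega_j$ and sum over all nodes and elements. The time term produces $\tfrac{d}{dt}\sum\omega_i\omega_j J_{ij} e_{ij}$, the rate of change of the total discrete energy. The decisive algebraic fact is that $\vec F^{\#}$, $\vec G^{\#}$ are entropy conserving in Tadmor's sense, $\jump{\statevec q}^T\vec F^{\#}=\jump{\psi^f}$ and $\jump{\statevec q}^T\vec G^{\#}=\jump{\psi^g}$ with potentials $\psi^f:=\statevec q\cdot\vec f-\mathcal F$, $\psi^g:=\statevec q\cdot\vec g-\mathcal G$; fed into the same Fisher--Carpenter/Tadmor telescoping, and combined with the entropy-consistent source discretization \eqref{SourceDisc}, the interior contraction collapses to a discrete surface entropy flux (a consistent discretization of $\mathcal F_x+\mathcal G_y$). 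The interface contributions then reduce to $-\tfrac12\sum_{\text{interfaces}}\jump{\statevec q}^T\big(n_x\mat R_f|\mat\Lambda_f|\mat R_f^T+n_y\mat R_g|\mat\Lambda_g|\mat R_g^T\big)\jump{\statevec q}$, which is non-positive because each $|\mat\Lambda_\bullet|$ is diagonal with non-negative entries, hence $\mat R_\bullet|\mat\Lambda_\bullet|\mat R_\bullet^T$ is symmetric positive semi-definite. Summing over elements, the conservative surface entropy fluxes telescope to the domain boundary and one obtains $\tfrac{d}{dt}\sum\omega_i\omega_j J_{ij} e_{ij}\le(\text{boundary terms})$, i.e.\ the discrete entropy inequality \eqref{eq:entropyInequality}.

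\textbf{Well-balancedness (\ref{Thm2partthree}).} Insert the lake-at-rest data \eqref{eq:lakeAtRest}, $U=V=0$ and $H=h+b=\mathrm{const}$, into \eqref{Eq:CurvilinearECDGSEM} and check that $\vec{\mathcal L}_\xi+\vec{\mathcal L}_\eta=\vec{\mathcal S}$, so that $\statevec W_t=0$. The mass equation is immediate since every flux carries a factor $hu$ or $hv$, both zero. In the momentum equations all advective terms drop and one is left with the ``pressure'' pieces $g\average{h}^2-\tfrac12 g\average{h^2}$ of the volume and surface fluxes balanced against the discrete source $\tfrac g2(-hb_x+\mathcal S^{x,*})$; using $\jump{h}=-\jump{b}$, the reduction of $\average{h}^2-\tfrac12\average{h^2}$ to half the product of the two participating heights, and the split-form discretization of $b_x,b_y$, one verifies the cancellation node by node, the interface bathymetry penalties $\mathcal S^{\cdot,*}$ being constructed precisely so that this cancellation persists when $b$ jumps across an element interface.

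\textbf{Main obstacle.} The conservation claim is routine. The delicate part is that the source-term discretization \eqref{SourceDisc}, including the interface penalties for discontinuous bathymetry, must be simultaneously entropy consistent (for \ref{Thm2parttwo}) and exactly well-balanced (for \ref{Thm2partthree}); verifying this compatibility, together with ensuring the curvilinear telescoping leaves no residual volume term (in two dimensions guaranteed by the exact discrete metric identities of the polynomial mapping), is the technical heart of the argument and is where \cite{ESDGSEM2D_paper} does the real work.
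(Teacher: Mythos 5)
Your proposal is correct and follows essentially the same route as the paper, which simply defers the proof of this summary lemma to \cite{ESDGSEM2D_paper}: your outline (SBP/flux-differencing telescoping with symmetric, consistent two-point fluxes for conservation, contraction with the entropy variables plus the positive semi-definite interface dissipation $\mat{R}\big|\mat{\Lambda}\big|\mat{R}^T$ for the entropy inequality, and the lake-at-rest cancellation against the split-form bottom-topography discretization and interface penalties for well-balancedness) is precisely the mechanism carried out in detail there. No discrepancy to report.
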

\begin{proof}
See \cite{ESDGSEM2D_paper}.
\end{proof}
The semi-discrete ESDGSEM is only complete when equipped with a suitable time integrator. We previously used a low storage Runge-Kutta scheme in \cite{ESDGSEM2D_paper} but have now switched to a strong stability preserving Runge-Kutta scheme (SSPRK). This choice is necessary for the positivity preservation proof in Section \ref{sec:wetDry}. We use the SSPRK method that is frequently used in wet/dry shallow water schemes as in \cite{xing2010positivity} and shock capturing schemes as in \cite{shu1989efficient}. The three stage scheme is 
\begin{equation}
\begin{aligned}
    \label{RKSSP}
    W^{(1)} & = W^{n} + \Delta t  \mathcal{R}\left(W^{n}\right), \\
    W^{(2)} & = \frac{3}{4} W^{n} + \fourth \left(W^{(1)}+ \Delta t  \mathcal{R}\left(W^{1}\right) \right),\\
    W^{n+1} & = \frac{1}{3} W^{n} + \frac{2}{3} \left(W^{(2)}+ \Delta t  \mathcal{R}\left(W^{2}\right) \right),\\
\end{aligned}
\end{equation}
where $\mathcal{R}$ denotes the spatial ESDGSEM operator
\begin{equation}
 \mathcal{R} =- \frac{1}{J} \left(  \vec{\mathcal{L}}_{\xi}+ \vec{\mathcal{L}}_{\eta} - \vec{\mathcal{S}} \right). 
\end{equation}

\section{Artificial Viscosity}
\label{Sec:artVisc}
The entropy stable scheme is more robust compared to a standard DGSEM for the shallow water equations \cite{gassner2016well,ESDGSEM2D_paper}, but it is not oscillation free in the presence of shocks. These oscillations might lead to unphysical solutions and cause simulations to crash if the water height becomes negative. We discuss the addition of a positivity limiter in the next section. Usually, such positivity preserving limiters are coupled with some form of TVB limiter \cite{zhang2010positivity} or artificial viscosity \cite{Marras_GalerkinViscSW} to keep oscillations in check. Our main requirement for the additional smoothing or limiting is to preserve the entropy stability of the scheme. In \cite{Gassner2017} the authors prove that under certain conditions, adding artificial viscosity to the scheme maintains the entropy stability. We choose the gradient variables carefully and use the discretization by Bassi and Rebay \cite{BassiRebay} to fulfill these conditions analogous to the strategy in \cite{Gassner2017}. For our shock capturing, we only add artificial viscosity to the momentum equations. While the smoothing is weaker without direct artificial viscosity in the continuity equation, we trivially maintain conservation of mass and have a straight forward mapping to the entropy variables, which leads to an easy proof of the requirements for the entropy stability via a theorem from \cite{Gassner2017}. In fact, we choose the entropy variables $q_1 = u$ and $q_2=v$ as gradient variables, leading to a simple one-to-one mapping from gradient variables to entropy variables.  We scale the gradient by the water height $h$ and a viscosity parameter $\epsilon$ to obtain the viscous fluxes. The viscosity parameter is dynamically computed for each element and is based on a smoothness measure of the water height within the element. We give details on the computation of the viscosity parameter in \ref{app:ViscPara}. The modified shallow water equations with artificial viscosity are
\begin{equation}
\begin{aligned}\label{sw-visc}
 h_t + (hu)_x + (hv)_y &=  0,\\
(hu)_t + (h\,u^2+g\,h^2/2)_x + (huv)_y &= -g\,h\,b_x+  \nabla\cdot  \left(h\,\epsilon\,\vec{\mathcal{U}}\right),\\
(hv)_t + (huv)_x + (h\,v^2+g\,h^2/2)_y &= -g\,h\,b_y +  \nabla\cdot  \left(h\,\epsilon\,\vec{\mathcal{V}}\right),\\
\vec{\mathcal{U}}& =\nabla u, \\
\vec{\mathcal{V}}& =\nabla v, \\
\end{aligned}
\end{equation}
or in compact flux form
\begin{equation}
\begin{aligned}\label{sw-visc-compact}
 \vec{w}_t + \nabla \cdot (\vec{f},\vec{g})^T &=   \vec{\mathcal{S}} + \nabla \cdot (\vec{f}^v,\vec{g}^v)^T ,\\
\vec{\mathcal{U}}& =\nabla u, \\
\vec{\mathcal{V}}& =\nabla v, \\
\end{aligned}
\end{equation}
with viscous fluxes $\vec{f}^v(\vec{w},\vec{\mathcal{U}}, \vec{\mathcal{V}})=h \, \epsilon \,  (0,\, \mathcal{U}_1,\mathcal{V}_1 )^T $ and $\vec{g}^v(\vec{w},\vec{\mathcal{U}}, \vec{\mathcal{V}})=h \, \epsilon \,  (0,\, {\mathcal{U}}_2,{\mathcal{V}}_2 )^T$.
We include the new viscous fluxes into the flux divergence and get
\begin{equation}
\begin{aligned}\label{sw-visc-refspace}
 \mathcal J \vec{w}_t &= -\hat\nabla \cdot (\vec{\tilde{f}}+\vec{\tilde{f}}^v,\vec{\tilde{g}}+\vec{\tilde{g}}^v)^T +   \vec{\tilde{\mathcal{S}}}  ,\\
\mathcal J\vec{\mathcal{U}}& = \left(
		\begin{array}{cc}
			y_\eta &    -y_\xi   \\
			-x_\eta&  x_\xi    \\
		\end{array}
	\right) \hat\nabla u , \\
	\mathcal J\vec{\mathcal{V}}& = \left(
		\begin{array}{cc}
			y_\eta &    -y_\xi   \\
			-x_\eta&  x_\xi    \\
		\end{array}
	\right) \hat\nabla v ,
\end{aligned}
\end{equation}
where we transform the physical gradient operator with the metrics of the element mappings. 

Analogous steps as in Sec. \ref{Sec:DG} are then used to find the weak and strong form and their discretizations for the continuity and momentum equations.
We multiply the transformed gradient equations by the same test function $\phi$ and integrate over the domain to find the weak form
\begin{equation}\label{Eq:viscousWeakForm}
\begin{aligned}
\int\limits_E \mathcal J \,\vec{\mathcal{U}}\,\phi \dE - \oint\limits_{\partial E} \phi \,  {U}^* \, \vec{n}\dS + \int\limits_E  	u	\left(
		\begin{array}{cc}
			y_\eta &    -y_\xi   \\
			-x_\eta&  x_\xi    \\
		\end{array}
	\right) \, \hat \nabla \phi   \dE &= 0, \\
\int\limits_E \mathcal J \,\vec{\mathcal{V}}\,\phi \dE - \oint\limits_{\partial E} \phi \,  {V}^* \, \vec{n}\dS + \int\limits_E  v		\left(
		\begin{array}{cc}
			y_\eta &    -y_\xi   \\
			-x_\eta&  x_\xi    \\
		\end{array}
	\right) \, \hat \nabla \phi   \dE &= 0, \\
\end{aligned}
\end{equation}
where ${U}^*$ and ${V}^*$ are the numerical interface states for the gradient equations. We discretize the weak form gradient equations \eqref{Eq:viscousWeakForm} and use the Lagrange property to simplify. For example, the volume integral approximations for the $\vec{\mathcal{U}}$ equations are
\begin{equation}\label{Eq:viscousApproximationEq1}
\begin{aligned}
\int\limits_{-1}^{1} \int\limits_{-1}^{1} 
u\, \left( y_\eta \frac{\partial }{\partial \xi} (\ell_i (\xi) \ell_j (\eta) )  -y_\xi \frac{\partial }{\partial \eta} (\ell_i (\xi) \ell_j (\eta) ) \right)\, \mathrm{d}\xi \mathrm{d}\eta
&\approx
-\sum_{m=0}^N   \Dhat_{im} u_{mj} {y_\eta}_{mj}   \, \omega_i\omega_j 
+
\sum_{n=0}^N  \Dhat_{jn} u_{in}  {y_\xi}_{in}   \, \omega_i\omega_j, \\
\int\limits_{-1}^{1} \int\limits_{-1}^{1} 
u\, \left(- x_\eta \frac{\partial }{\partial \xi} (\ell_i (\xi) \ell_j (\eta) )  +x_\xi \frac{\partial }{\partial \eta} (\ell_i (\xi) \ell_j (\eta) ) \right)\, \mathrm{d}\xi \mathrm{d}\eta
&\approx
\sum_{m=0}^N   \Dhat_{im} u_{mj} {x_\eta}_{mj}   \, \omega_i\omega_j 
-
\sum_{n=0}^N  \Dhat_{jn} u_{in}  {x_\xi}_{in}   \, \omega_i\omega_j. \\
\end{aligned}
\end{equation}
Similarly, the surface integrals for $\vec{\mathcal{U}}$ are approximated by
\begin{equation}\label{Eq:viscousApproximationSurface}
\begin{aligned}
\oint\limits_{\partial E} \phi \,  {U}^*\, \vec{n}_1\dS  
\approx&-  \delta_{i0}\, U^*_{i0} {y_\eta}_{i0}\omega_j
+\delta_{iN} \, U^*_{iN} {y_\eta}_{i0}\omega_j
- \delta_{j0}\, U^*_{0j} {y_\xi}_{0j}\omega_i
+ \delta_{jN} \, U^*_{Nj} {y_\xi}_{Nj} \omega_i,
\\
\oint\limits_{\partial E} \phi \,  {U}^*\, \vec{n}_2\dS 
\approx&-  \delta_{i0}\, U^*_{i0} {x_\eta}_{i0}\omega_j
+\delta_{iN} \, U^*_{iN} {x_\eta}_{i0}\omega_j
- \delta_{j0}\, U^*_{0j} {x_\xi}_{0j}\omega_i
+ \delta_{jN} \, U^*_{Nj} {x_\xi}_{Nj} \omega_i.
\end{aligned}
\end{equation}
We summarize the ESDGSEM with artificial viscosity and state the full discretizations of all the new viscous terms and gradient equations in Theorem \ref{thm:artVisc} and proof that the resulting method is entropy stable.
\begin{thm}[Entropy stability of ESDGSEM with artificial viscosity]
\label{thm:artVisc}
The ESDGSEM \eqref{Eq:CurvilinearECDGSEM} with additional viscous terms as in \eqref{sw-visc} is
\begin{equation}\label{Eq:CurvilinearECDGSEMviscous}
J \vec{W}_t + \vec{\mathcal{L}}_\xi+ \vec{\mathcal{L}}_\eta = \vec{\tilde{\mathcal{S}}} + \vec{\mathcal{L}}_\xi^v+ \vec{\mathcal{L}}_\eta^v.
\end{equation}
The viscous terms $\vec{\mathcal{L}}_\xi^v$, $\vec{\mathcal{L}}_\eta^v$ are discretized in strong form by
\begin{equation}
\begin{aligned}
\label{eq:ViscTermsDisc}
 (\vec{\mathcal{L}}_\xi^v)_{ij} =  & 
 \sum\limits_{m=0}^{N} {D}_{i m} (\vec{\tilde{F}}^{v} )_{mj} 
  + \frac{ 1}{\omega_i} \left(\delta_{iN} \vec{\tilde{F}}^{v,*}_{Nj} - \delta_{i0}\vec{\tilde{F}}^{v,*}_{0j}\right)  - \frac{ 1}{\omega_i} \left(\delta_{iN} \vec{\tilde{F}}^{v}_{Nj} - \delta_{i0}\vec{\tilde{F}}^{v}_{0j}\right),   \\
 (\vec{\mathcal{L}}_\eta^v)_{ij} =  & 
 \sum\limits_{m=0}^{N} {D}_{j m} (\vec{\tilde{G}}^{v})_{im} 
+ \frac{1}{\omega_j} \left(\delta_{Nj} \vec{\tilde{G}}^{v,*}_{iN} - \delta_{0j}\vec{\tilde{G}}^{v,*}_{i0} \right)
- \frac{1}{\omega_j} \left(\delta_{Nj} \vec{\tilde{G}}^{v}_{iN} - \delta_{0j}\vec{\tilde{G}}^{v}_{i0} \right).  \\
\end{aligned}
\end{equation}
The curvilinear viscous fluxes are defined with the BR1 approach as
\begin{equation}
\begin{aligned}
\vec{\tilde{F}}^{v}  &=  & y_{\eta} \vec{F}^v - x_{\eta} \vec{G}^v, \\
\vec{\tilde{G}}^{v}  &=  & -y_{\xi} \vec{F}^v + x_{\xi} \vec{G}^v, 
\end{aligned}
\end{equation}
with viscous fluxes 
\begin{equation}
\begin{aligned}
\vec{F}^{v}  &=   h\,\epsilon \, 
\left(\mathcal{U}_1, \,\mathcal{V}_1 \right)^T, \\
\vec{G}^{v}  &=  h \, \epsilon \, 
\left( \mathcal{U}_2 ,\,\mathcal{V}_2 \right)^T, 
\end{aligned}
\end{equation}
and the viscous flux interface coupling is computed with
\begin{equation}
\begin{aligned}
\vec{\tilde{F}}^{v,*} &= \average{\vec{\tilde{F}}^{v}},\\
\vec{\tilde{G}}^{v,*} &= \average{\vec{\tilde{G}}^{v}}.
\end{aligned}
\end{equation}
The gradients $\vec{\mathcal{U}} = \nabla u $ and $\vec{\mathcal{V}} = \nabla v $ are computed by
\begin{equation}\label{Eq:BR1}
\begin{aligned}
 (\mathcal{U}_1)_{ij} &=   (y_{\eta})_{ij}\sum\limits_{m=0}^{N}\Dhat_{i m} u_{mj} - (y_{\xi})_{ij}\sum\limits_{m=0}^{N}\Dhat_{j m} u_{im} 
+ \frac{ (y_{\eta})_{ij}}{\omega_i} \left(\delta_{iN} {U}^{*}_{Nj} - \delta_{i0}{U}^{*}_{0j}\right) 
+ \frac{ (y_{\xi})_{ij}}{\omega_j} \left(\delta_{Nj} {U}^{*}_{iN} - \delta_{0j}{U}^{*}_{i0}\right)   \\
 (\mathcal{U}_2)_{ij} &=    - (x_{\eta})_{ij}\sum\limits_{m=0}^{N}\Dhat_{i m} u_{mj} + (x_{\xi})_{ij}\sum\limits_{m=0}^{N}\Dhat_{j m} u_{im}
+ \frac{ (x_{\eta})_{ij}}{\omega_i} \left(\delta_{iN} {U}^{*}_{Nj} - \delta_{i0}{U}^{*}_{0j}\right) 
+ \frac{(x_{\xi})_{ij}}{\omega_j} \left(\delta_{Nj} {U}^{*}_{iN} - \delta_{0j}{U}^{*}_{i0}\right)    \\
 (\mathcal{V}_1)_{ij} &=   (y_{\eta})_{ij}\sum\limits_{m=0}^{N}\Dhat_{i m} v_{mj} - (y_{\xi})_{ij}\sum\limits_{m=0}^{N}\Dhat_{j m} v_{im} 
+ \frac{ (y_{\eta})_{ij}}{\omega_i} \left(\delta_{iN} {V}^{*}_{Nj} - \delta_{i0}{V}^{*}_{0j}\right) 
+ \frac{ (y_{\xi})_{ij}}{\omega_j} \left(\delta_{Nj} {V}^{*}_{iN} - \delta_{0j}{V}^{*}_{i0}\right)   \\
 (\mathcal{V}_2)_{ij} &=    - (x_{\eta})_{ij}\sum\limits_{m=0}^{N}\Dhat_{i m} v_{mj} + (x_{\xi})_{ij}\sum\limits_{m=0}^{N}\Dhat_{j m} v_{im}
+ \frac{ (x_{\eta})_{ij}}{\omega_i} \left(\delta_{iN} {V}^{*}_{Nj} - \delta_{i0}{V}^{*}_{0j}\right) 
+ \frac{(x_{\xi})_{ij}}{\omega_j} \left(\delta_{Nj} {V}^{*}_{iN} - \delta_{0j}{V}^{*}_{i0}\right),    \\
\end{aligned}
\end{equation}
where the numerical states ${U}^{*}$ and ${V}^{*}$ are chosen according to BR1 as the average values
\begin{equation}
\begin{aligned}
{U}^{*} &= \average{u}, \\
{V}^{*} &= \average{v}.
\end{aligned}
\end{equation}
The ESDGSEM with artificial viscosity \eqref{Eq:CurvilinearECDGSEMviscous} and viscous terms discretized as in \eqref{eq:ViscTermsDisc} and \eqref{Eq:BR1}, is entropy stable.
\end{thm}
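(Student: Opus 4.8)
The plan is to recycle the entropy balance already established for the inviscid ESDGSEM in Lemma \ref{Lem:ESDGSEM} (in particular property \ref{Thm2parttwo}) and to show that the newly added viscous blocks contribute only a non-positive amount to the discrete entropy rate, exactly along the lines of the abstract parabolic-stability argument of \cite{Gassner2017}. Concretely, I would contract the full discretization \eqref{Eq:CurvilinearECDGSEMviscous} from the left, node by node, with $\statevec{Q}_{ij}$, the entropy variables \eqref{eq:entropyVariables} evaluated at $(\xi_i,\eta_j)$, multiply by the quadrature weights $\omega_i\omega_j$, and sum over $i,j$ (the Jacobian is already carried inside \eqref{Eq:CurvilinearECDGSEMviscous}). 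By Lemma \ref{Lem:ESDGSEM}, the contribution of $\vec{\mathcal{L}}_\xi+\vec{\mathcal{L}}_\eta-\vec{\tilde{\mathcal{S}}}$ together with the interface dissipation already produces a discrete entropy statement of the desired dissipative form, so it suffices to bound the extra term $\sum_{i,j}\omega_i\omega_j\,\statevec{Q}_{ij}^{T}\big[(\vec{\mathcal{L}}_\xi^{v})_{ij}+(\vec{\mathcal{L}}_\eta^{v})_{ij}\big]$.

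Because the viscous fluxes $\vec{F}^{v},\vec{G}^{v}$ have vanishing first (continuity) component, only the second and third entropy variables $Q_2=u$ and $Q_3=v$ enter this sum; this is precisely why the gradient variables were taken to be $u$ and $v$, making the map from gradient variables to entropy variables the identity on the relevant block and hence trivially symmetric positive definite. Next I would apply the SBP property \eqref{SBP_def} in the form $\mmat\dmat+(\mmat\dmat)^{T}=\mathrm{diag}(-1,0,\dots,0,1)$ to the volume derivative sums $\sum_m D_{im}(\vec{\tilde F}^{v})_{mj}$ and $\sum_m D_{jm}(\vec{\tilde G}^{v})_{im}$ appearing in \eqref{eq:ViscTermsDisc}. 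This transfers the derivative off the viscous flux and onto $u$ and $v$, generating exactly the metric-weighted discrete derivatives present in \eqref{Eq:BR1}, while the surface corrections produced by SBP combine with the boundary penalty terms in the BR1 reconstruction \eqref{Eq:BR1} (which use $U^{*}=\average{u}$ and $V^{*}=\average{v}$) to reassemble the discrete physical gradients $\vec{\mathcal{U}}_{ij}$ and $\vec{\mathcal{V}}_{ij}$. Here one uses that the discrete metric identities hold exactly in two dimensions for the transfinite mapping, so that the curvilinear contravariant divergence operator is, under the LGL inner product, the negative transpose of the covariant gradient operator. The upshot is that the volume part of the extra term collapses to $-\sum_{i,j}\omega_i\omega_j\,\mathcal{J}_{ij}\,h_{ij}\,\epsilon_{ij}\big(|\vec{\mathcal{U}}_{ij}|^{2}+|\vec{\mathcal{V}}_{ij}|^{2}\big)$, which is non-positive since $\epsilon\ge 0$ by construction (see \ref{app:ViscPara}) and the water height is non-negative.

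What remains is the interface bookkeeping: the $\vec{\tilde F}^{v,*}-\vec{\tilde F}^{v}$ and $\vec{\tilde G}^{v,*}-\vec{\tilde G}^{v}$ jump terms in \eqref{eq:ViscTermsDisc} paired with $Q_2,Q_3$, together with the $U^{*},V^{*}$ surface contributions hidden inside $\vec{\mathcal{U}},\vec{\mathcal{V}}$. Summing over all elements, each interior face is visited twice with opposite normals; because the BR1 choices $\vec{\tilde F}^{v,*}=\average{\vec{\tilde F}^{v}}$, $\vec{\tilde G}^{v,*}=\average{\vec{\tilde G}^{v}}$, $U^{*}=\average{u}$, $V^{*}=\average{v}$ are central, the cross contributions from the two sides of a face cancel identically -- the classical fact that BR1 injects no additional interface dissipation -- so the viscous interface terms sum to zero, with physical boundary faces controlled by the imposed boundary data as in \cite{ESDGSEM2D_paper}. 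Collecting everything, the total discrete entropy rate equals the inviscid one of Lemma \ref{Lem:ESDGSEM} minus the non-negative volume dissipation above, proving that \eqref{Eq:CurvilinearECDGSEMviscous} is entropy stable. The step I expect to be the main obstacle is the curvilinear SBP-by-parts manipulation: one must verify that the metric terms, surface matrices, and BR1 penalties reassemble precisely into the gradients of \eqref{Eq:BR1} with no spurious leftover, which is exactly where the discrete metric identities and the central BR1 coupling are indispensable; once that identity is in hand the sign conclusion is immediate. Equivalently -- and this is the shortest route -- one can simply check that the discretization \eqref{eq:ViscTermsDisc}--\eqref{Eq:BR1} satisfies the hypotheses of the general entropy-stability theorem for parabolic terms in \cite{Gassner2017}, namely gradient variables forming a subset of the entropy variables, a symmetric positive-semidefinite viscosity coefficient $h\epsilon\,\mat{I}$, and BR1 central numerical fluxes, and invoke that theorem directly.
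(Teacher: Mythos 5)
Your proposal is correct, and its closing remark is in fact precisely the paper's proof: the paper does not carry out the element-wise entropy contraction at all, it simply verifies the hypotheses of the general parabolic entropy-stability theorem of \cite{Gassner2017} --- namely that the viscous fluxes can be written as $\mathcal{B}^\epsilon\nabla\vec q$ with blockwise symmetric $\mathcal{B}^\epsilon$, and that the quadratic form \eqref{PSD_req} evaluates to $\epsilon h\left(u_x^2+v_x^2+u_y^2+v_y^2\right)\geq 0$ for the gradient-variable choice $u,v$ --- and then invokes that theorem for the BR1 discretization \eqref{eq:ViscTermsDisc}--\eqref{Eq:BR1}. The bulk of your argument instead re-derives, for this special case, what the cited theorem already contains: the SBP summation-by-parts step that moves the curvilinear derivative off $\vec{\tilde F}^v,\vec{\tilde G}^v$ and reassembles the discrete gradients of \eqref{Eq:BR1} (this works because the gradient equations are built with the weak operator $\dmathat$ and the metric terms are collocated at the same nodes), the collapse of the volume contribution to $-\sum_{i,j}\omega_i\omega_j\,\mathcal{J}_{ij}\,h_{ij}\,\epsilon_{ij}\bigl(|\vec{\mathcal{U}}_{ij}|^2+|\vec{\mathcal{V}}_{ij}|^2\bigr)\leq 0$, and the interface cancellation of the central BR1 couplings via the identity $\jump{ab}=\average{a}\jump{b}+\jump{a}\average{b}$. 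That direct route is sound (one minor imprecision: the relevant block matrix is only positive \emph{semi}-definite, as required by \eqref{PSD_req}, not definite) and has the pedagogical benefit of exhibiting the explicit dissipation and the BR1 neutrality at interfaces, but it duplicates the work delegated to \cite{Gassner2017} and carries the burden of the curvilinear bookkeeping (discrete metric identities, matching of $\dmathat$ in both primal and gradient discretizations) that the paper avoids entirely by checking the algebraic conditions on $\mathcal{B}^\epsilon$ and citing the reference.
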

\begin{proof}
In \cite{Gassner2017} the authors show that viscous terms discretized by Bassi and Rebay \cite{BassiRebay} are entropy stable if the viscous fluxes can be rewritten as the product of a symmetric, positive definite block matrix $\mathcal{B}^\epsilon$ and the gradient of the entropy variables
\begin{equation}
    \tensor{f^v} \left( \vec{w}, \,  \nabla \vec{w} \right) = \mathcal{B}^\epsilon  \nabla \vec{q} ,
\end{equation}
with state vectors $\tensor{f^v} = \left(
\begin{array}{c}
\vec{f}^v\\
\vec{g}^v\\
\end{array}
\right)
 \in \mathbb{R}^{6} $  
 and  
 $\nabla \vec{q} = \left(
\begin{array}{c}
\vec{q}_x\\
\vec{q}_y\\
\end{array}
\right)
 \in \mathbb{R}^{6} $ .
In the case of the shallow water equations, the block matrix $\mathcal{B}^\epsilon$ can be expressed as
\begin{equation}
\mathcal{B}^\epsilon = \left(
\begin{array}{cc}
\mat{B}_{11} & \mat{B}_{12} \\
\mat{B}_{21} & \mat{B}_{22} \\
\end{array}
\right)\in \mathbb{R}^{6\times6}.
\end{equation}
The entropy stability requirements for the block matrix $\mathcal{B}^\epsilon$ are then that each block $\mat{ B}_{ij}$ is symmetric
\begin{equation}
    \mat{ B}_{ij}^\epsilon = \left(\mat{ B}_{ji}^\epsilon\right)^T,
\end{equation}
and positive (semi-)definite
\begin{equation}
\label{PSD_req}
  \sum_{i=1}^{d}\sum_{j=1}^{d} \pderivative{\vec{q}}{x_i}^T \mat{ B}_{ij}^\epsilon \pderivative{\vec{q}}{x_j}   \geq 0 ,\quad\quad  \forall \vec{q}.
\end{equation}
The choice of gradient variables and fluxes in \eqref{sw-visc} make the block matrix very simple in this case
\begin{equation}
  \mathcal{B}^\epsilon = \epsilon h \left(
\begin{array}{cccccc}
0 & 0 & 0 & 0 & 0 & 0 \\
0 & 1 & 0 & 0 & 0 & 0 \\
0 & 0 & 1& 0 & 0 & 0 \\
0 & 0 & 0 & 0 & 0 & 0 \\
0 & 0 & 0 & 0 & 1 & 0 \\
0 & 0 & 0 & 0 & 0 & 1 \\
\end{array}
\right)  .
\end{equation}
We check the requirement \eqref{PSD_req} to find
\begin{equation}
\begin{aligned}
  \sum_{i=1}^{2}\sum_{j=1}^{2} \pderivative{\vec{q}}{x_i}^T \mat{ B}_{ij}^\epsilon \pderivative{\vec{q}}{x_j}   
  &= (\vec{q}_x)^T \, \mat{B}_{11}^\epsilon \, \vec{q}_x 
    + (\vec{q}_y)^T \, \mat{B}_{22}^\epsilon \, \vec{q}_y \\
    &= \epsilon \, h  \left(u_x^2 + v_x^2 + u_y^2 + v_y^2\right) \geq 0.
\end{aligned}
\end{equation}
\end{proof}
The artificial viscosity reduces the amount and magnitude of oscillations as shown in the numerical results section, specifically in a dam break example in Section \ref{subsec:WetParaDam}. 

\section{Positivity Preserving Limiter}
\label{sec:wetDry}
The artificial viscosity from Sec. \ref{Sec:artVisc} greatly reduces the oscillations, but in cases with wet/dry regions, this is sometimes not enough as even small oscillations may render a numerical scheme unstable if they lead to negative water heights. A mechanism is needed, that strictly enforces the positivity of the water height $h$ without destroying accuracy, conservation, well-balancedness or entropy-stability of the ESDGSEM. In, e.g., \cite{perthame1996positivity,zhang2010positivity,xing2010positivity,xing2013positivity} the authors have developed a positivity preserving limiter and applied it to the shallow water equation. The limiter is based on a linear scaling around element averages and thus relies on non-negative average water heights in all elements. It is then proved that non-negative average water heights are guaranteed to be preserved for an Euler time step. The proof relies on the specific choice of the numerical flux. Numerical fluxes that preserve non-negative water heights for finite volume schemes are called positivity preserving \cite{zhang2010positivity}. For Cartesian meshes, this property directly translates to DG methods by way of selecting any positivity preserving numerical interface fluxes, e.g. \cite{xing2010positivity}. In \cite{zhang2010positivity} it is proven that the Lax-Friedrichs numerical flux \cite{perthame1996positivity} is positivity preserving and the same is noted for the Godunov flux \cite{einfeldt1991godunov}, Boltzmann type flux \cite{perthame1992second} and the Harten-Lax-van-Leer flux \cite{harten1983upstream}. The positivity preserving property is shown for Euler time integration but naturally extends to SSPRK methods, e.g. \eqref{RKSSP}, as these are convex combinations of Euler time steps, see \cite{shu1988efficient} for details.

In this section we prove that the entropy stable numerical flux of the ESDGSEM \eqref{NumericalSurfaceFlux} is positivity preserving in the sense that non-negative mean water heights are preserved for one Euler time step. On Cartesian meshes it is possible to prove this similarly to \cite{zhang2010positivity}, where the update of the mean water height is written in finite volume form. Due to generally different Jacobians of the curvilinear mappings and possible changing normals on opposing sides this is not as straightforward on curved meshes. We directly prove the positivity preservation for curved quadrilateral meshes in Lemma \ref{Lem:PosPres}. Also, we verify that the positivity preserving limiter is entropy stable in Lemma \ref{Lem:ESPP}. Both results are then summarized in Theorem \ref{thm:PosPresESDGSEM}.

We will now show that the ESDGSEM with Euler time integration preserves a non-negative water height for a sufficiently small time step. For notational convenience we use the notation $W_{j,s}$ (opposed to $W_{ij}$ for internal nodal values) to denote the value of $W$ at node $j$ on interface $s$, where $s\in\{1,\ldots,4\}$ denotes the element local interface number. We also introduce the surface Jacobian $\mathcal J^{\text{surf}}$ on $\xi=\pm 1$ ($s=2,4)$ and $\eta=\pm 1$ ($s=1,3$) interfaces by
\begin{equation}
\label{eq:SurfJac}
    \begin{aligned}
     \mathcal J^{\text{surf}} &:= \sqrt{y_\xi y_\xi + x_\xi x_\xi} , \quad\quad \text{for $\eta=\pm 1$ },\\
     \mathcal J^{\text{surf}} &:= \sqrt{y_\eta y_\eta + x_\eta x_\eta}, \quad\quad \text{for $\xi=\pm 1$ }.\\
    \end{aligned}
\end{equation}
\begin{Lem}[Preservation of non-negative mean water heights in ESDGSEM]\label{Lem:PosPres}
If the water height, $h$, is non-negative for all LGL nodes then the average water height in the next time step is non-negative for all elements under the additional time step restrictions
\begin{equation}
\begin{aligned}
\label{eq:CFLConstraints}
    \Delta t &\leq  \frac{\omega_0\, a_{j,s}}{\left(A_{j,s} + 2 \average{\tilde{u}}_{j,s}\right)}, \\
    \Delta t &\leq \Bigg|\frac{ \omega_0\, a_{j,s}\,g \, h_{j,s}}{\average{c}_{j,s}B_{j,s} \jump{\tilde{u}}_{j,s}}\Bigg|,  \quad\quad \text{only if $h_{j,s}>0$},
\end{aligned}
\end{equation}
where we introduce the rotated normal velocity
\begin{equation}
    \tilde{u} := n_x u + n_y v
\end{equation}
for all edge nodes $j=0,\ldots,N$ on all element sides $s=1,\ldots, 4$, and 
\begin{equation}
\begin{aligned}
A &:= \big|\average{\tilde u}+\average{c}\big| + \big|\average{\tilde u}-\average{c}\big|,\\
B &:= \big|\average{\tilde u}+\average{c}\big| - \big|\average{\tilde u}-\average{c}\big|.
\end{aligned}
\end{equation}
For the Legendre Gauss Lobatto nodes, the quadrature weight $\omega_0$ is given by $\omega_0=\half N(N-1)$. We also have geometric scaling factors on the interfaces given by $a_{j,s} := \frac{\mathcal J_{j,s}}{\mathcal J^{\text{surf}}_{j,s}}$ with volume and surface Jacobians defined in \eqref{JacobianDef} and \eqref{eq:SurfJac}.
\end{Lem}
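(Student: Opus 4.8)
The approach follows the Zhang--Shu/Xing strategy of monitoring the evolution of the element mean water height, adapted to the curvilinear split-form ESDGSEM. Take one forward Euler step of \eqref{Eq:CurvilinearECDGSEMviscous}; the artificial viscosity enters only the momentum equations, so it is irrelevant for the continuity component. Contracting the first (mass) component of the update against the constant test function amounts to summing $\omega_i\omega_j\mathcal{J}_{ij}$ times that component over all LGL nodes of an element. The first step is to show that the split-form volume contribution $\sum_{i,m,j}\omega_i\omega_j\tilde D_{im}(\vec{\tilde F}_{(i,m),j})_1$ together with its $\eta$ analogue vanishes identically. Using $\Dmod=2\dmat+\smat$ and the SBP property \eqref{SBP_def} one checks that $\mmat\Dmod$ is skew-symmetric, while the first component of the two-point volume flux, $(\vec{\tilde F}_{(i,m),j})_1=\average{hu}_{(i,m),j}\average{y_\eta}_{(i,m),j}-\average{hv}_{(i,m),j}\average{x_\eta}_{(i,m),j}$, is symmetric under $i\leftrightarrow m$; contracting a skew array with a symmetric one gives zero. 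This is precisely the discrete mass conservation of Lemma~\ref{Lem:ESDGSEM}\,(\ref{Thm2partone}). What survives is a telescoping sum of the interface mass fluxes, so after dividing by the positive total quadrature volume $\sum_{ij}\omega_i\omega_j\mathcal{J}_{ij}$ the updated mean $\overline{h}^{\,n+1}$ equals $\overline{h}^{\,n}$ minus $\Delta t$ times a weighted sum, over the four element sides $s$ and the $N+1$ LGL edge nodes $j$, of $\omega_j(\tilde F^{*,es})_1$, the contravariant outward normal numerical mass flux.

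Next I would make this interface mass flux explicit. From \eqref{eq:surfaceFluxes} the first components of the entropy conserving parts are $\average{h}\average{u}$ and $\average{h}\average{v}$, so using \eqref{eq:fesandges} and the definition \eqref{NumericalSurfaceFlux} of the contravariant flux, $(\tilde F^{*,es})_1$ at an edge node equals $\mathcal{J}^{\text{surf}}\big(\average{h}\,\average{\tilde u}-\tfrac12\,(\text{diss})_1\big)$, with the rotated normal velocity $\tilde u=n_x u+n_y v$ and $(\text{diss})_1$ the first component of $\mat{R}\,|\mat{\Lambda}|\,\mat{R}^T\jump{\statevec{q}}$. The key algebraic computation is to evaluate $(\text{diss})_1$ in the rotated frame: using the first rows of $\mat{R}_f,\mat{R}_g$, the entries of $|\mat{\Lambda}_f|,|\mat{\Lambda}_g|$, and re-expressing the jumps in entropy variables through jumps in $h$ and $\tilde u$, this component collapses to a constant multiple of $A_{j,s}\jump{h}+\tfrac{\average{c}}{g}B_{j,s}\jump{\tilde u}$, with $A,B$ exactly as defined in the statement. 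Substituting $\average{h}=\tfrac12(h^++h^-)$, $\jump{h}=h^+-h^-$ and collecting terms, the normal mass flux takes the form $\mathcal{J}^{\text{surf}}\big[h^-\alpha+h^+\beta-\gamma\,B_{j,s}\average{c}\jump{\tilde u}\big]$, where $\alpha\propto A_{j,s}+2\average{\tilde u}\ge 0$ (since $A\ge 2|\average{\tilde u}|$) and $\beta\propto 2\average{\tilde u}-A_{j,s}\le 0$.

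The third step is the mean decomposition and the per-node estimate. Since all $h^n_{ij}\ge 0$, I split $\sum_{ij}\omega_i\omega_j\mathcal{J}_{ij}h^n_{ij}$ into the strictly interior nodes, whose contribution is non-negative and may be discarded, and the boundary LGL nodes, with care that a corner node is shared by two adjacent sides. A boundary node $j$ on side $s$ carries weight $\omega_0\omega_j$ in the volume sum but is charged $\omega_j\mathcal{J}^{\text{surf}}_{j,s}$ times the normal mass flux; the ratio of these weights is $1/(\omega_0 a_{j,s})$ with $a_{j,s}=\mathcal{J}_{j,s}/\mathcal{J}^{\text{surf}}_{j,s}$. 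It therefore suffices to show, node by node, that $\omega_0 a_{j,s}h^n_{j,s}-\Delta t\big[h^-\alpha+h^+\beta-\gamma B_{j,s}\average{c}\jump{\tilde u}\big]\ge 0$, where $h^-=h^n_{j,s}$ is the value from inside the element and $h^+$ the neighbouring one (possibly after relabelling the $\pm$ sides to match the outward orientation of side $s$). Because $\beta\le 0$ and $h^+\ge 0$ the $h^+$-term is dropped; the budget $\omega_0 a_{j,s}h^n_{j,s}$ is then split into one part absorbing $\Delta t\,h^-\alpha$, giving the first restriction in \eqref{eq:CFLConstraints}, and one part absorbing $\Delta t\,\gamma|B_{j,s}\average{c}\jump{\tilde u}|$, which, requiring division by $h^n_{j,s}$, gives the second restriction and is needed only when $h_{j,s}>0$. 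When $h_{j,s}=0$ (so $c^-=0$) a short case distinction on $\operatorname{sign}(\tilde u^+)$ and on whether the interface is sub- or supercritical shows the normal mass flux is automatically $\le 0$, so no restriction is needed there. Summing the non-negative per-node contributions and the discarded non-negative interior part gives $\overline{h}^{\,n+1}\ge 0$; the extension to the three-stage SSPRK \eqref{RKSSP} is immediate, since each stage is a convex combination of forward Euler steps.

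I expect the main obstacle to be the explicit evaluation of $(\text{diss})_1=[\mat{R}|\mat{\Lambda}|\mat{R}^T\jump{\statevec{q}}]_1$ in rotated coordinates and verifying that it reorganizes precisely into the $A\jump{h}+\tfrac{\average{c}}{g}B\jump{\tilde u}$ form with the stated signs of the $h^\pm$ coefficients; a secondary but genuine difficulty is the curvilinear bookkeeping, namely correctly disentangling the volume Jacobian, the surface Jacobian $\mathcal{J}^{\text{surf}}$ and the transverse boundary weight $\omega_0$, and accounting for the fact that opposite sides of a curved element may carry different normals and Jacobians, which is exactly what prevents a direct appeal to the Cartesian finite-volume argument.
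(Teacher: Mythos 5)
Your proposal is correct and takes essentially the same route as the paper: conservation reduces the Euler update of the element mean to the boundary sums of the rotated entropy stable mass flux $F_1^{*,es}=\average{h}\average{\tilde u}-\tfrac{1}{4g}\left(gA\jump{h+b}+\average{c}B\jump{\tilde u}\right)$, the LGL quadrature mass is split between interior and boundary nodes, and the per-node estimates (using $A\ge 2\big|\average{\tilde u}\big|$, the matching signs of $B$ and $\tilde u^{+}$ when $h^{-}=0$, and SSPRK convexity) yield exactly the two stated restrictions. In writing it out you only need to add the paper's explicit in-proof assumption that $b$ is continuous across interfaces (the dissipation carries $\jump{h+b}$, not $\jump{h}$) and to assign only \emph{half} of each boundary node's quadrature mass to each adjacent side, which both resolves the corner-node sharing you flagged and is precisely what produces the constants in \eqref{eq:CFLConstraints}.
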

\begin{proof}
We first examine the numerical flux. Since the shallow water equations are rotationally invariant, we can calculate the numerical flux in normal direction and then rotate back. We want to guarantee positive water height and, thus, proceed to examine the numerical flux contributions for the water height equation. The first entry of the entropy stable numerical flux in the normal direction can be simplified to
\begin{equation}
\begin{aligned}
\label{RotatedfES_FirstEntry}
F_1^{*,es} 
&=\average{h}\average{\tilde{u}}- \frac{1}{4g} \left( A \jump{gh +gb } +   \average{c}B\jump{\tilde{u}}\right),
\end{aligned}
\end{equation}
where $\tilde{u} = n_x u + n_y v$ and full details on the derivation of \eqref{RotatedfES_FirstEntry} are given in \ref{app:SimpleNumFlux}.
As the ESDGSEM is a conservative numerical scheme, we can write the update of the element average water height in one Euler time step as
\begin{equation}\label{eq:CellAvgScheme}
\begin{aligned}
\overline{h}^{t_{n+1}} 
&= \overline{h}^{t_{n}}
-\frac{\Delta t}{\big|E\big|}\sum_{s=1}^{4}\sum_{j=0}^N \omega_j \mathcal J^{\text{surf}}_{j,s} \tilde{F}_1^{*,es}\left(W^{\text{int}}_{j,s},W^{\text{ext}}_{j,s}, n_{j,s}\right)   \\
&= \overline{h}^{t_{n}}
-\frac{\Delta t}{\big|E\big|}\sum_{s=1}^{4} \sum_{j=0}^N  \omega_j   \mathcal J^{\text{surf}}_{j,s} F_1^{*,es}\left(\tilde W^{\text{int}}_{j,s},\tilde W^{\text{ext}}_{j,s}\right)  , \\
\end{aligned}
\end{equation}
where $\mathcal J^{\text{surf}}_{j,s}$ is the surface Jacobian at node $j$ on interface $s$ defined in \eqref{eq:SurfJac}. 
We can also write the average water height $\overline{h}^{t_{n}}$ as 
\begin{equation}\label{eq:AverageTerm}
\begin{aligned}
\overline{h}^{t_{n}} = \frac{1}{\big|E\big|}\sum_{j=0}^N \sum_{i=0}^N h_{ij} \mathcal J_{ij} \omega_i\omega_j & \\
=\frac{1}{\big|E\big|}\sum_{i=1}^{N-1} \sum_{j=1}^{N-1}  h_{ij} \mathcal J_{ij} \omega_i\omega_j 
& +\half \frac{1}{\big|E\big|}\sum_{s=1}^4  \sum_{j=1}^{N-1}  h_{j,s} \mathcal J_{j,s} \omega_0\omega_j
+\half \frac{1}{\big|E\big|}\sum_{s=1}^4  \sum_{j=0}^{N}  h_{j,s} \mathcal J_{j,s} \omega_0\omega_j
\\
=\frac{1}{\big|E\big|}\sum_{i=1}^{N-1} \sum_{j=1}^{N-1}  h_{ij} \mathcal J_{ij} \omega_i\omega_j 
& +\half \frac{1}{\big|E\big|}\sum_{s=1}^4  \sum_{j=1}^{N-1}  h_{j,s} \mathcal J_{j,s} \omega_0\omega_j
\\
+\frac{1}{2\big|E\big|} \sum_{j=0}^{N}   h_{j,3} \mathcal J_{j,3} \omega_0\omega_j
&+ \frac{1}{2\big|E\big|}\sum_{j=0}^{N}  h_{j,1} \mathcal J_{j,1} \omega_0\omega_j \\
+ \frac{1}{2\big|E\big|}\sum_{i=0}^{N}  h_{j,4} \mathcal J_{j,4} \omega_0\omega_j
&+ \frac{1}{2\big|E\big|}\sum_{i=0}^{N}  h_{j,2} \mathcal J_{j,2} \omega_0\omega_j .
\end{aligned}
\end{equation}
Inserting the new expression for the average water height \eqref{eq:AverageTerm} into the update scheme \eqref{eq:CellAvgScheme} we find
\begin{equation}\label{eq:CellAvgScheme2}
\begin{aligned}
\overline{h}^{t_{n+1}} 
&= \frac{1}{\big|E\big|}\sum_{i=1}^{N-1} \sum_{j=1}^{N-1}  h_{ij} \mathcal J_{ij} \omega_i\omega_j 
 +\half \frac{1}{\big|E\big|}\sum_{s=1}^4  \sum_{j=1}^{N-1}  h_{j,s} \mathcal J_{j,s} \omega_0\omega_j
\\
& + \frac{1}{\big|E\big|}\sum_{j=0}^{N} \mathcal J_{j,1} \omega_0\omega_j \left[ \half h_{j,1}  
- \frac{\Delta t}{\omega_0 \, a_{j,1}} F_1^{*,es}\left(\tilde W^{\text{int}}_{j,1},\tilde W^{\text{ext}}_{j,1}\right)\right]  
\\
& +\frac{1}{\big|E\big|} \sum_{j=0}^{N} \mathcal J_{j,3} \omega_0\omega_j \left[ \half h_{j,3}  
- \frac{\Delta t}{\omega_0  \, a_{j,3}} F_1^{*,es}\left(\tilde W^{\text{int}}_{j,3},\tilde W^{\text{ext}}_{j,3}\right) 
\right]
\\
&+ \frac{1}{\big|E\big|}\sum_{j=0}^{N}  \mathcal J_{j,2} \omega_0\omega_j \left[\half h_{j,2} 
-\frac{\Delta t}{\omega_0 \, a_{j,2}} F_1^{*,es}\left(\tilde W^{\text{int}}_{j,2},\tilde W^{\text{ext}}_{j,2}\right) \right]
\\
&+ \frac{1}{\big|E\big|}\sum_{j=0}^{N}  \mathcal J_{j,4} \omega_0\omega_j \left[\half h_{j,4} 
- \frac{\Delta t}{\omega_0 \, a_{j,4}} 
F_1^{*,es}\left( \tilde W^{\text{int}}_{j,4},\tilde W^{\text{ext}}_{j,4}\right) 
\right],
\end{aligned}
\end{equation}
with $a_{j,s} := \frac{\mathcal J_{j,s}}{\mathcal J^{\text{surf}}_{j,s}}$. We note that for the special case of uniform Cartesian meshes this factor is simply $a_{j,s} = \Delta y$ for $s=1,3$ and $a_{j,s} = \Delta x$ for $s=2,4$.
The first two sums are clearly non-negative for meshes with positive Jacobians. We proceed to examine the interface terms
\begin{equation}
 \half h_{j,s}  
- \frac{\Delta t}{\omega_0 \, a_{j,s}}  F_1^{*,es}\left(\tilde W^{\text{int}}_{j,s},\tilde W^{\text{ext}}_{j,s}\right), \quad\quad s=1,\ldots,4 \quad .
\end{equation}
We can do this for an arbitrary node $j$, side $s$, and only need to distinguish between internal values $W^-$ and external values $W^+$ on the interface. We thus omit the indices $j$ and $s$ in the following steps.
By inserting the compact expression for $F_1^{*,es}$ from \eqref{RotatedfES_FirstEntry} and assuming continuous bottom topographies across element interfaces we find
\begin{equation}
\label{eq:UpdateConditions}
\begin{aligned}
 &\half h^- - \frac{1}{4g} \frac{\Delta t}{\omega_0\, a} \left( 4g\average{h}\average{\tilde{u}}-  g A \jump{h } -  \average{c}B \jump{\tilde{u}}\right) \\
&=\half  h^- - \frac{1}{4g} \frac{\Delta t}{\omega_0\, a} \left( 
g h^+ \left( 2\average{\tilde{u}} -  A\right) +gh^-\left(2\average{\tilde{u}}+  A\right) -  \average{c}B \jump{\tilde{u}}\right) \\
&=\frac{1}{4} \frac{\Delta t}{\omega_0\, a} h^+\left(A - 2 \average{\tilde{u}}\right) + \fourth h^- \left(1 - \frac{\Delta t}{\omega_0\, a} \left(A + 2 \average{\tilde{u}}\right)\right) + \fourth \left(  h^- + \frac{\Delta t}{ g \, \omega_0\, a}   \average{c}B \jump{\tilde{u}}\right)
\end{aligned}
\end{equation}
We examine the three terms in \eqref{eq:UpdateConditions} individually. The first term is always non-negative, since
\begin{equation}
  A =   \big|\average{\tilde{u}}+\average{c}\big| + \big|\average{\tilde{u}}-\average{c}\big| \geq  2\big|\average{\tilde{u}}\big|.
\end{equation}
For the second term we require an additional time step condition. From
\begin{equation}
    \left(1 - \frac{\Delta t}{\omega_0\, a}  \left(A + 2 \average{\tilde{u}}\right)\right) \mbgeq 0,
\end{equation}
we find
\begin{equation}
    \Delta t \mbleq  \frac{\omega_0\, a}{\left(A + 2 \average{\tilde{u}}\right)}.
\end{equation}
For the third term, we cannot generally factor out $h^-$, so we need to treat this carefully. We need to show that the last term is not negative in the cases $h^-=0$ and $h^+=0$ as well as in the wet case where both water heights are positive. In the case $h^-=0$, we also have $\tilde{u}^-=0$ and thus we see $ \jump{\tilde{u}} = \tilde{u}^+$, and the whole term becomes
\begin{equation}
  \frac{\Delta t}{g \, \omega_0\, a}  \average{c}B \tilde{u}^+ .
\end{equation}
We note that $\average{c}$ is always non-negative, so we must examine the signs of B and $\tilde{u}^+$. The velocity can have an arbitrary sign but from the definition of $B$
\begin{equation}
\begin{aligned}
B &= \big|\average{\tilde{u}}+\average{c}\big| - \big|\average{\tilde{u}}-\average{c}\big| = \bigg|\half \tilde{u}^+ +\average{c}\bigg| - \bigg|\half \tilde{u}^+-\average{c}\bigg|, \\ 
\end{aligned}
\end{equation}
we see that the sign of $B$ matches the sign of $\tilde{u}^+$ and thus the whole term is guaranteed non-negative for $h^-=0$ and $h^+\geq 0$. If $h^->0$ (and thus in general $\tilde{u}^-\neq0$), we are allowed to factor out $ h^-$. Then we require
\begin{equation}
\frac{\Delta t}{g\,\omega_0\, a} \frac{ \average{c}B \jump{\tilde{u}}}{h^-} \mbgeq -1 .
\end{equation}
This condition guarantees non negativity for the third term in \eqref{eq:UpdateConditions} and can only be violated if $B \jump{\tilde{u}}<0$. There are two sets of conditions where this is the case. Either we have $\tilde{u}^- > \tilde{u}^+$ and $\average{\tilde{u}}>0$, which implies $\tilde{u}^->0$. Or, alternatively, we have $B<0$ and $\jump{\tilde{u}}>0$, which implies $0>\tilde{u}^+ > \tilde{u}^-$.
In either way, for $B \jump{\tilde{u}}<0$, we can guarantee non negativity by enforcing the additional time step restriction
\begin{equation}
 \Delta t \mbleq \Bigg|\frac{g\,\omega_0\, a\, h^-}{\average{c}B \jump{\tilde{u}}}\Bigg|  .
\end{equation}
This proof for one Euler time step extends to SSPRK methods as used in the ESDGSEM as seen in, e.g., \cite{xing2010positivity}.
\end{proof}
While Lemma \ref{Lem:PosPres} guarantees a non-negative average water height in the next time step, we still need to ensure point-wise non-negativity. We use the limiter applied to the shallow water equations by Xing et al \cite{xing2010positivity} and developed in \cite{perthame1996positivity,zhang2010positivity,xing2013positivity} which is a linear scaling around the element average
\begin{equation}
\label{eq:PosPres}
    \widehat{\vec{W}}_{ij}  = \theta \left(\vec{W}_{ij} - \overline{\vec{W}}_{E} \right) + \overline{\vec{W}}_{E},
\end{equation}
where $\theta$ is computed by
\begin{equation}
  \theta =  \min \left(1, \frac{\overline{h}_{E}}{\overline{h}_{E} - m_{E}} \right),
\end{equation}
and $m_E$ is the minimum and $\overline{h}_{E}$ is the average water height in element $E$. The scaling is applied to the water height $h$ and the discharges $hu$ and $hv$ with the same parameter $\theta$ based on the water height. Since we have shown that the average water height is guaranteed positive in Lemma \ref{Lem:PosPres}, this limiter ensures positive water height for all computation nodes. It can be shown that this limiter maintains high order accuracy and is conservative \cite{zhang2010maximum}. We show that the positivity preserving limiter is also entropy stable in Lemma \ref{Lem:ESPP}.
\begin{Lem}[Entropy Stability of Positivity Preservation]
\label{Lem:ESPP}
An entropy stable method  coupled with the positivity preserving limiter \eqref{eq:PosPres} is still entropy stable.
\end{Lem}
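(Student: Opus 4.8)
The plan is to deduce Lemma \ref{Lem:ESPP} from nothing more than the convexity of the entropy function together with the fact that the limiter \eqref{eq:PosPres} writes every limited nodal value as a convex combination of the original value and the conserved element mean; this is the curvilinear counterpart of the one–dimensional argument of Ranocha \cite{ranocha2017}. First I would observe that the limiting factor satisfies $\theta\in[0,1]$: by construction $\theta\leq 1$ with $\theta=1$ (no limiting) whenever the nodal minimum $m_E\geq 0$, while if $m_E<0$ then $m_E\leq\overline{h}_E$ (a weighted average never lies below its smallest sample) and $\overline{h}_E\geq 0$ by Lemma \ref{Lem:PosPres}, so $0\leq\overline{h}_E<\overline{h}_E-m_E$ and $\theta=\overline{h}_E/(\overline{h}_E-m_E)\in[0,1)$. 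Hence \eqref{eq:PosPres} can be written $\widehat{\vec W}_{ij}=\theta\,\vec W_{ij}+(1-\theta)\,\overline{\vec W}_E$, and summing this identity against the positive quadrature–metric weights $\omega_i\omega_j\mathcal{J}_{ij}$ over an element reproduces the conservation property $\overline{\widehat{\vec W}}_E=\overline{\vec W}_E$ (cf. \cite{zhang2010maximum}).

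Next I would estimate the discrete energy element by element. With $S_E:=\sum_{i,j}\omega_i\omega_j\mathcal{J}_{ij}\,e(\vec W_{ij})$ and $|E|:=\sum_{i,j}\omega_i\omega_j\mathcal{J}_{ij}$, convexity of $e$ in the conservative variables applied pointwise gives $e(\widehat{\vec W}_{ij})\leq\theta\,e(\vec W_{ij})+(1-\theta)\,e(\overline{\vec W}_E)$; multiplying by $\omega_i\omega_j\mathcal{J}_{ij}>0$ and summing yields
\[
\widehat S_E\;\leq\;\theta\,S_E+(1-\theta)\,|E|\,e(\overline{\vec W}_E).
\]
A second use of Jensen's inequality — this time with the probability weights $\omega_i\omega_j\mathcal{J}_{ij}/|E|$, which is exactly the weighting used to define $\overline{\vec W}_E$ in \eqref{eq:AverageTerm} — bounds the energy of the mean by the mean of the energy, $|E|\,e(\overline{\vec W}_E)\leq S_E$, so that $\widehat S_E\leq S_E$. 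Summing over all elements shows the limiter never increases the total discrete energy. Since the semi-discrete operator is already entropy stable (Lemma \ref{Lem:ESDGSEM}, and Theorem \ref{thm:artVisc} with artificial viscosity) and the SSPRK scheme \eqref{RKSSP} is a convex combination of forward-Euler stages, interleaving the limiter between the stages leaves the full scheme entropy stable.

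The hard part will be making the two Jensen steps honest rather than purely formal. The curvilinear metrics themselves are a minor bookkeeping issue: one only has to note that $|E|\,e(\overline{\vec W}_E)\leq S_E$ really is an instance of Jensen's inequality for the weights $\omega_i\omega_j\mathcal{J}_{ij}/|E|$, which holds because those weights are positive and sum to one on a mesh with positive Jacobians. A subtler point is that $e$ in \eqref{eq:entropyFunction} is convex only on $\{h\geq 0\}$, whereas the limiter acts precisely when some nodal $h_{ij}$ has become (slightly) negative, so one must verify that the segments joining $\vec W_{ij}$ to $\overline{\vec W}_E$ and the limited states remain in the regime where the convexity inequality is valid. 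The main obstacle, however, is the bathymetry contribution $ghb$ to $e$: it is affine, not strictly convex, in $(h,hu,hv)$, and the limiter changes $h$ but not $b$. I would split $e=\big(\half h(u^2+v^2)+\half g h^2\big)+ghb$, run the convexity–Jensen argument on the strictly convex part, and handle the affine part using that the limiter preserves the element mean of $h$; this closure is immediate when the bathymetry is constant within each element, and for a general polynomial bathymetry it is the place where one needs an extra estimate tying the bathymetry variation inside an element to the strict Jensen gap of the convex part.
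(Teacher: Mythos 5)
Your proposal follows essentially the same route as the paper's own proof: write the limited state as the convex combination $\widehat{\vec W}_{ij}=\theta\,\vec W_{ij}+(1-\theta)\,\overline{\vec W}_E$ with $\theta\in[0,1]$, apply convexity of the entropy nodewise, and then use Jensen's inequality with the weights $\omega_i\omega_j\mathcal J_{ij}/|E|$ to bound the entropy of the mean by the mean entropy, exactly the Ranocha-style argument the paper adapts to curvilinear elements. The supporting steps you add ($\theta\in[0,1]$ via $m_E\le\overline h_E$ and Lemma \ref{Lem:PosPres}, mean preservation under the limiter) are correct, and your two scruples are real but go beyond what the published proof does: the paper treats $\mathcal E\bigl(\,\overline{\vec W}\,\bigr)$ as a single node-independent number and invokes convexity and Jensen for the full entropy, so it neither addresses that $e$ is convex only on $\{h\ge 0\}$ (while the unlimited nodal states may have $h_{ij}<0$) nor the intra-element variation of $b$, where the limiter changes the $ghb$ contribution by $g(1-\theta)|E|\bigl(\overline h\,\overline b-\overline{hb}\bigr)$, a covariance term of indefinite sign; on these points your sketch is, if anything, more careful than the paper's proof, even though you leave the general-bathymetry estimate open rather than closing it.
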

\begin{proof}
We prove this result in a similar fashion to Ranocha \cite{ranocha2017}. Let $\mathcal{E}(\vec{W})$ denote the discrete total energy (entropy) within an element with solution polynomial $\vec W$. Also, we introduced the limited value of the solution polynomial around the element average $\widehat{\vec{W}}$ \eqref{eq:PosPres}. Then
\begin{equation}
\begin{aligned}
\overline{\mathcal{E}\!\left(\widehat{\vec{W}}\right)} = \frac{1}{\big|E\big|}\sum_{i=0}^N\sum_{j=0}^N \mathcal{E}\!\left(\widehat{\vec{W}}_{ij}\right) J_{ij}\, \omega_i \omega_j \stackrel{\mathcal{E}\text{ convex}}{\leq}& \frac{1}{\big|E\big|}\theta\sum_{i=0}^N\sum_{j=0}^N \mathcal{E}( \vec{W}_{ij}) J_{ij}\, \omega_i \omega_j + \frac{1}{\big|E\big|}(1-\theta)\sum_{i=0}^N\sum_{j=0}^N \mathcal{E}\!\left(\,\overline{\vec{W}}\,\right) J_{ij}\, \omega_i \omega_j \\
=&\frac{1}{\big|E\big|}\theta\sum_{i=0}^N\sum_{j=0}^N \mathcal{E}(\vec{W}_{ij}) J_{ij}\, \omega_i \omega_j + (1-\theta) \mathcal{E}\!\left(\, \overline{\vec{W}}\,\right) \\
\stackrel{\text{Jensen's inequality}}{\leq}&\frac{1}{\big|E\big|}\theta\sum_{i=0}^N\sum_{j=0}^N \mathcal{E}(\vec{W}_{ij}) J_{ij}\, \omega_i \omega_j + (1-\theta) \overline{\mathcal{E}(\vec W)} \\
=& \theta \overline{\mathcal{E}(\vec W)} + (1-\theta) \overline{\mathcal{E}(\vec W)} \\
=& \overline{\mathcal{E}(\vec W)}.
\end{aligned}
\end{equation}
So, the entropy of the modified solution polynomial $\hat W$ is less or equal to the entropy of the unmodified polynomial and it follows that the positivity limiter does not increase the entropy of the system.
\end{proof}
We summarize the results of this section in Theorem \ref{thm:PosPresESDGSEM}.
\begin{thm}[ESDGSEM with positivity limiter]\label{thm:PosPresESDGSEM}
The ESDGSEM \eqref{Eq:CurvilinearECDGSEM} combined with the positivity preserving limiter \eqref{eq:PosPres} and the additional time step restrictions \eqref{eq:CFLConstraints} fulfills all the properties from Lemma \ref{Lem:ESDGSEM} and also guarantees non-negative water heights for all LGL-nodes.
\end{thm}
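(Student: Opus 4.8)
The plan is to assemble the theorem from the three preceding results, verifying each property of Lemma~\ref{Lem:ESDGSEM} in turn and then adding the point-wise positivity claim; no new hard estimate is needed, only a careful bookkeeping of how the limiter interacts with the SSPRK stages and with the ``lake at rest'' state. First I would record that the limiter \eqref{eq:PosPres} is a linear scaling of the nodal values $\statevec{W}_{ij}$ towards the element mean $\overline{\statevec{W}}_E$ with a single parameter $\theta\in[0,1]$ applied identically to $h$, $hu$ and $hv$. Averaging \eqref{eq:PosPres} over the element returns $\overline{\statevec{W}}_E$, so the limiter is conservative and leaves the discrete mass conservation (and, for constant bottom, the discrete momentum conservation) of property~\ref{Thm2partone} untouched; high-order accuracy is likewise retained by \cite{zhang2010maximum}.

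Next I would handle the positivity statement. By Lemma~\ref{Lem:PosPres}, one forward-Euler update of the ESDGSEM with a non-negative nodal water height and a time step obeying \eqref{eq:CFLConstraints} produces a non-negative element-mean water height $\overline{h}_E$. Since every stage of the SSPRK scheme \eqref{RKSSP} is a convex combination of forward-Euler sub-steps, applying the limiter after each stage and imposing \eqref{eq:CFLConstraints} at each stage keeps $\overline{h}_E\ge 0$ throughout the full step, exactly as in \cite{shu1988efficient,xing2010positivity}. With $\overline{h}_E\ge 0$ and $m_E\le\overline{h}_E$ (the minimum never exceeds the mean) the scaling factor $\theta=\min\!\left(1,\,\overline{h}_E/(\overline{h}_E-m_E)\right)$ is well defined, and the standard Zhang--Shu estimate \cite{zhang2010positivity,zhang2010maximum} gives $\widehat{h}_{ij}=\theta\,h_{ij}+(1-\theta)\overline{h}_E\ge \theta\, m_E+(1-\theta)\overline{h}_E\ge 0$ at every LGL node.

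For property~\ref{Thm2parttwo} I would combine the semi-discrete energy dissipation of the baseline ESDGSEM with Lemma~\ref{Lem:ESPP}: the spatial operator does not increase the total discrete energy and the limiting step does not increase it either, so inserting the limiter between the RK stages cannot raise the discrete entropy, and the discrete entropy inequality \eqref{eq:entropyInequality} still holds. For the well-balanced property~\ref{Thm2partthree} I would check that the limiter is \emph{inactive} at the ``lake at rest'' state \eqref{eq:lakeAtRest}: there $h=\mathrm{const}-b\ge 0$ pointwise, hence $m_E\ge 0$, so $\overline{h}_E/(\overline{h}_E-m_E)\ge 1$ and $\theta=1$, i.e. $\widehat{\statevec{W}}_{ij}=\statevec{W}_{ij}$; thus the well-balancedness of \eqref{Eq:CurvilinearECDGSEM} carries over verbatim. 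Collecting these observations yields the theorem.

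I expect the only genuinely new piece of reasoning to be the $\theta=1$ check that secures well-balancedness; everything else is a synthesis of Lemmas~\ref{Lem:ESDGSEM}, \ref{Lem:PosPres} and \ref{Lem:ESPP}. The point that needs the most care is that the CFL conditions \eqref{eq:CFLConstraints} and the limiting step must be enforced at each SSPRK stage using the corresponding intermediate states, so that each sub-step has a well-defined limiter and a non-negative intermediate mean; since Lemma~\ref{Lem:PosPres} already covers a single Euler step and the convex-combination extension to SSPRK is standard, this introduces no real obstacle.
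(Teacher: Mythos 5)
Your proposal is correct and follows essentially the same route as the paper: the theorem is assembled by combining Lemma~\ref{Lem:PosPres} (non-negative means under \eqref{eq:CFLConstraints}), the pointwise non-negativity supplied by the limiter \eqref{eq:PosPres}, Lemma~\ref{Lem:ESPP} for entropy stability, and the observation that conservation and well-balancedness are untouched. The only additions you make beyond the paper's proof are explicit but standard details -- the conservativity of the mean under the scaling, the Zhang--Shu estimate $\widehat{h}_{ij}\ge\theta m_E+(1-\theta)\overline{h}_E\ge 0$, and the $\theta=1$ check at the ``lake at rest'' state, which the paper states separately in a remark.
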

\begin{proof}
We proved in Lemma \ref{Lem:PosPres} that preservation of non-negative water mean height is guaranteed if the water height in the previous time step is non-negative for all LGL nodes. The positivity preserving limiter \eqref{eq:PosPres} then guarantees non-negative water height at all LGL nodes. Finally, Lemma \ref{Lem:ESPP} proves that the positivity preserving limiter is entropy stable. We note that the properties from Lemma \ref{Lem:ESDGSEM} of mass conservation and the well-balancedness are unaffected by the positivity preservation procedure.
\end{proof}
\begin{rem}
Theorem \ref{thm:PosPresESDGSEM} also holds when combining the positivity preserving limiter with the artificial viscosity shock capturing from Sec. \ref{Sec:artVisc} since no artificial viscosity is added to the continuity equation.
\end{rem}
\begin{rem}
The positivity limiter does not affect the well-balanced property of the ESDGSEM since the scaling will not be applied for the ``lake at rest'' test case. However, the capability of handling dry areas leads to a generalization called the ``dry lake,'' defined by
\begin{equation}
\begin{aligned}
     h &= \max \left\{H_\text{const}-b, 0\right\} \\
     u&=v=0,
\end{aligned}
\end{equation}
where the bottom topography surpasses the constant water level, creating dry areas. If this leads to partially dry elements, the well-balanced property of the scheme is lost, as the proof relies strongly on the property $H=h+b=\text{const}$ and the consistency of the derivative operator $\mat{D}$. Retaining the well-balanced property for partly dry elements is a difficult challenge and subject to ongoing research. Strategies include adaptive mesh refinement or the development of different local derivative operators that account for the dry nodes within the element, e.g. \cite{bonev2017discontinuous}.
\end{rem}

\section{GPU Implementation}\label{sec:GPU}

Discontinuous Galerkin implementations on graphics processing units (GPUs) have been previously studied, for example in \cite{klockner2009nodal,klockner2013high,chan2016gpu,klockner2012solving,gandham2015gpu,modave2016gpu}.
DG algorithms with explicit time integration are particularly well suited to the massively parallel GPU architectures as most of the computational work is element local and elements are only coupled through interface exchanges for the computation of the surface integrals. None of the modifications in the ESDGSEM change the strong parallelizability. The ESDGSEM is, however, more computationally expensive than the standard DGSEM when counting the number of operations. Specifically the split form volume integral requires additional floating point operations compared to a standard volume integral. Our results show that the immense processing power of modern GPU hardware alleviates most of this increased computational complexity. We even observe that for polynomial degrees $N\leq 7$ the increased computational complexity of the split form is completely mitigated by the unleashed GPU processing power.

The ESDGSEM GPU implementation is based on the abstract language OCCA, a unified approach to multi-threading languages, developed by Medina et al. \cite{medina2014occa}. OCCA compiles OCCA kernel language (OKL) code at runtime for either CPU (Serial, OpenMP) or GPU (OpenCL, CUDA) architectures.
Our main test system features a NVIDIA GTX 1080 which is a higher end consumer grade card at the time this paper is written. We compile the kernels in CUDA and run the code in single precision as the GTX 1080 lacks double precision processing power. Only about $\frac{1}{32}$ of its CUDA cores are suited for double precision computations such that the theoretical peak performance is only 257 GLFOPS/s compared to the 8228 GFLOPS/s for single precision calculation. Thus, we use a different GPU, the NVIDIA Tesla V100, to produce double precision results. The Tesla V100 is a card designed for scientific computations and very well suited for double precision calculations as the double precision processing power is half of the single precision processing power. We show the double precision results in Subsection \ref{subsec:dpResults}.

In the implementation, we set up the mesh and problem on the CPU host and copy all the necessary data onto the GPU at the beginning of the computation. Then, data is only transferred back for MPI communication when using multiple GPUs, or for visualization purposes. Otherwise the whole computation is done on the GPU. As host to device data transfer is rather slow, this is an important aspect for the efficiency of the code. The different parts of the ESDGSEM are separated into individual kernels. There are, for instance, kernels for the computation of the surface integral and for the volume integral. However, the most computationally expensive kernel is by far the volume kernel. This is true for the implementation of the standard discontinuous Galerkin method as well as the ESDGSEM described in this paper. Thus, optimizing the performance of the volume kernel is a key priority and topic of the following discussion. 

The computation of the ESDGSEM volume integral requires significantly more operations than the standard DGSEM volume kernel. To see this, we restate the algorithm for the volume integrals at a node $i,j$:
\begin{equation}
\begin{aligned}
\label{VolInt}
(\text{Split Form Volume})_{ij} &=\sum_{l=0}^{N} \Dmod_{il} \Ftildeavg_{(l,i),j} +  \sum_{l=0}^{N} \Dmod_{jl} \Gtildeavg_{i,(j,l)} , \\
(\text{Standard Volume})_{ij} &=\sum_{l=0}^{N} D_{il} \Ftilde_{l,j} +  \sum_{l=0}^{N} D_{jl} \Gtilde_{i,l} . \\
\end{aligned}
\end{equation}
While the formulas look similar, the computation of the fluxes $\Ftildeavg$ and $\Gtildeavg$ each consists of computing $N+1$ flux evaluations for each node $i,j$, totaling $2(N+1)^3$ flux evaluations. In the standard DG formulation the fluxes need to be evaluated once for each node $i,j$, resulting in $2(N+1)^2$ flux evaluations. Additionally, each individual flux evaluation is more expensive for the ESDGSEM as it essentially consists of averaging two flux evaluations at $i,j$ and $l,j$ for $\Ftildeavg$ or $i,j$ and $i,l$ for $\Gtildeavg$. 

One strategy to increase the performance of the split form volume kernel is to use the symmetry $\Ftildeavg_{(l,i),j}=\Ftildeavg_{(i,l),j}$ to drastically reduce the number of floating point operations. This effectively halves the number of operations at the cost of storing more data, yielding a cost factor of about $2.5$ between standard and ESDGSEM. On GPUs, this trick is not possible due to the limited shared memory space. Precomputing and storing the fluxes $\Ftildeavg_{l,i,j}$ and $\Gtildeavg_{l,i,j}$ for $i,j,l = 0 \ldots,N$ would exceed the shared memory space even for medium sized problems. These architectural limitations raise the question of how to optimize GPU kernels and which end performance is actually satisfactory.
While there are performance numbers provided by the manufacturer, in practice these theoretical numbers are not achievable. In an effort to find a more realistic upper limit for the kernel performance, we estimate an empirical bound by investigating the effective bandwidth computed by
\begin{equation}
\text{effective bandwidth} = \frac{\text{Bytes Read + Bytes Written}}{t}.
\end{equation}
We compare the effective bandwidth of the volume kernels with the effective bandwidth of a GPU memory copy with the same number of bytes read and written. This memory copy is executed with a cudaMemcpyDeviceToDevice command and the resulting bandwidth is called the MemCopy-bandwidth.
Since a cudaMemcpy reads and writes each entry, a buffer of the size $\frac{\text{Bytes Read + Bytes Written}}{2}$ is used to calculate the MemCopy-bandwidth. We compute an empirical MemCopy roofline by scaling the GFLOPS/s achieved by the kernel with the factor of effective kernel bandwidth over MemCopy-bandwidth
\begin{equation}
\text{empirical MemCopy roofline} = \frac{\text{GFLOPS/s} \times \text{effective bandwidth}}{\text{MemCopy-bandwidth}}.
\end{equation}
The MemCopy roofline is a good upper bound on kernel performance whenever a kernel is limited by the memory bandwidth or \textbf{memory-bound}. When a kernel's performance is limited by the computations, or \textbf{compute-bound}, stricter bounds are needed. Another limiting factor can be the shared memory bandwidth. The shared memory bandwidth is estimated by
\begin{equation}
\text{shared memory bandwidth} = \text{\#cores} \times \text{\#SIMD Lanes} \times \|\text{word in bytes}\| \times \text{clock frequency},
\end{equation}
which for the GTX 1080 is 
\begin{equation}
\text{Shared-Mem-Bandwidth} = 20\times32\times4 \text{bytes} \times 1.607 \text{GHz} =4113.92\text{GB/s} .
\end{equation}
With the shared memory bandwidth we can estimate a bounding roofline by
\begin{equation}
\text{shared memory roofline}=\text{Shared-Mem-Bandwidth} \times \frac{\text{flops per block}}{\text{shared-mem bytes loaded+stored per block}} .
\end{equation}
We combine the MemCopy roofline and the shared memory roofline to find the combined roofline we will use in the following kernel analysis:
\begin{equation}
\text{combined roofline}=\min \left(\text{shared memory roofline},\text{MemCopy roofline}\right).
\end{equation}
These performance bounds give us a sharper hardware limit on the performance than numbers provided by the manufacturer and whenever the actual kernel performance is close to the roofline we can be satisfied with the kernel efficiency. In compute bound cases it may, however, still be impossible to actually reach the performance bound provided by the roofline.

Starting with a straightforward, naive implementation of the split form volume integral from \eqref{VolInt}  we sequentially introduce optimization steps and document the impact on the kernel performance. We measure the average kernel runtime for a sample test case over 1000 kernel executions. We count the number of floating point operations and obtain the performance measure GFLOPS/s as the number of floating point operations performed during each second of runtime.
We increase the polynomial order $N$ and decrease the number of elements $K$ to increase the computational complexity of the volume integral, while staying as close as possible to the the GPU memory of $8$ GB for the GTX 1080. The exact combinations of polynomial orders and number of elements can be found in Figure \ref{runtimeSameLOAD}.
The optimization steps applied in each different kernel version are described in \ref{app:Versioning}. The optimization techniques are similar to previous works, e.g. in \cite{gandham2015gpu} and subject to current research, e.g. \cite{chan2017gpu,swirydowicz2017acceleration,karakus2017gpu}.

We illustrate the impact of optimization techniques and plot the achieved performance of the different kernel versions and the empirical roofline for split form and standard kernels in Figure \ref{SharedMemRoof1} and Figure \ref{SharedMemRoof2}. 
Finally, we compare the most optimized versions for each kernel and show operation counts and runtimes for $N=1,\ldots,15$ with similar degrees of freedom in Figure \ref{runtimeSameDOF} and for similar memory loads in Figure \ref{runtimeSameLOAD}. The achieved performances of the ESDGSEM volume kernel are memory bound and thus close to optimal for $N\le 7$ and then stagnate as the performance becomes compute bound. The standard volume kernel behaves differently as even for $N=15$ the achieved performance lies close to the roofline.  As split form and standard kernels require the same amount of data, the runtime is almost identical in the memory bound region. For higher polynomial orders, the split form volume kernel becomes compute bound and the runtimes deviate. Here we can clearly observe the diverging number of floating point operations. However, while there is a factor of $6$ difference in the number of floating point operations for the ESDGSEM compared to standard DG, we only observe a runtime difference of a factor of $1.5$ in the most computationally expensive $N=15$ case.
Another observation to point out is that the achieved GFLOPS/s of the split form kernel are higher by a factor of $3$ to $5.5$. This suggests that the ESDGSEM volume kernel makes good use of the computational capabilities of the GPU architecture and is indeed a very good fit. Especially for the lower polynomial orders the additional computational complexity is completely mitigated by GPU processing power.

\begin{figure}
\centering
\begin{tikzpicture}[scale=1.60]
\begin{axis}[ylabel=GFLOPS/s,xlabel=$N$,legend pos=north west,legend style={nodes={scale=0.5, transform shape}}, ]
\addplot[color=red,dashed]  table[x=N, y=GFLOPSV0, col sep=semicolon] {Analysis/GFLOPS_FD_Allversions.csv};
\addplot[color=green,mark=*] table [x=N, y=GFLOPSV1, col sep=semicolon] {Analysis/GFLOPS_FD_Allversions.csv};
\addplot[color=pink,mark=star] table [x=N, y=GFLOPSV2, col sep=semicolon] {Analysis/GFLOPS_FD_Allversions.csv};
\addplot[color=gray,mark=diamond*] table [x=N, y=GFLOPSV3, col sep=semicolon] {Analysis/GFLOPS_FD_Allversions.csv};
\addplot[color=purple, mark=triangle*] table [x=N, y=GFLOPSV4, col sep=semicolon] {Analysis/GFLOPS_FD_Allversions.csv};
\addplot[color=teal, mark=otimes*] table [x=N, y=GFLOPSV5, col sep=semicolon] {Analysis/GFLOPS_FD_Allversions.csv};
\addplot[color=blue,mark=square*] table [x=N, y=GFLOPSV6, col sep=semicolon] {Analysis/GFLOPS_FD_Allversions.csv};
\addplot[color=black] table [x=N, y=MinBound, col sep=semicolon] {Analysis/GFLOPS_FD_Allversions.csv};
\addlegendentry{V0}
\addlegendentry{V1}
\addlegendentry{V2}
\addlegendentry{V3}
\addlegendentry{V4}
\addlegendentry{V5}
\addlegendentry{V6}
\addlegendentry{Roof}
\end{axis}
\end{tikzpicture}
\caption{Comparison of all ESDGSEM volume kernel versions and the memory roofline on a NVIDIA GTX 1080 in single precision.}
\label{SharedMemRoof1}
\end{figure}
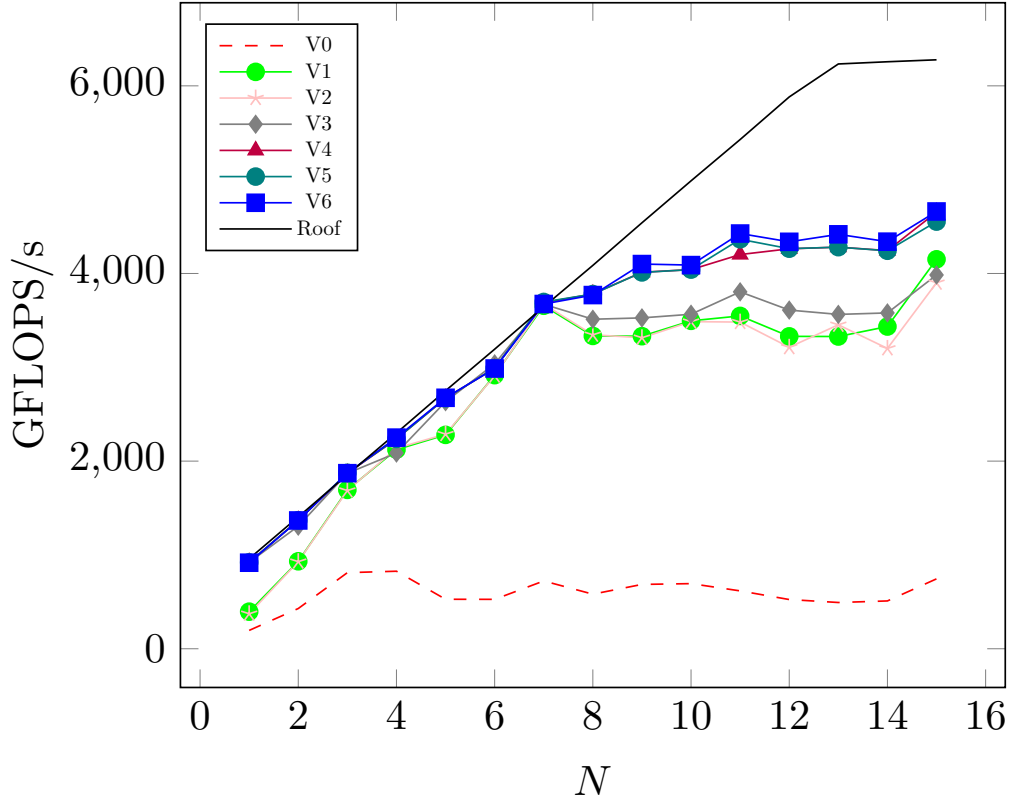
\begin{figure}
\centering
\begin{tikzpicture}[scale=1.60]
\begin{axis}[ylabel=GFLOPS/s,xlabel=$N$,legend pos=north west,legend style={nodes={scale=0.5, transform shape}}, ]
\addplot[color=red,dashed] table [x=N, y=GFLOPSV0, col sep=semicolon] {Analysis/GFLOPS_SD_Allversions.csv};
\addplot[color=green,mark=*] table [x=N, y=GFLOPSV1, col sep=semicolon] {Analysis/GFLOPS_SD_Allversions.csv};
\addplot[color=pink,mark=star] table [x=N, y=GFLOPSV2, col sep=semicolon] {Analysis/GFLOPS_SD_Allversions.csv};
\addplot[color=gray,mark=diamond*] table [x=N, y=GFLOPSV3, col sep=semicolon] {Analysis/GFLOPS_SD_Allversions.csv};
\addplot[color=purple, mark=triangle*] table [x=N, y=GFLOPSV4, col sep=semicolon] {Analysis/GFLOPS_SD_Allversions.csv};
\addplot[color=blue,mark=square*] table [x=N, y=GFLOPSV5, col sep=semicolon] {Analysis/GFLOPS_SD_Allversions.csv};
\addplot[color=black] table [x=N, y=MinBound, col sep=semicolon] {Analysis/GFLOPS_SD_Allversions.csv};
\addlegendentry{V0}
\addlegendentry{V1}
\addlegendentry{V2}
\addlegendentry{V3}
\addlegendentry{V4}
\addlegendentry{V5}
\addlegendentry{Roof}
\end{axis}
\end{tikzpicture}
\caption{Comparison of all standard DGSEM volume kernel versions and the memory roofline on a NVIDIA GTX 1080 in single precision.}
\label{SharedMemRoof2}
\end{figure}

\begin{figure}
\centering
\pgfplotstabletypeset[
col sep = semicolon,
every head row/.style={before row=\toprule,after row=\midrule},
every last row/.style={after row=\bottomrule},
]
{Analysis/Disc_sameDof.csv}
\hspace{2cm}
\begin{tikzpicture}[baseline]
\begin{axis}[ylabel=operation count (billions),
            xlabel=N,
            legend pos=north west,
            anchor=center]
\addplot table [x=N, y=GFLOPSFD, col sep=semicolon] {Analysis/TimingsTable_sameDOF.csv};
\addplot table [x=N, y=GFLOPSSD, col sep=semicolon] {Analysis/TimingsTable_sameDOF.csv};
\addlegendentry{ESDGSEM}
\addlegendentry{Standard}
\end{axis}
\end{tikzpicture}
\begin{tikzpicture}
\begin{axis}[ylabel=runtime ($\mu s$ per million DOFs),
            xlabel=N,
            legend pos=north west,
            ybar,
            ymin=0,
            ymax=110,
            xmin = 0.5, xmax = 15.5,
            bar width=.2cm,
            width=\textwidth,
            height=.5\textwidth]
\addplot[fill=blue, postaction={pattern=horizontal lines}] table [x=N, y=FD_ms_mio, col sep=semicolon] {Analysis/TimingsTable_sameDOF.csv};
\addplot[fill=red] table [x=N, y=SD_ms_mio, col sep=semicolon] {Analysis/TimingsTable_sameDOF.csv};
\addplot[fill=yellow, postaction={pattern=dots}] table [x=N, y=MemCopy_ms_mio, col sep=semicolon] {Analysis/TimingsTable_sameDOF.csv};
\addlegendentry{ESDGSEM}
\addlegendentry{Standard}
\addlegendentry{MemCpy}
\end{axis}
\end{tikzpicture}
\caption{Number of operations and runtime comparison for one kernel executio between the split form volume integral computation of the ESDGSEM and the volume integral of the standard DG method for similar number of degrees of freedom (DOFs) on a NVIDIA GTX 1080 in single precisions. Polynomial order $N$ and number of elements $K$ per spatial direction are listed in the top left table.}
\label{runtimeSameDOF}
\end{figure}

\begin{figure}
\centering
\pgfplotstabletypeset[
col sep = semicolon,
every head row/.style={before row=\toprule,after row=\midrule},
every last row/.style={after row=\bottomrule},
]
{Analysis/Disc_incLoad.csv}
\hspace{2cm}
\begin{tikzpicture}[baseline]
\begin{axis}[ylabel=operation count (billions),
            xlabel=N,
            legend pos=north west,
            anchor=center]
\addplot table [x=N, y=GFLOPSFD, col sep=semicolon] {Analysis/TimingsTable_incLoad.csv};
\addplot table [x=N, y=GFLOPSSD, col sep=semicolon] {Analysis/TimingsTable_incLoad.csv};
\addlegendentry{ESDGSEM}
\addlegendentry{Standard}
\end{axis}
\end{tikzpicture}
\begin{tikzpicture}
\begin{axis}[ylabel=runtime ($\mu s$ per million DOFs),
            xlabel=N,
            legend pos=north west,
            ybar,ymin=0,ymax=110,
            xmin = 0.5, xmax = 15.5,
            bar width=.2cm,
            width=\textwidth,
            height=.5\textwidth]
\addplot[fill=blue, postaction={pattern=horizontal lines}] table [x=N, y=FD_ms_mio, col sep=semicolon] {Analysis/TimingsTable_incLoad.csv};
\addplot[fill=red]  table [x=N, y=SD_ms_mio, col sep=semicolon] {Analysis/TimingsTable_incLoad.csv};
\addplot[fill=yellow, postaction={pattern=dots}] table [x=N, y=MemCopy_ms_mio, col sep=semicolon] {Analysis/TimingsTable_incLoad.csv};
\addlegendentry{ESDGSEM}
\addlegendentry{Standard}
\addlegendentry{MemCpy}
\end{axis}
\end{tikzpicture}
\caption{Number of operations and runtime comparison for one kernel executio between the split form volume integral computation of the ESDGSEM and the volume integral of the standard DG method for increasing computational complexity on a NVIDIA GTX 1080 in single precision. Polynomial order $N$ and number of elements $K$ per spatial direction are listed in the top left table.}
\label{runtimeSameLOAD}
\end{figure}

\subsection{Double Precision Results}
\label{subsec:dpResults}
We use a different test system with a Tesla V100 to run the ESDGSEM volume kernel in double precision. 
We use the same kernels as for the single precision results. We show the performance of the different split form kernel versions in Figure \ref{ESDGSEM_SharedMemRoof_DP} as well as for the standard kernel versions in Figure \ref{STD_SharedMemRoof_DP}. We, again, compare runtimes for the most optimized kernels in Figure
\ref{DP_runtimeSameLOAD}. The elements per work block were found by optimization for the GTX 1080 so it may be possible to find better values for the Tesla V100. This could explain why the most optimized kernel is already compute bound for $N=6$ to $N=7$ and why there is a significant drop in performance for $N=9$. We do however see very satisfactory performance for lower order $N$ where, once again, the split form kernel performance is close to optimal.
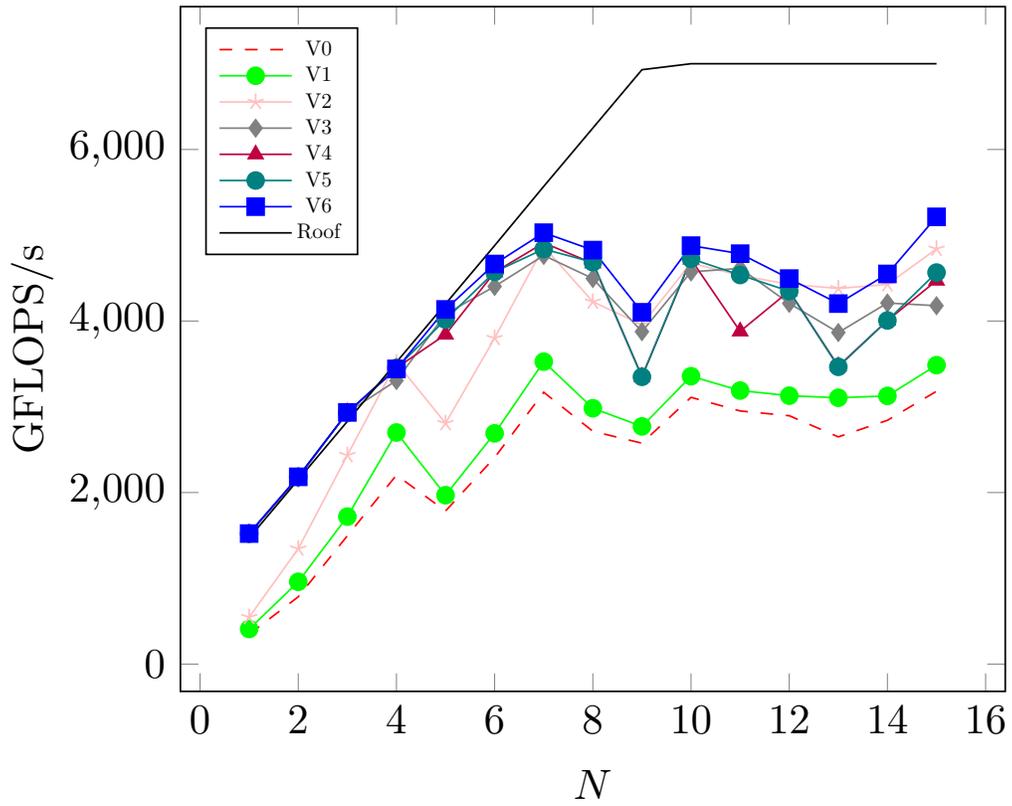
\begin{figure}
\centering
\begin{tikzpicture}[scale=1.60]
\begin{axis}[ylabel=GFLOPS/s,xlabel=$N$,legend pos=north west,legend style={nodes={scale=0.5, transform shape}}, ]
\addplot[color=red,dashed]  table[x=N, y=GFLOPSV0, col sep=semicolon] {Analysis/GFLOPS_FD_Allversions_DP.csv};
\addplot[color=green,mark=*] table [x=N, y=GFLOPSV1, col sep=semicolon] {Analysis/GFLOPS_FD_Allversions_DP.csv};
\addplot[color=pink,mark=star] table [x=N, y=GFLOPSV2, col sep=semicolon] {Analysis/GFLOPS_FD_Allversions_DP.csv};
\addplot[color=gray,mark=diamond*] table [x=N, y=GFLOPSV3, col sep=semicolon] {Analysis/GFLOPS_FD_Allversions_DP.csv};
\addplot[color=purple, mark=triangle*] table [x=N, y=GFLOPSV4, col sep=semicolon] {Analysis/GFLOPS_FD_Allversions_DP.csv};
\addplot[color=teal, mark=otimes*] table [x=N, y=GFLOPSV5, col sep=semicolon] {Analysis/GFLOPS_FD_Allversions_DP.csv};
\addplot[color=blue,mark=square*] table [x=N, y=GFLOPSV6, col sep=semicolon] {Analysis/GFLOPS_FD_Allversions_DP.csv};
\addplot[color=black] table [x=N, y=MinBound, col sep=semicolon] {Analysis/GFLOPS_FD_Allversions_DP.csv};
\addlegendentry{V0}
\addlegendentry{V1}
\addlegendentry{V2}
\addlegendentry{V3}
\addlegendentry{V4}
\addlegendentry{V5}
\addlegendentry{V6}
\addlegendentry{Roof}
\end{axis}
\end{tikzpicture}
\caption{Comparison of all ESDGSEM volume kernel versions and the memory roofline on a Tesla V100 in double precision.}
\label{ESDGSEM_SharedMemRoof_DP}
\end{figure}

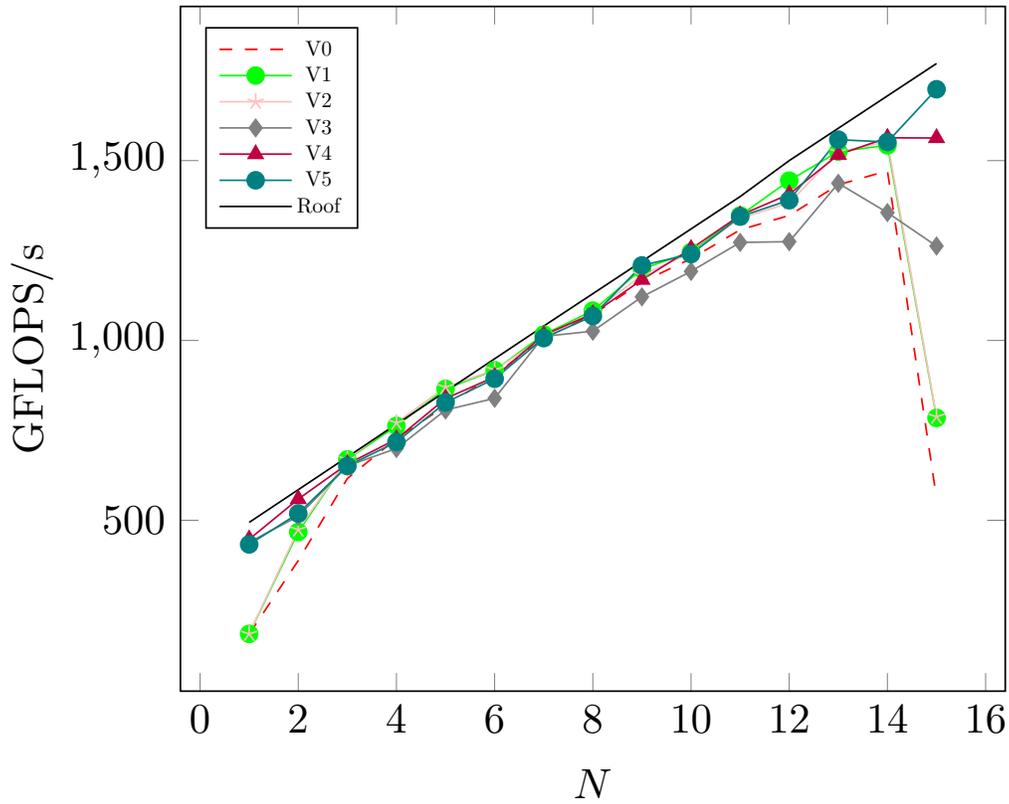
\begin{figure}
\centering
\begin{tikzpicture}[scale=1.60]
\begin{axis}[ylabel=GFLOPS/s,xlabel=$N$,legend pos=north west,legend style={nodes={scale=0.5, transform shape}}, ]
\addplot[color=red,dashed]  table[x=N, y=GFLOPSV0, col sep=semicolon] {Analysis/GFLOPS_SD_Allversions_DP.csv};
\addplot[color=green,mark=*] table [x=N, y=GFLOPSV1, col sep=semicolon] {Analysis/GFLOPS_SD_Allversions_DP.csv};
\addplot[color=pink,mark=star] table [x=N, y=GFLOPSV2, col sep=semicolon] {Analysis/GFLOPS_SD_Allversions_DP.csv};
\addplot[color=gray,mark=diamond*] table [x=N, y=GFLOPSV3, col sep=semicolon] {Analysis/GFLOPS_SD_Allversions_DP.csv};
\addplot[color=purple, mark=triangle*] table [x=N, y=GFLOPSV4, col sep=semicolon] {Analysis/GFLOPS_SD_Allversions_DP.csv};
\addplot[color=teal, mark=otimes*] table [x=N, y=GFLOPSV5, col sep=semicolon] {Analysis/GFLOPS_SD_Allversions_DP.csv};
\addplot[color=black] table [x=N, y=MinBound, col sep=semicolon] {Analysis/GFLOPS_SD_Allversions_DP.csv};
\addlegendentry{V0}
\addlegendentry{V1}
\addlegendentry{V2}
\addlegendentry{V3}
\addlegendentry{V4}
\addlegendentry{V5}
\addlegendentry{Roof}
\end{axis}
\end{tikzpicture}
\caption{Comparison of all standard DG volume kernel versions and the memory roofline on a Tesla V100 in double precision.}
\label{STD_SharedMemRoof_DP}
\end{figure}

\begin{figure}
\centering
\begin{tikzpicture}
\begin{axis}[ylabel=runtime ($\mu s$ per million DOFs),
            xlabel=N,
            legend pos=north west,
            ybar,ymin=0,ymax=110,
            xmin = 0.5, xmax = 15.5,
            bar width=.2cm,
            width=\textwidth,
            height=.5\textwidth]
\addplot[fill=blue, postaction={pattern=horizontal lines}] table [x=N, y=FD_ms_mio, col sep=semicolon] {Analysis/Timings_DP.csv};
\addplot[fill=red] table [x=N, y=SD_ms_mio, col sep=semicolon] {Analysis/Timings_DP.csv};
\addplot[fill=yellow, postaction={pattern=dots}] table [x=N, y=MemCopy_ms_mio, col sep=semicolon] {Analysis/Timings_DP.csv};
\addlegendentry{ESDGSEM}
\addlegendentry{Standard}
\addlegendentry{MemCpy}
\end{axis}
\end{tikzpicture}
\caption{Number of operations and runtime comparison for one kernel execution between the split form volume integral computation of the ESDGSEM and the volume integral of the standard DG method for increasing computational complexity in double precision on the NVIDIA Tesla V100. The number of elements per polynomial order are identical to the single precision case in Figure \ref{runtimeSameLOAD}.}
\label{DP_runtimeSameLOAD}
\end{figure}
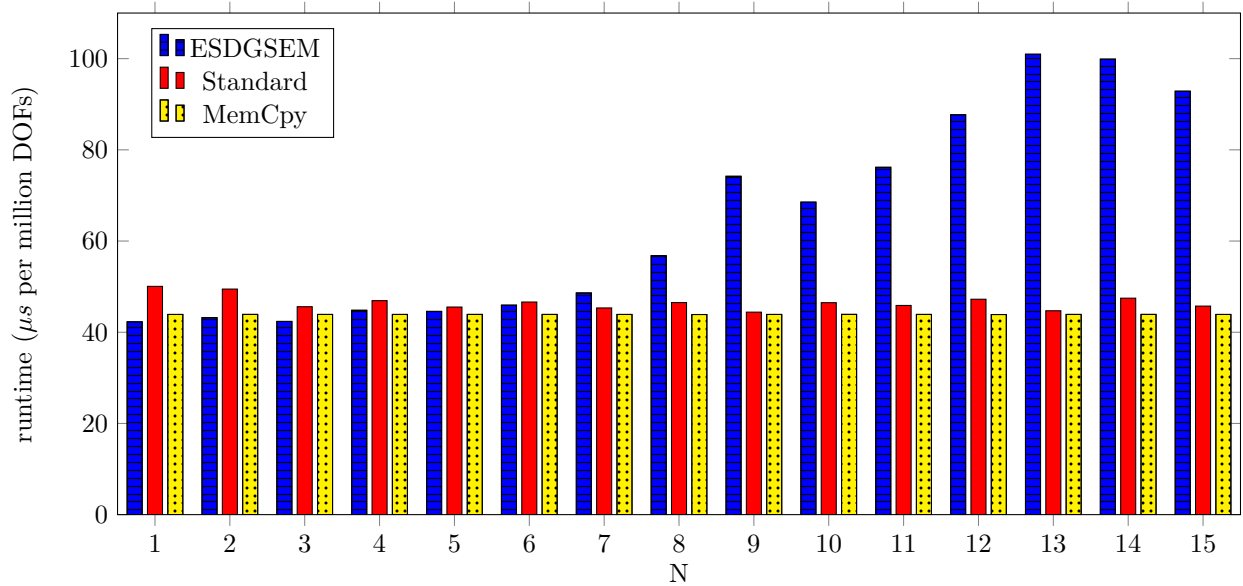
\FloatBarrier

\section{Numerical Results}\label{sec:num}
In this section we first numerically verify and demonstrate the theoretical properties of the scheme in Subsection \ref{sec:consES}. 
In particular, we present the difference in the numerical solution of the entropy stable scheme with artificial viscosity presented in this paper, the ESDGSEM without artifical viscosity, and a standard DGSEM of the same polynomial order. To fully explore the solution quality of the new scheme, we apply the ESDGSEM with artificial viscosity to several well-known test cases which require shock capturing and positivity preservation. Namely, the oscillating lake \cite{xing2011high,gallardo2007well,swefv2,Marras_GalerkinViscSW,VATER20151}, the three mound dam break on a closed channel \cite{gallardo2007well,swefv2,Marras_GalerkinViscSW,brufau2002numerical, xing2013positivity} and a solitary wave run-up \cite{bollermann2011finite,Marras_GalerkinViscSW,phung2008numerical}. Lastly, to test the properties on a curvilinear mesh, we modify the partial dam break from \cite{ESDGSEM2D_paper} to feature a dry area on the shallow side of the dam. 

We choose the time step based on a typical CFL condition. The additional time step restrictions of the positivity limiter \eqref{eq:CFLConstraints} are sufficient but not necessary. If we detect that a smaller time step is needed, we adjust it accordingly. A more detailed discussion on choosing an appropriate time step for positivity preserving schemes is given by Xing and Zhang \cite{xing2013positivity}.

All the examples have been computed on two NVIDIA GTX 1080 GPUs, where we use MPI parallelization such that each MPI rank hosts one GPU via OCCA \cite{medina2014occa,medina2015okl}.

\subsection{Theory Validation}\label{sec:consES}

We first verify the theoretical entropy stable properties of the approximation described in this work. The ESDGSEM was previously shown to be high-order accurate but unphysical overshoots remained near discontinuities in Wintermeyer et al. \cite{ESDGSEM2D_paper}. Therefore, we demonstrate in Sec. \ref{subsec:EntropyGlitch} that applying the artificial viscosity from Thm. \ref{thm:artVisc} can remove spurious oscillations and remain entropy stable. We then consider a shocktube test in Sec. \ref{subsubsec:PPnec} that requires the positivity preserving limiter to model a flow with dry regions. Further, we numerically verify the result of Lemma \ref{Lem:PosPres} that the limited DG solution is entropy stable. We choose a positivity tolerance level of $h^{\text{TOL}}=10^{-4}$ below which velocities are set to zero in the positivity preserving stage.

\subsubsection{Entropy Glitch}
\label{subsec:EntropyGlitch}
We use a specific dam break problem to demonstrate the necessity of entropy stability to guarantee correct solutions. We set up a two-dimensional version of the one-dimensional test case previously done in \cite{lukacova2009entropy}. We use a uniform Cartesian mesh on the domain $\Omega=[-1,1]^2$ with $100\times 100$ elements. There is a shock at $x=0$ with water heights $h_L =1$ and $h_R=0.1$ and the velocities are zero, i.e.,
\begin{equation}
\begin{aligned}
&h(x,y,0)= \left\{
\begin{aligned}
&1.0, \quad&\textrm{if } x < 0  \\
&0.1, \quad&\textrm{otherwise}
\end{aligned}
\right.,\\
&u=v=0.
\end{aligned}
\end{equation}
For simplicity, the gravity constant is set to $g=10$ for this example and the test is run up to $T=0.2$.  We show results for $N=1$ as the entropy glitch even occurs for this most robust case. Also the standard DGSEM quickly becomes unstable for higher polynomial orders due to the oscillations introduced by the entropy glitch. We still use the positivity limiter to catch some minor overshoots due to the oscillations. We compare a slice of the solution at $y=0$ with the 1D solution provided in \cite{lukacova2009entropy}.
Solutions obtained by the standard DG method with a local Lax-Friedrichs numerical interface flux show an unphysical discontinuity, an ``entropy glitch,'' at $x=0$, see Figure \ref{fig:EntropyGlitchN1}. The unphysical shock does not appear in the solutions obtained by the ESDGSEM. There are however oscillations at the shock front for both schemes. We also plot the evolution of the total entropy for the both cases and observe that the total entropy builds up at around $t=0.02$ for the standard DGSEM whereas the total entropy is strictly decreasing for the ESDGSEM. We conclude that even limited entropy build up may cause unphysical phenomena to appear and destroy the correctness of the solution.

\begin{figure}[!ht]
   \centering
    \subfloat[Water height $H$, $N=1$]
    {
        \includegraphics[width=0.5\textwidth]{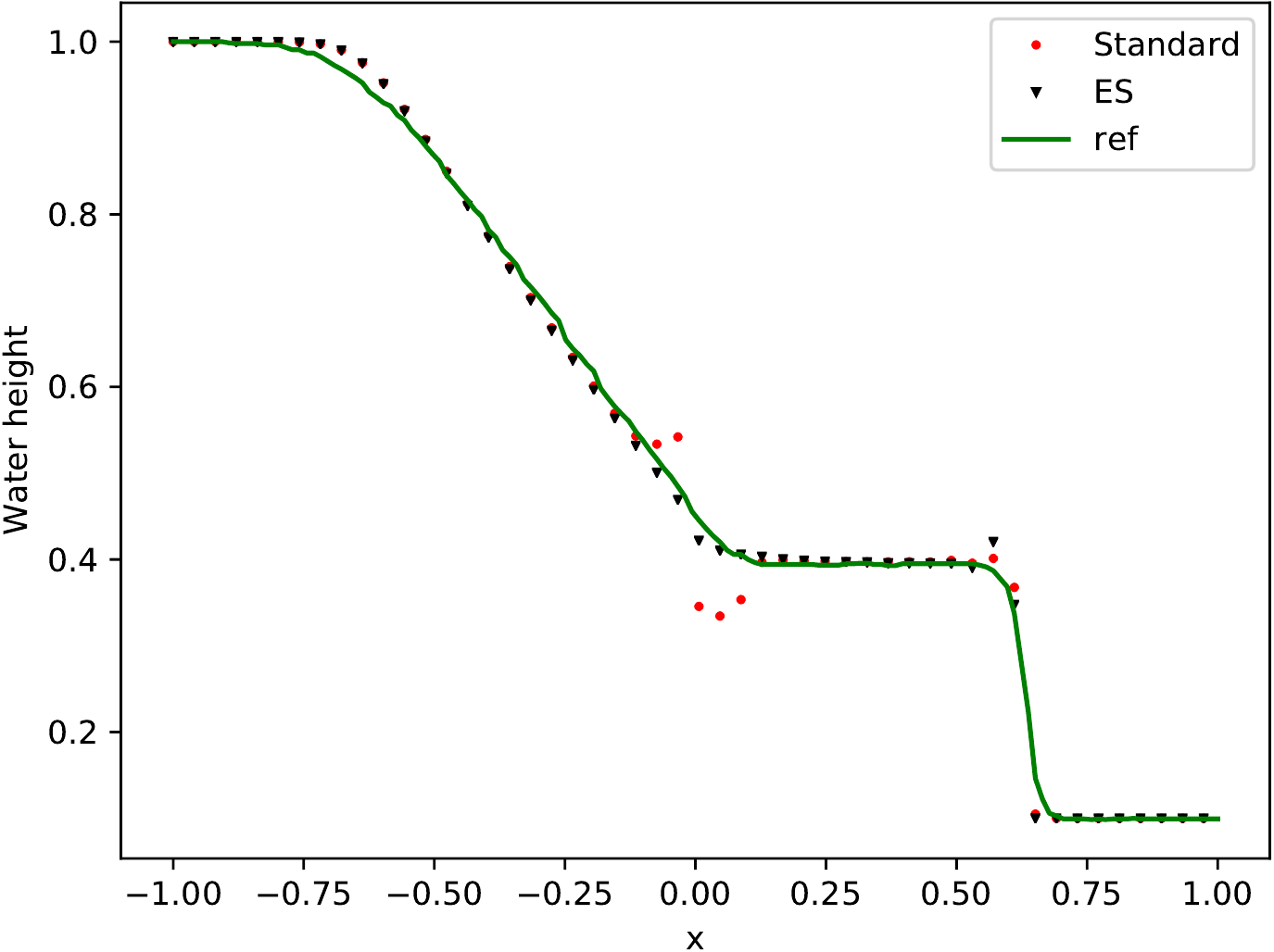}
    }
    \subfloat[Total Entropy, $N=1$]
    {
        \includegraphics[width=0.5\textwidth]{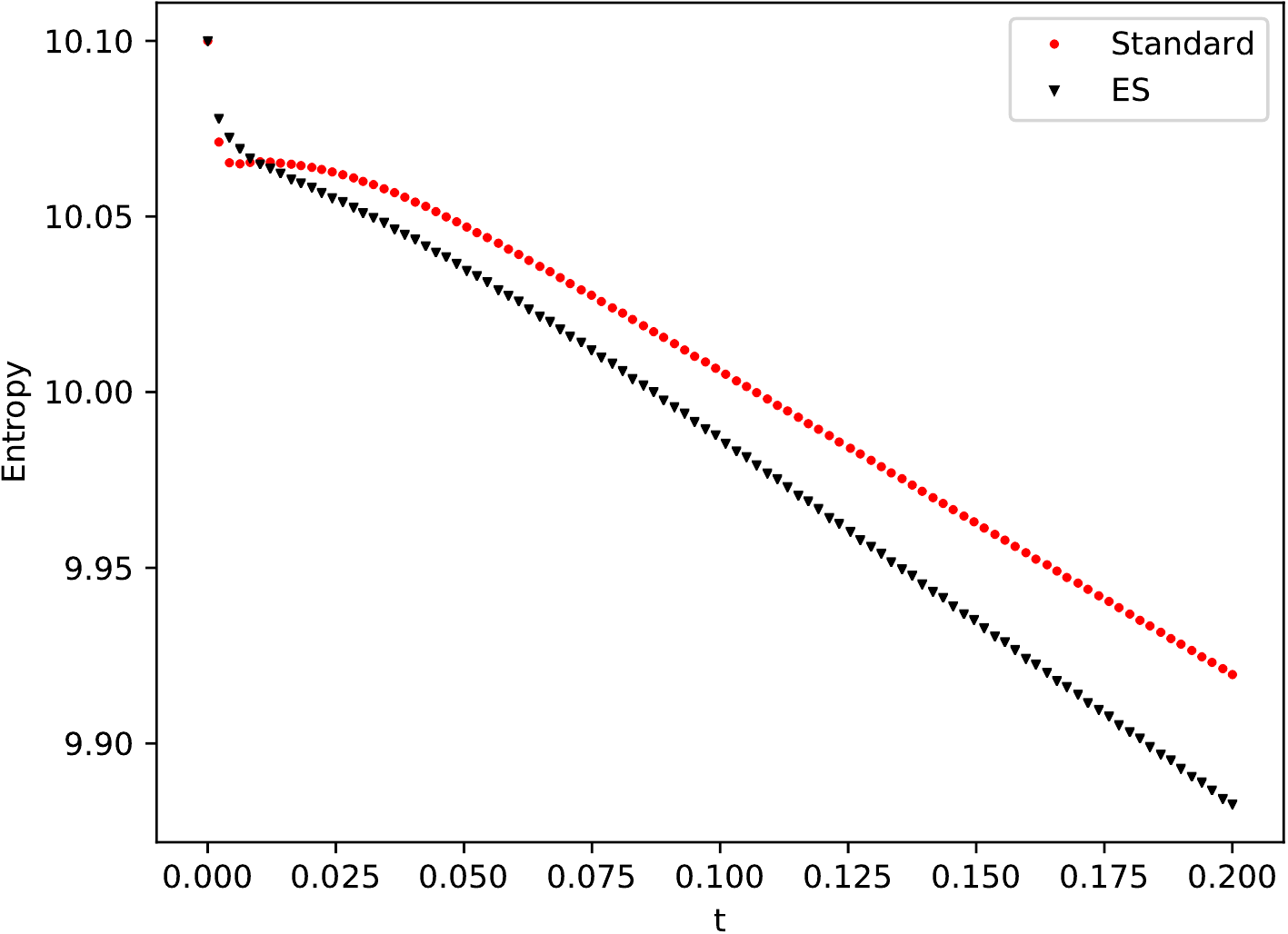}
    }
    \caption{Entropy glitch test case for ESDGSEM and standard DGSEM for $N=1$ at $T=0.2$ sliced at $y=0$ compared to a 1D reference solution from \cite{lukacova2009entropy}. The standard DGSEM produces an incorrect shock due to the unphysical entropy production.}
    \label{fig:EntropyGlitchN1}
\end{figure}

\subsubsection{Necessity of Positivity Limiter}
\label{subsubsec:PPnec}

First, we numerically verify the mass conservation and entropy stability of the ESDGSEM with artificial viscosity and positivity preservation in the presence of dry areas. The test case is a dam break problem with periodic boundary conditions at $y=\pm20$ and solid walls at $x=\pm20$ on the domain $\Omega=[-20,20]^2$ with initial conditions
\begin{equation}
\begin{aligned}
&h(x,y,0)= \left\{
\begin{aligned}
&10.0, \quad&\textrm{if } x < 0  \\
&0.0, \quad&\textrm{otherwise}
\end{aligned}
\right.,\\
&u=v=0.
\end{aligned}
\end{equation}
We use a polynomial order of $N=3$ and a uniform Cartesian mesh with $50\times 50$ elements. We set the viscosity coefficient to be $\epsilon_0=0.1$ and use a gravitational constant of $g=9.81$.
From the results in Figure \ref{fig:EntropyStabilityPP} we see that the entropy is monotonically decreasing as expected. The mass is conserved up to machine precision. This test case crashes immediately without the use of a positivity limiter.
\begin{figure}[!ht]
   \centering
    \subfloat[Water height $H$ at $y=0$, $T=1.0$]
    {
        \includegraphics[width=0.5\textwidth]{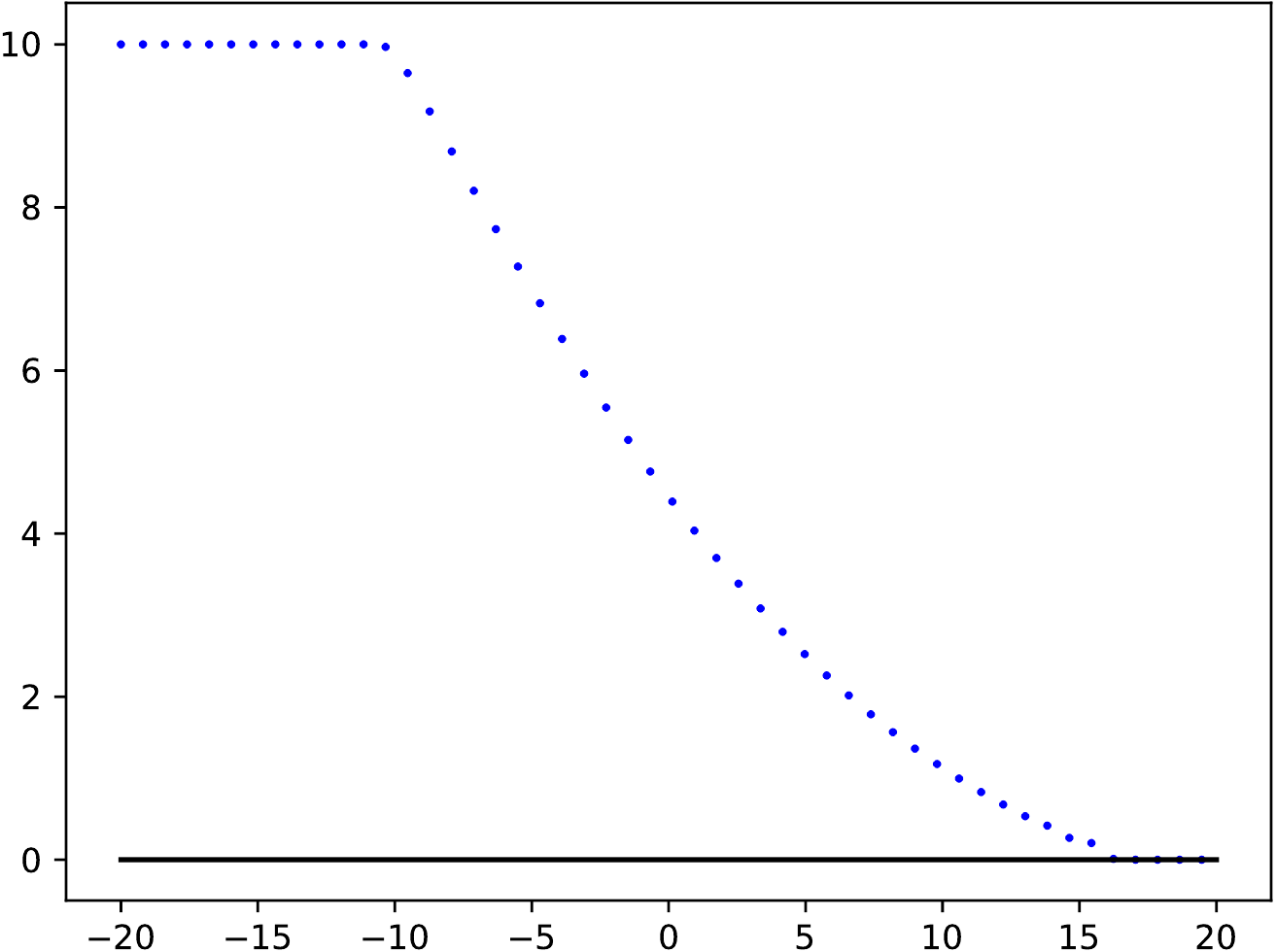}
    }
    \subfloat[Total Entropy over time]
    {
        \includegraphics[width=0.5\textwidth]{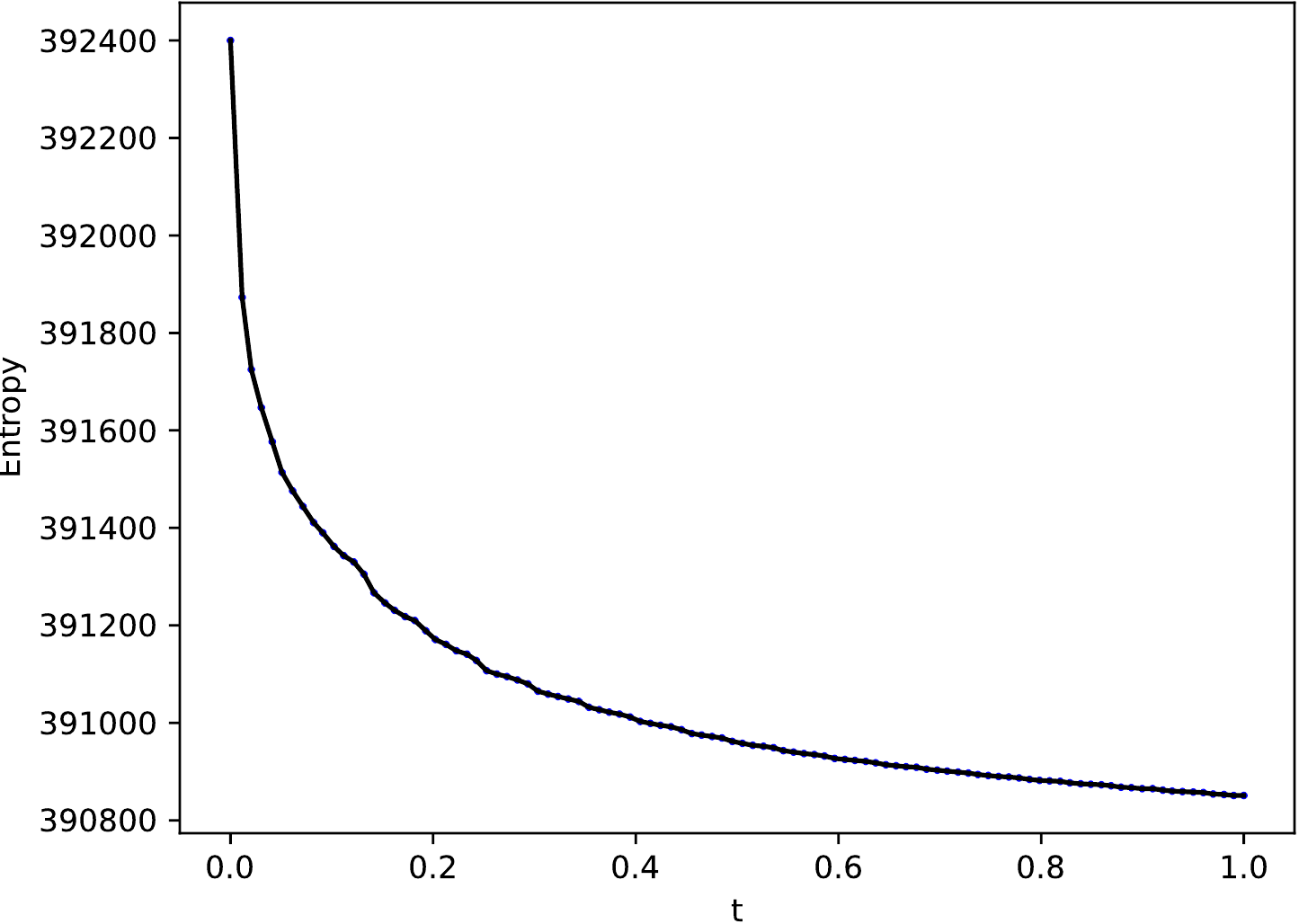}
    }
    \\
    \caption{Slice of total water height at final time and entropy evolution over time for a dam break problem with a dry zone approximated with the ESDGSEM with artificial viscosity and positivity limiter at polynomial order $N=3$.}
    \label{fig:EntropyStabilityPP}
\end{figure}

\FloatBarrier

\FloatBarrier

\subsection{Oscillating Lake}
The oscillating lake is a parabolic bowl partly covered with water that is moving around the center of the domain $\Omega = [-2,2]\times[-2,2]$ and defined by
\begin{equation}
\begin{aligned}
h(x,y,0)&= \max\left(0, \sigma \frac{h_0}{a^2} \left (2 x \cos{\omega t} + 2y \sin{\omega t} - \sigma ) + h_0 - b\right) \right), \\
u(x,y,0)&=-\sigma \omega \sin{\omega t} ,\\
v(x,y,0)&=\sigma \omega \cos{\omega t} .
\end{aligned}
\end{equation}
with parabolic bottom topography
\begin{equation}
\begin{aligned}
\label{ParabolicBowlBottom}
b(x,y) = h_0 \frac{x^2+y^2}{a^2},
\end{aligned}
\end{equation}
with parameters $ h_0  = 0.1$, $a = 1$ $\sigma  =0.5$ and $\omega = \frac{\sqrt{2g h_0}}{a}$. The boundary conditions can be set to solid walls as the water flow never reaches the domain boundaries. The base viscosity parameter is set to $\epsilon_0 = 0.01$. The gravitational constant is set to $g=9.81$. We use a uniform Cartesian mesh with $200\times 200$ elements for the computation.

The oscillating lake test case tests how well a numerical method is able to handle wetting and drying as the fluid evolves. There are no strong shocks so the water pond should travel smoothly around the center of the domain. We plot a slice through $y=0$ as well as the dynamic viscosity coefficient for the domain in Figure \ref{fig:OscillatingLake}. We see that viscosity is only applied on the edges of the wet circle with small magnitudes, as not much viscosity is necessary for this case.
\begin{figure}[!ht]
   \centering
    \subfloat[$H$-slice at $y=0$, $t=T/6.0$]
    {
        \includegraphics[scale=0.4]{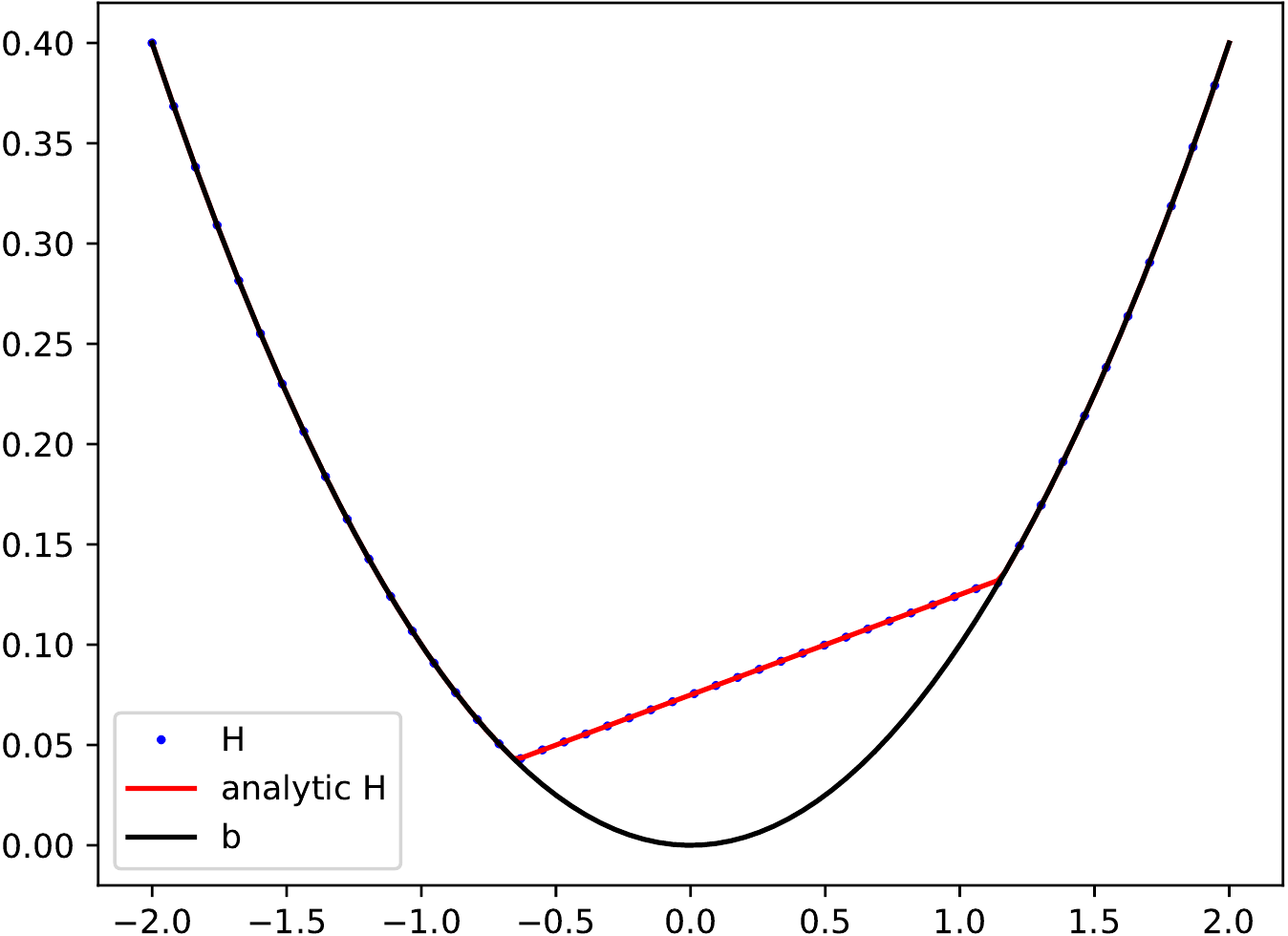}
    }
    \subfloat[Viscous coefficient $\epsilon$, $t=T/6.0$]
    {
        \includegraphics[scale=0.4]{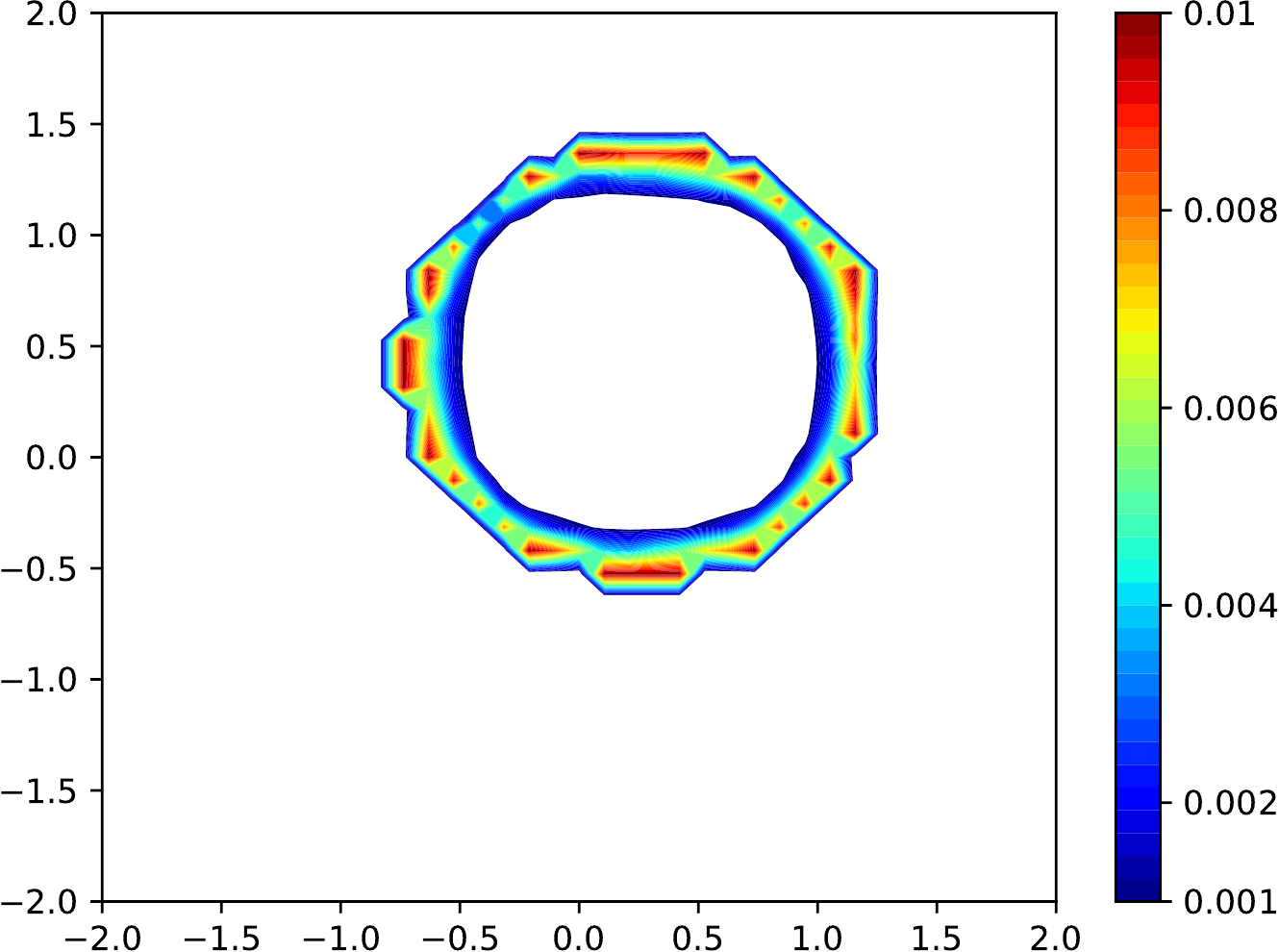}
    }
    \\
\subfloat[$H$-slice at $y=0$, $t=T/3.0$]
    {
        \includegraphics[scale=0.4]{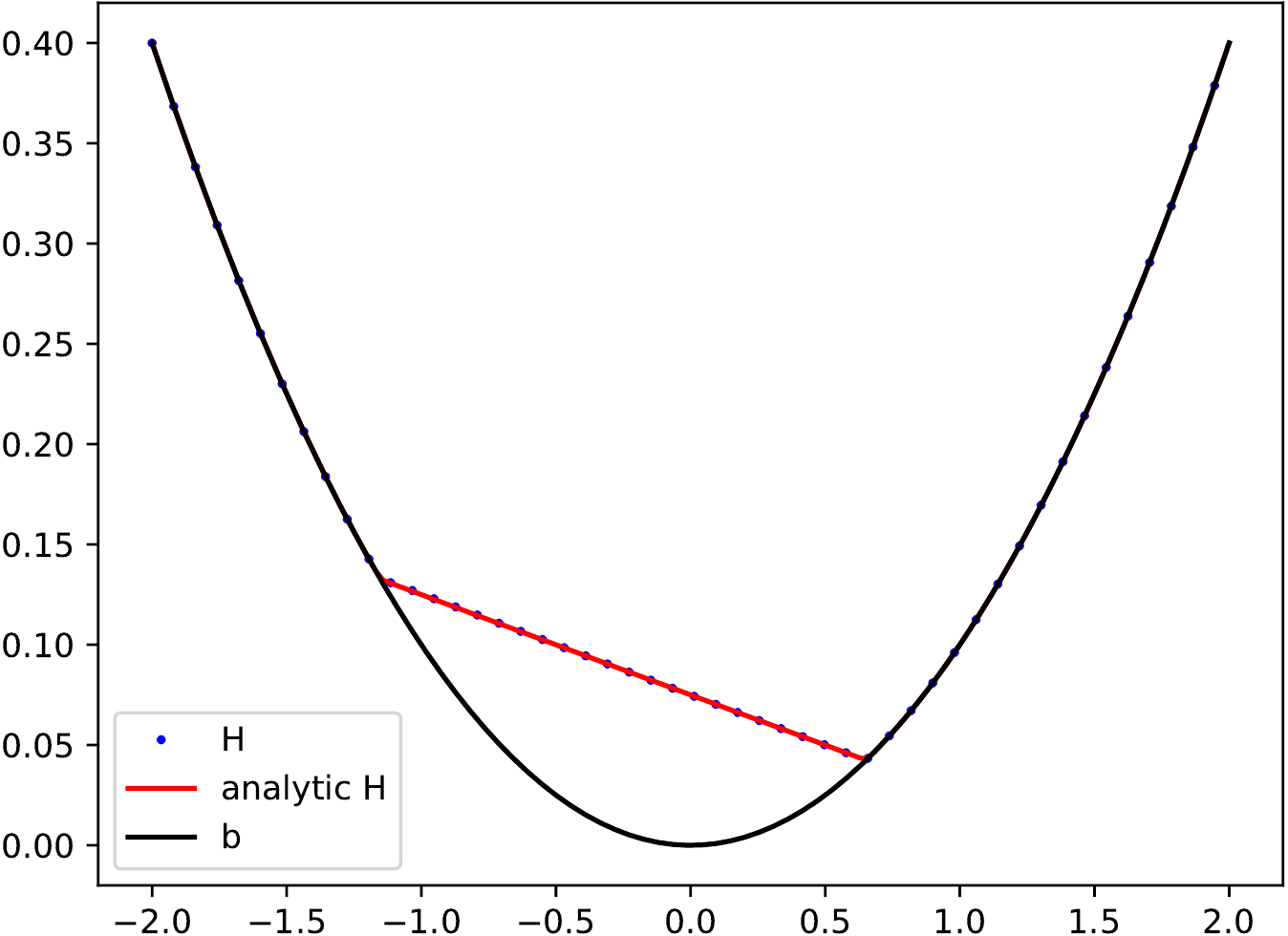}
    }
\subfloat[Viscous coefficient $\epsilon$, $t=T/3.0$]
    {
        \includegraphics[scale=0.4]{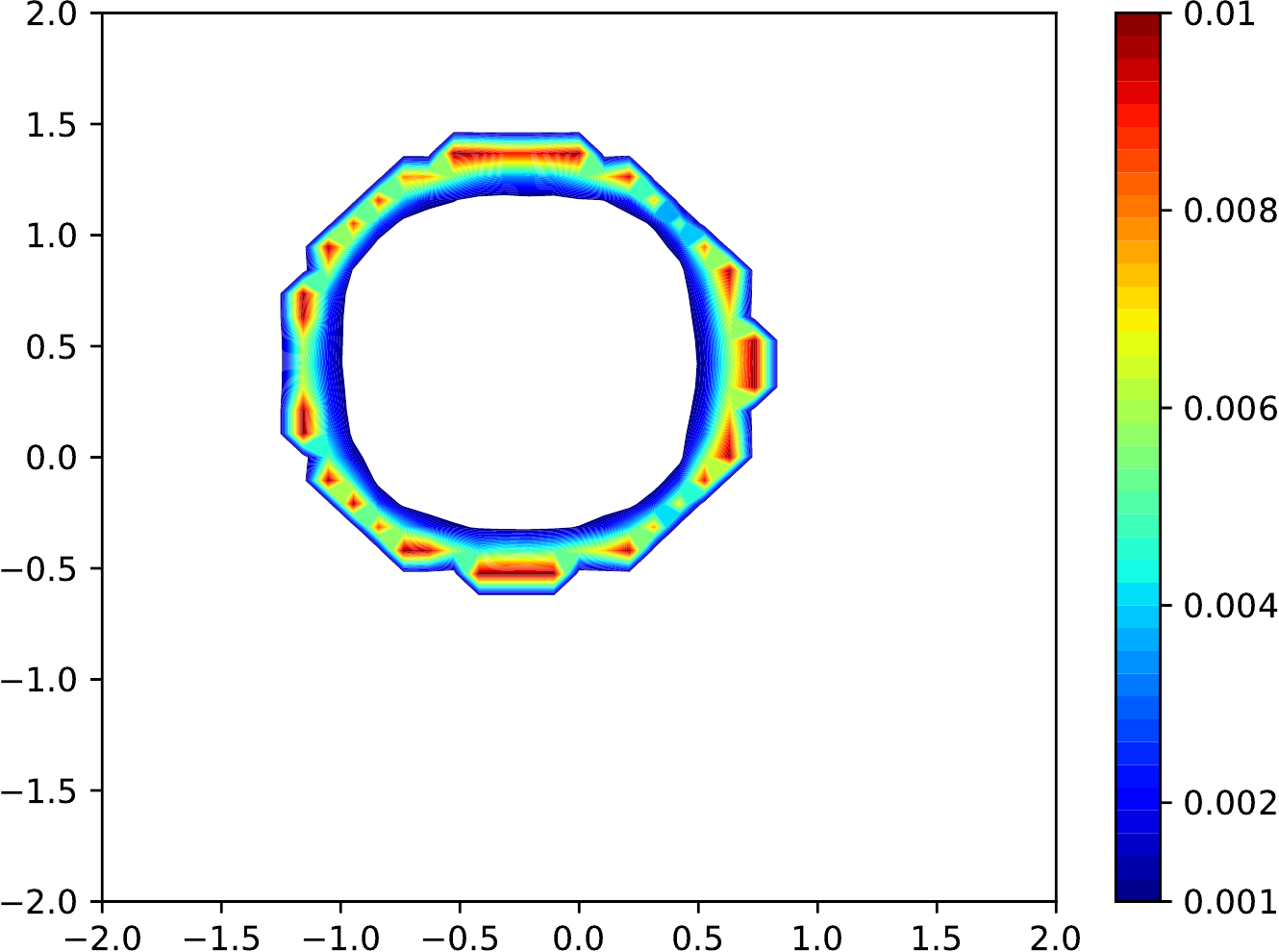}
    }
    \\
\subfloat[$H$-slice at $y=0$, $t=T/2.0$]
    {
        \includegraphics[scale=0.4]{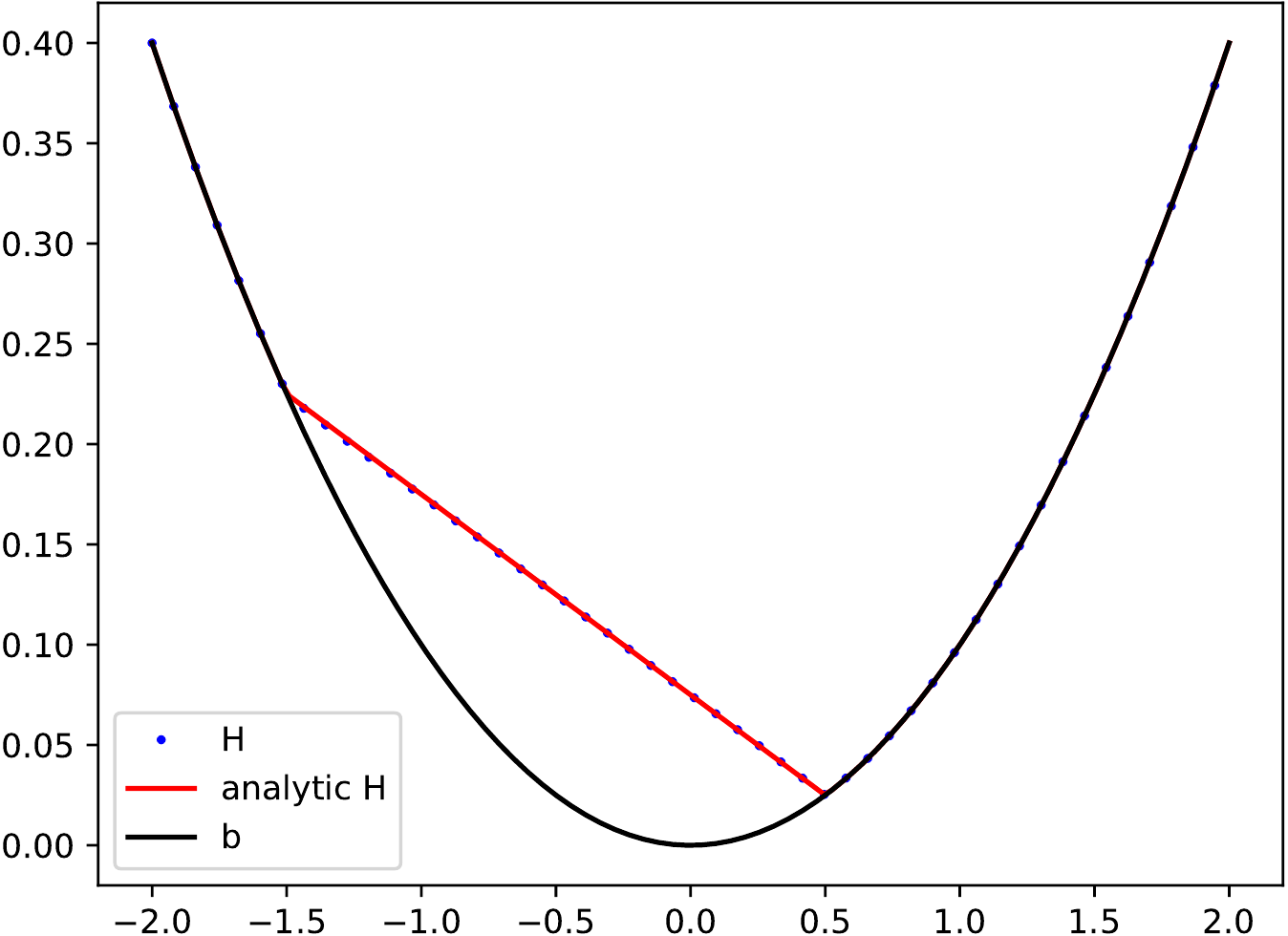}
    }
    \subfloat[Viscous coefficient $\epsilon$, $t=T/2.0$]
    {
        \includegraphics[scale=0.4]{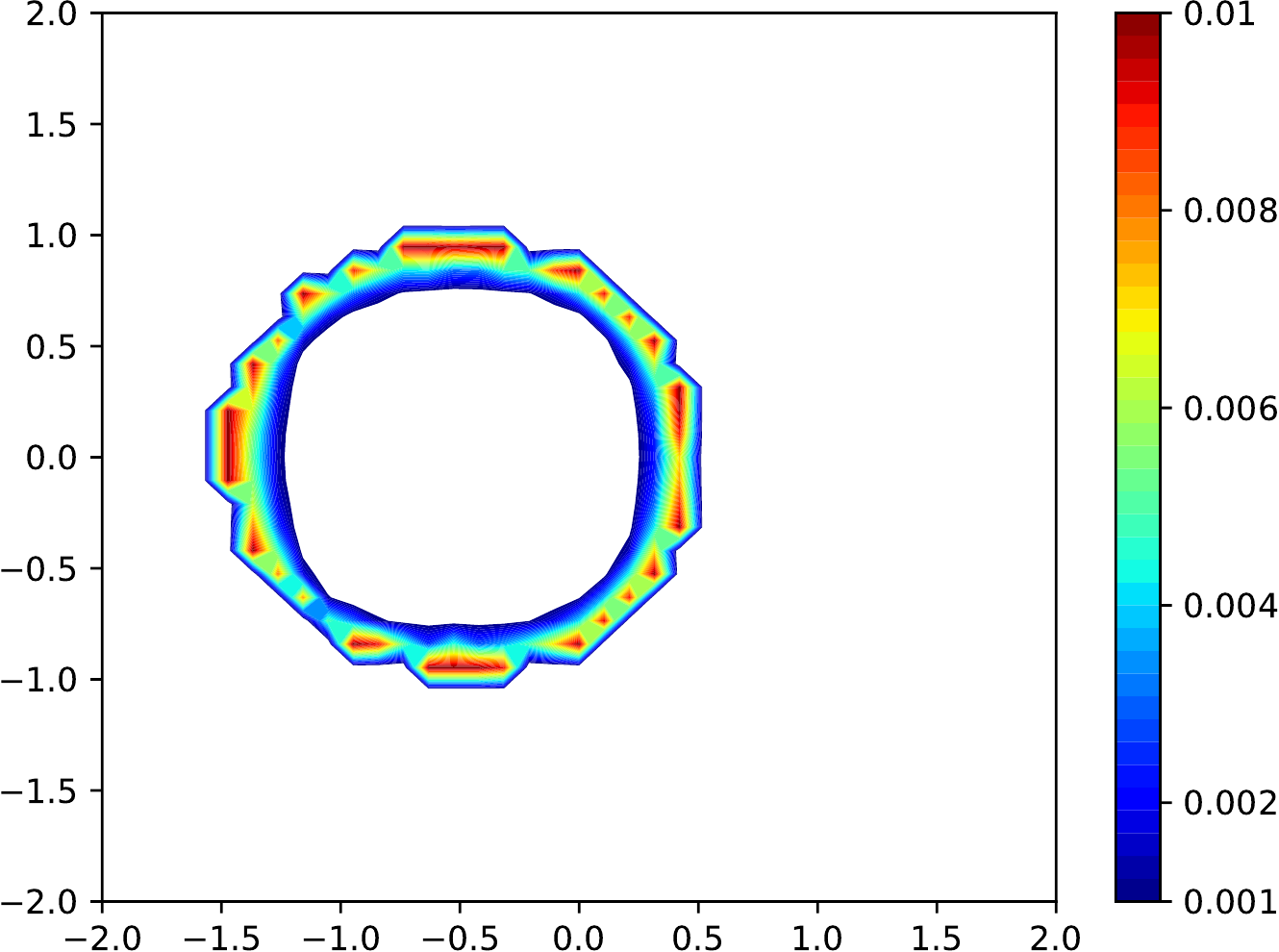}
    }
\\
\subfloat[$H$-slice at $y=0$, $t=2T$]
    {
        \includegraphics[scale=0.4]{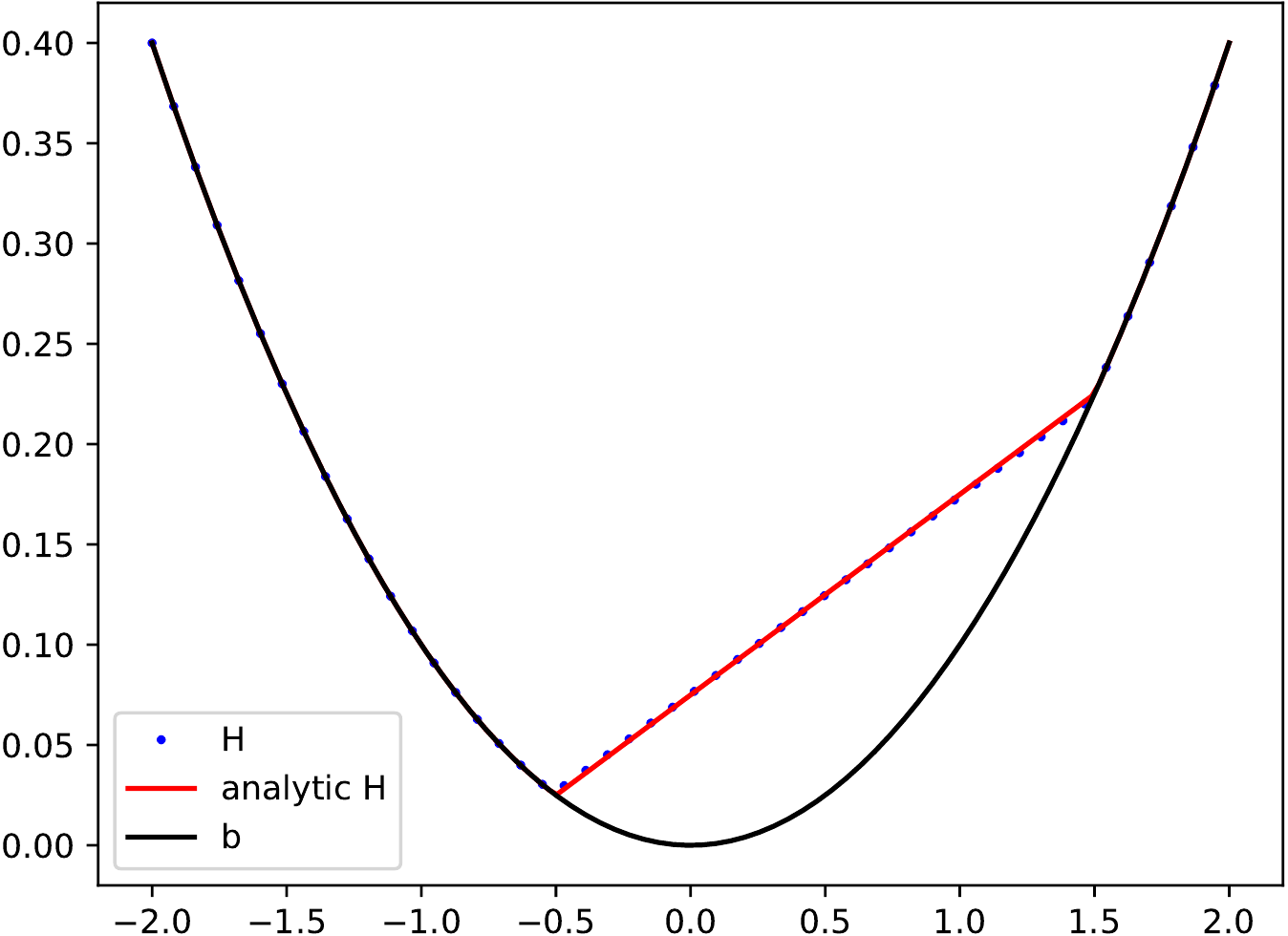}
    }
        \subfloat[Viscous coefficient $\epsilon$, $t=2T$]
    {
        \includegraphics[scale=0.4]{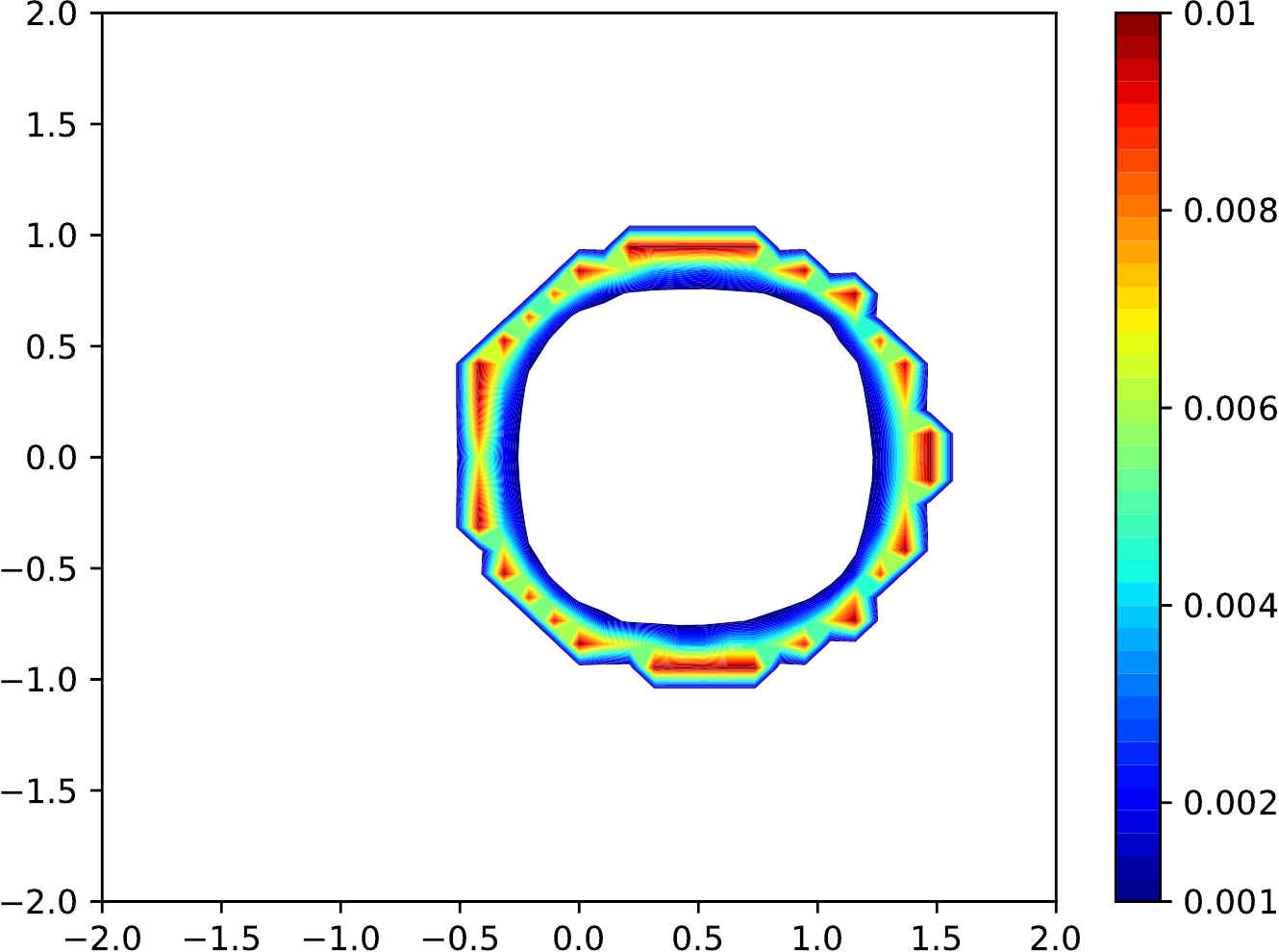}
    }
    \caption{ESDGSEM approximation with artificial viscosity and positivity limiter for the 2D oscillating lake at $N=3$.}
    \label{fig:OscillatingLake}
\end{figure}

\FloatBarrier

\subsection{Three Mound Dam Break}
We procceed with a more challenging test case to thoroughly test the shock capturing capabilities as well as the positivity preservation. A dam break is set up on the domain $\Omega = [0,75]\times[0,45]$ at $x=16$ with a water height of $h=1.875$ on the top and zero at the bottom. On the dry side of the dam are three mounds which will be partially flooded during the computation.
The water is initialized at rest 
\begin{equation}
\begin{aligned}
&h(x,y,0)= \left\{
\begin{aligned}
&1.875, \quad&\textrm{if } x < 16  \\
&0, \quad&\textrm{otherwise}
\end{aligned}
\right.,\\
&u=v=0.
\end{aligned}
\end{equation}
The three mounds on the down hill side are defined by
\begin{equation}
\begin{aligned}
M_1(x,y) &= 1 - 0.1 \sqrt{(x-30)^2 + (y-22.5)^2}, \\
M_2(x,y) &= 1 - 0.1 \sqrt{(x-30)^2 + (y-7.5)^2}, \\
M_3(x,y) &= 2.8 - 0.28 \sqrt{(x-47.5)^2 + (y-15)^2}, \\
\end{aligned}
\end{equation}
and the bottom topography is taken as the maximum elevation level
\begin{equation}
\begin{aligned}
\label{ThreeMoundBottom}
b(x,y) = \max (0, M_1(x,y),M_2(x,y),M_3(x,y))
\end{aligned}
\end{equation}
\begin{figure}[!ht]
   \centering
    \subfloat[$H$, $T=0.0$]
    {
        \includegraphics[width=0.5\textwidth]{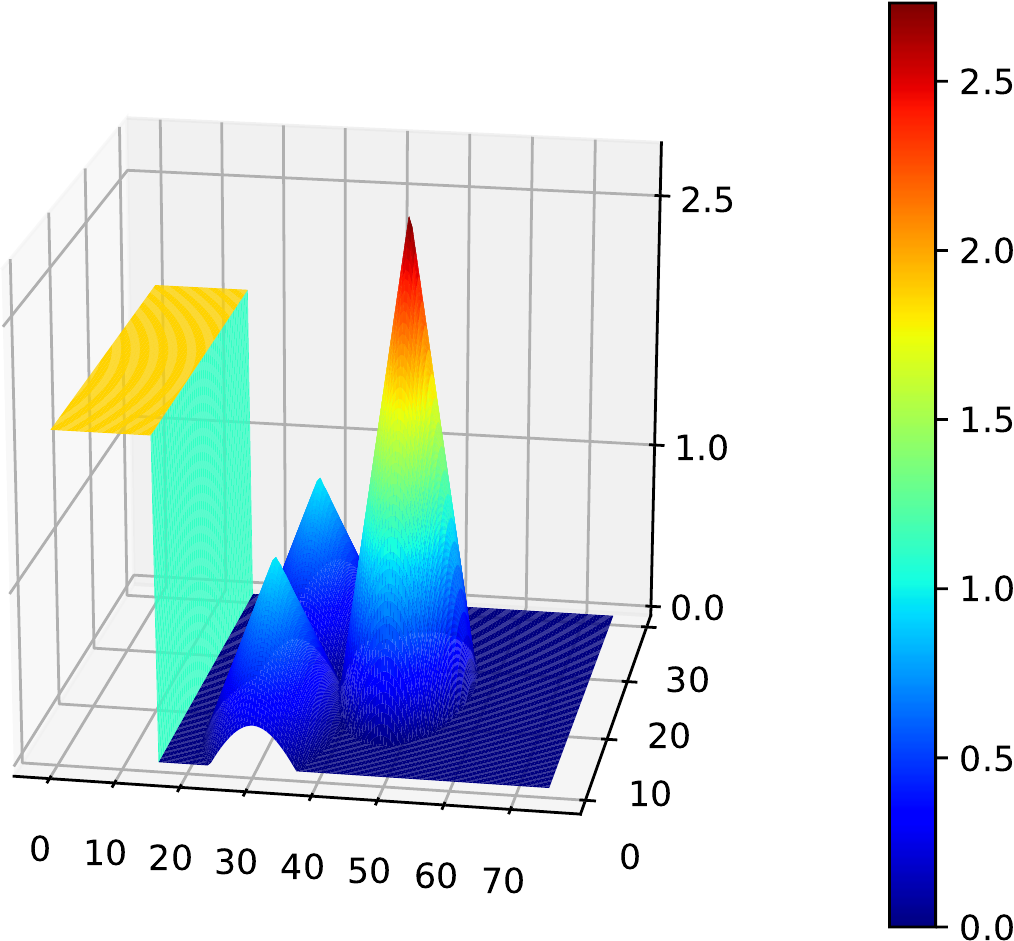}
    }
     \caption{Initial condition for the dam break over three mounds $N=3$.}
    \label{fig:ThreeMoundDamBreak1}
\end{figure}       
The set up uses solid wall boundary conditions on all four sides. We use a Cartesian mesh with $150\times 100$ elements. Due to the combination of a strong shock and a dry area including varying bottom elevations, the base viscosity parameter is set to be $\epsilon_0 = 0.2$. The gravitational constant is set to $g=9.81$. We show the total water height at various times in Figure \ref{fig:ThreeMoundDamBreak1} and Figure \ref{fig:ThreeMoundDamBreak2}. Also included in these figures is the dynamic viscous coefficient $\epsilon$. These plots show that the shock capturing mechanism accurately tracks the shock fronts across the domain and around the three mounds.
\begin{figure}[!ht]
   \centering
    \subfloat[$H$, $T=5.0$]
    {
        \includegraphics[width=0.5\textwidth]{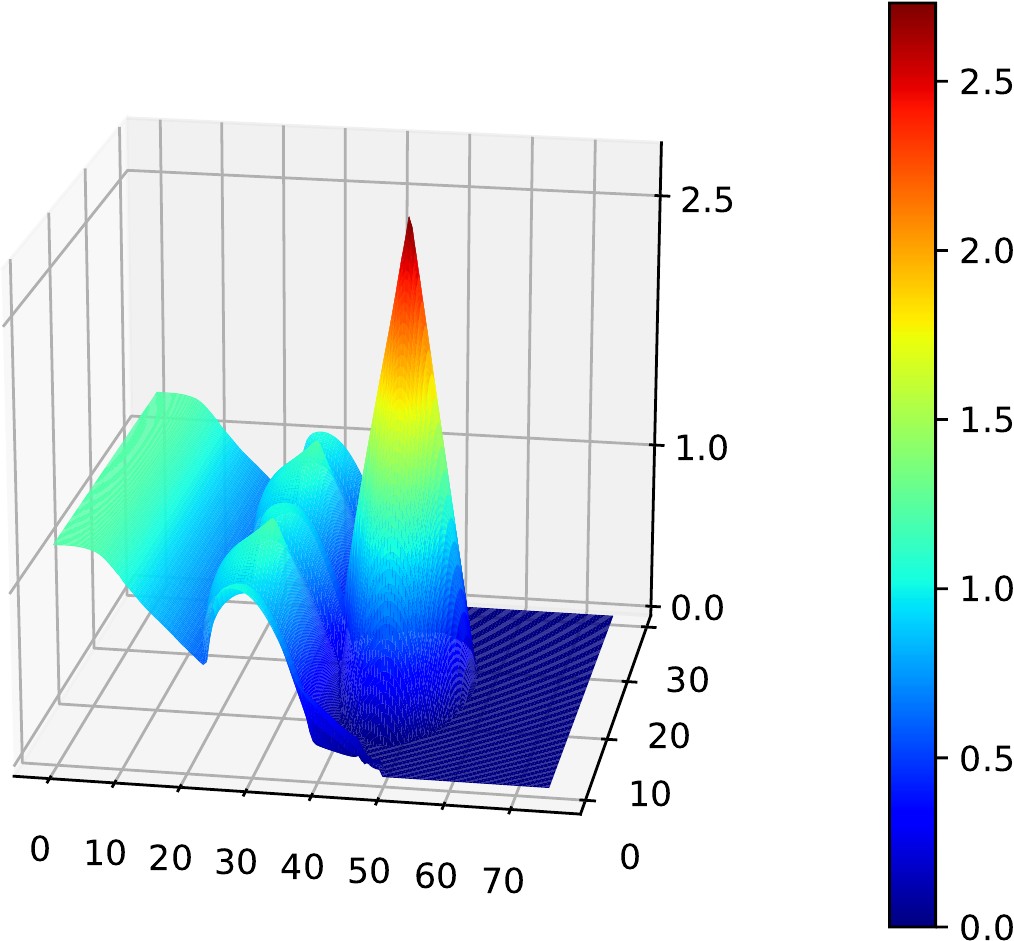}
    }
    \subfloat[Viscous coefficient $\epsilon$, $T=5.0$]
    {
        \includegraphics[width=0.5\textwidth]{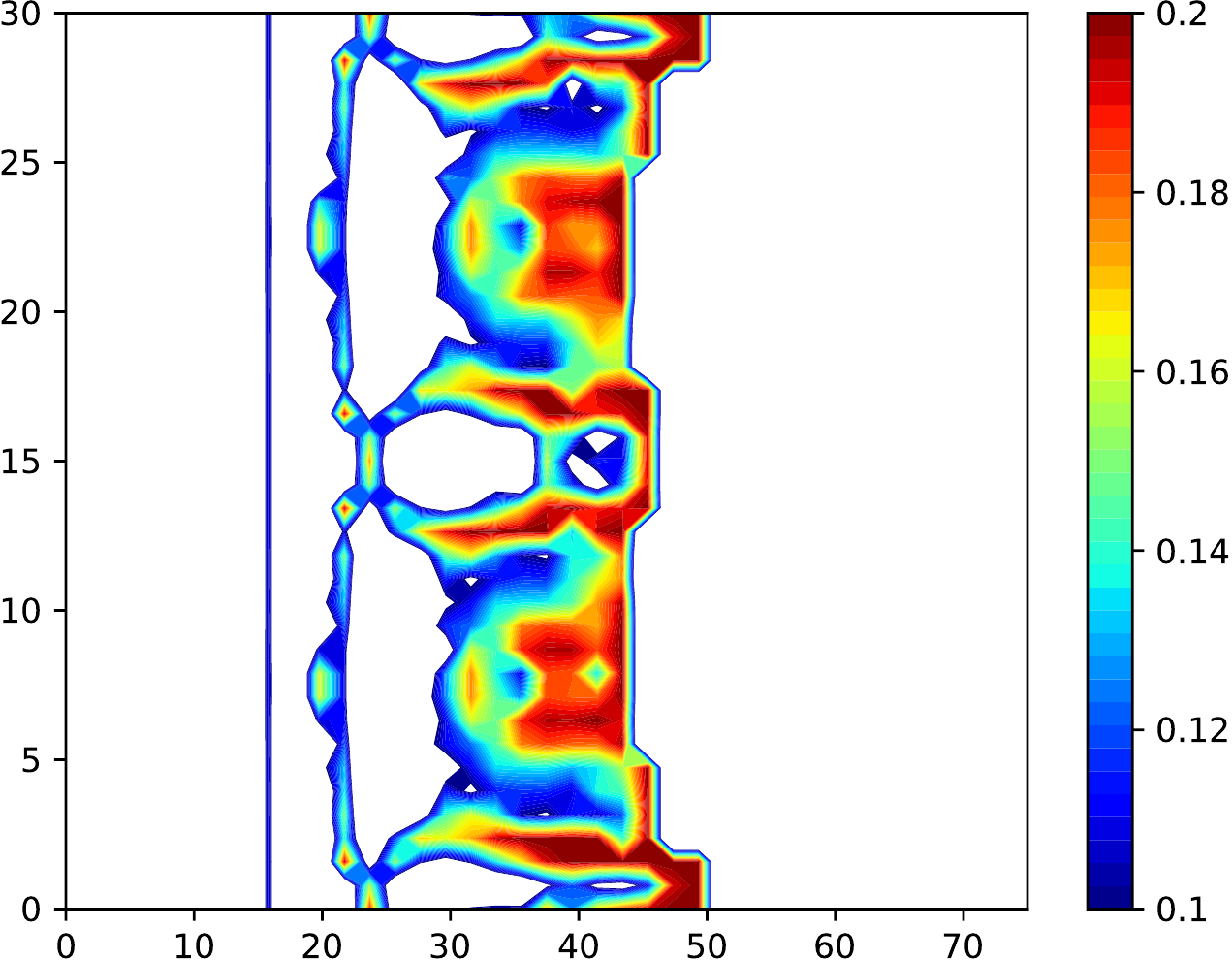}
    }
    \\
     \subfloat[$H$, $T=10.0$]
    {
    \includegraphics[width=0.5\textwidth]{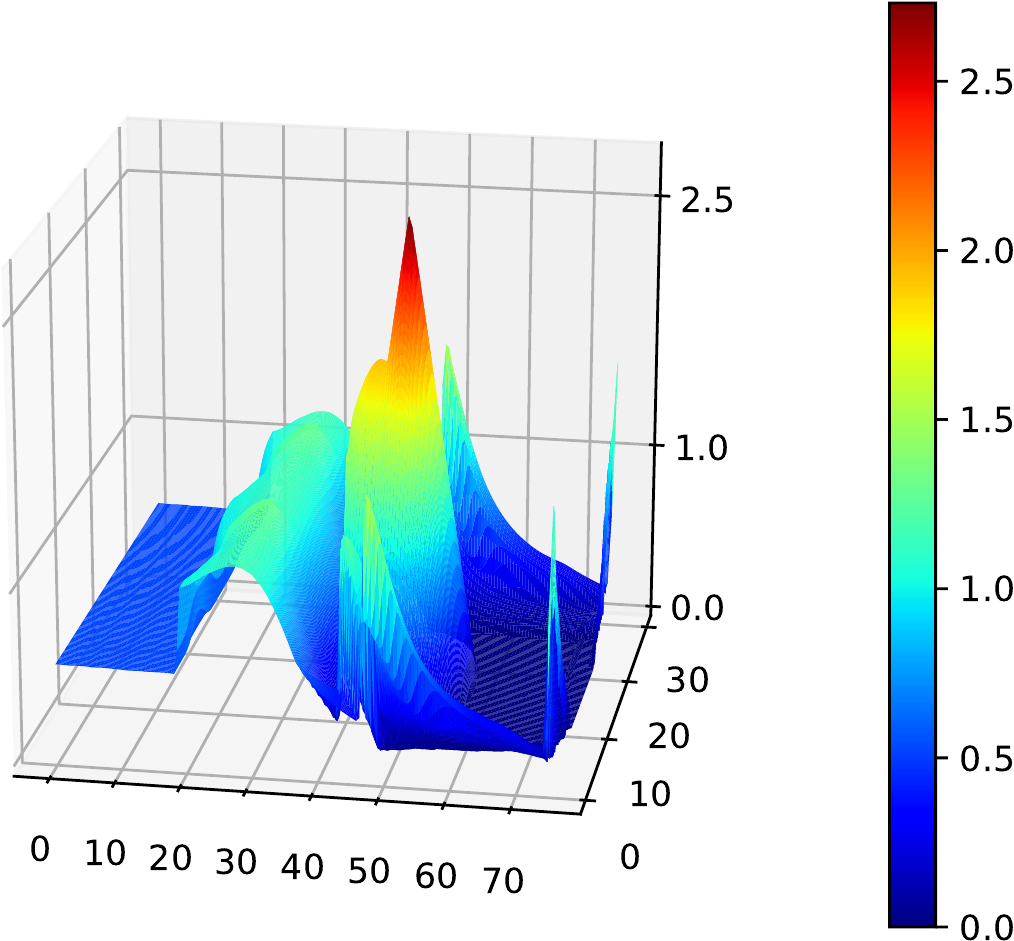}
    }
        \subfloat[Viscous coefficient $\epsilon$, $T=10.0$]
    {
        \includegraphics[width=0.5\textwidth]{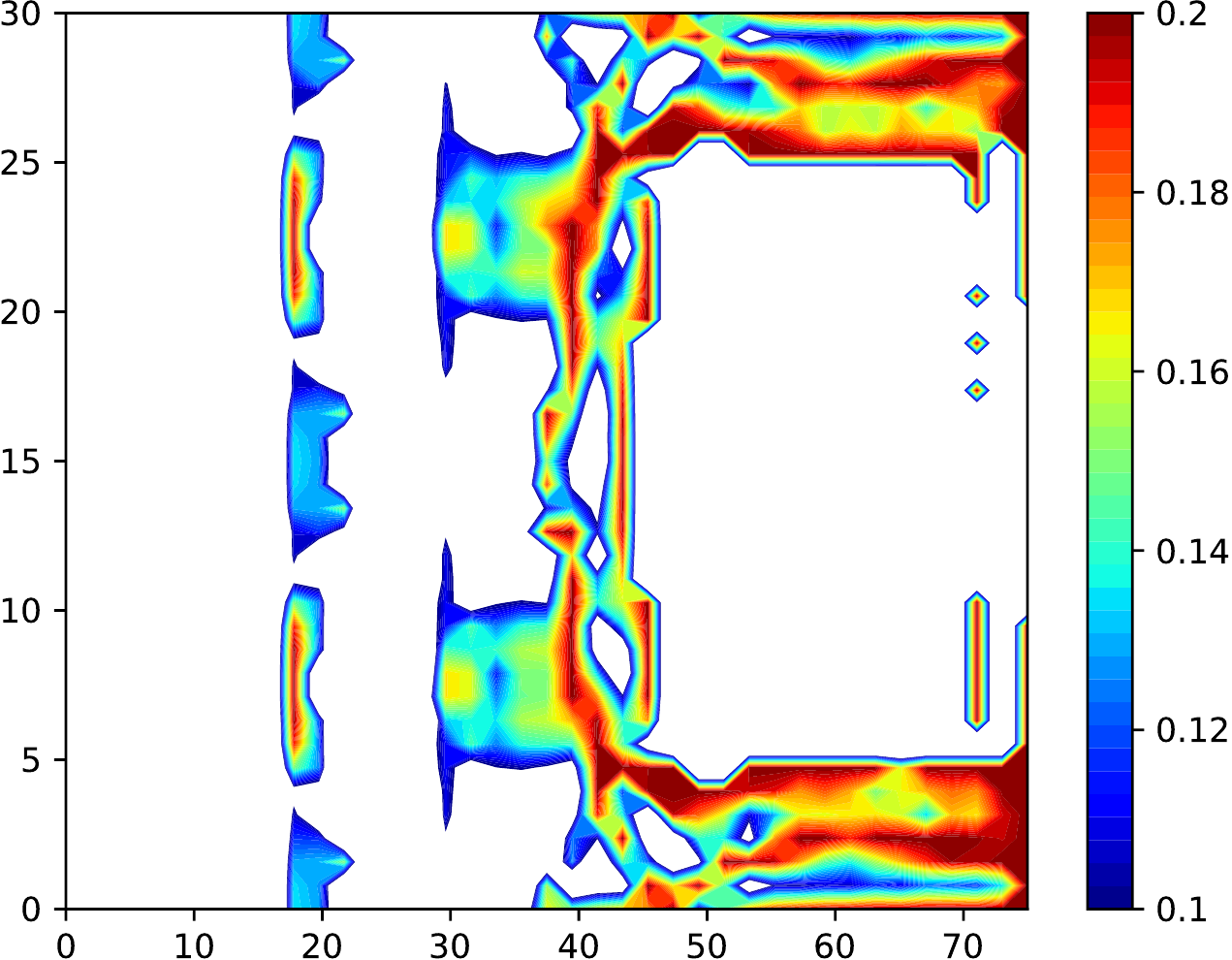}
    }
     \caption{ESDGSEM approximation with artificial viscosity and positivity limiter for the dam break over three mounds $N=3$.}
    \label{fig:ThreeMoundDamBreak2}
\end{figure}   
    \begin{figure}[!ht]
   \centering
\subfloat[$H$, $T=20.0$]
    {
        \includegraphics[width=0.5\textwidth]{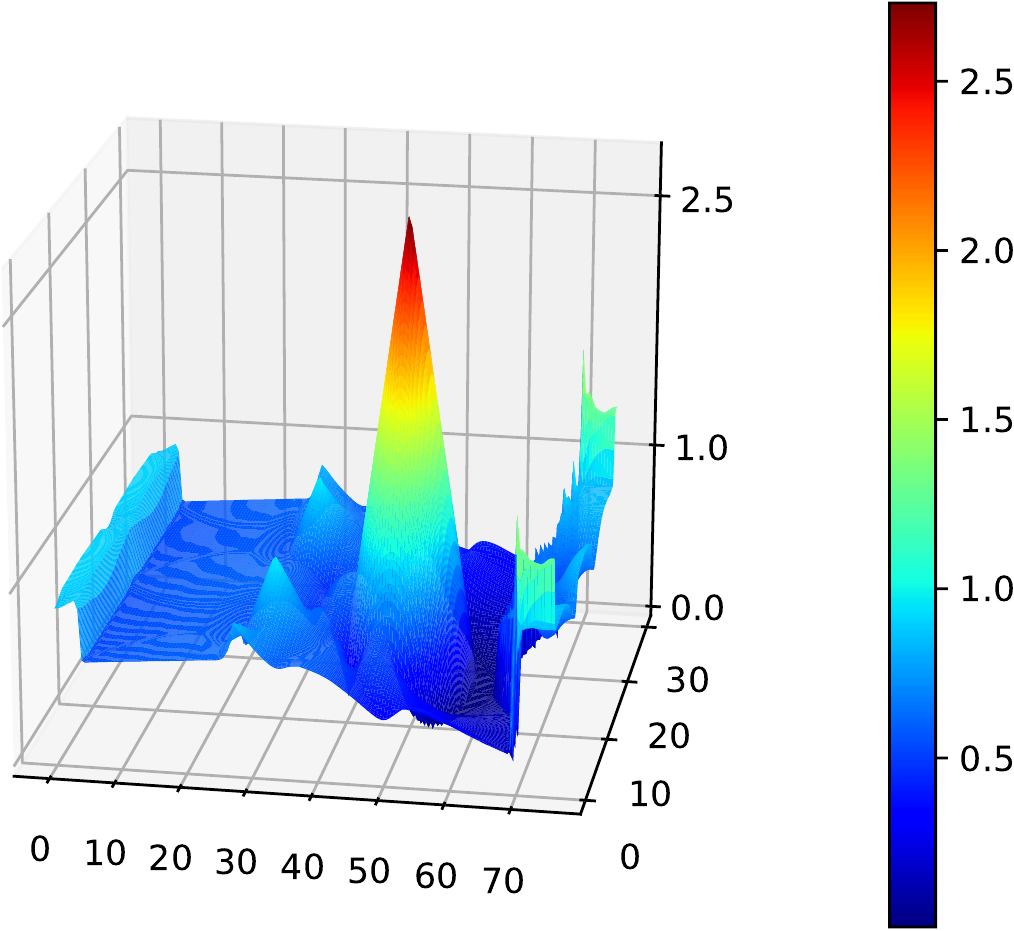}
    }
        \subfloat[Viscous coefficient $\epsilon$, $T=20.0$]
    {
        \includegraphics[width=0.5\textwidth]{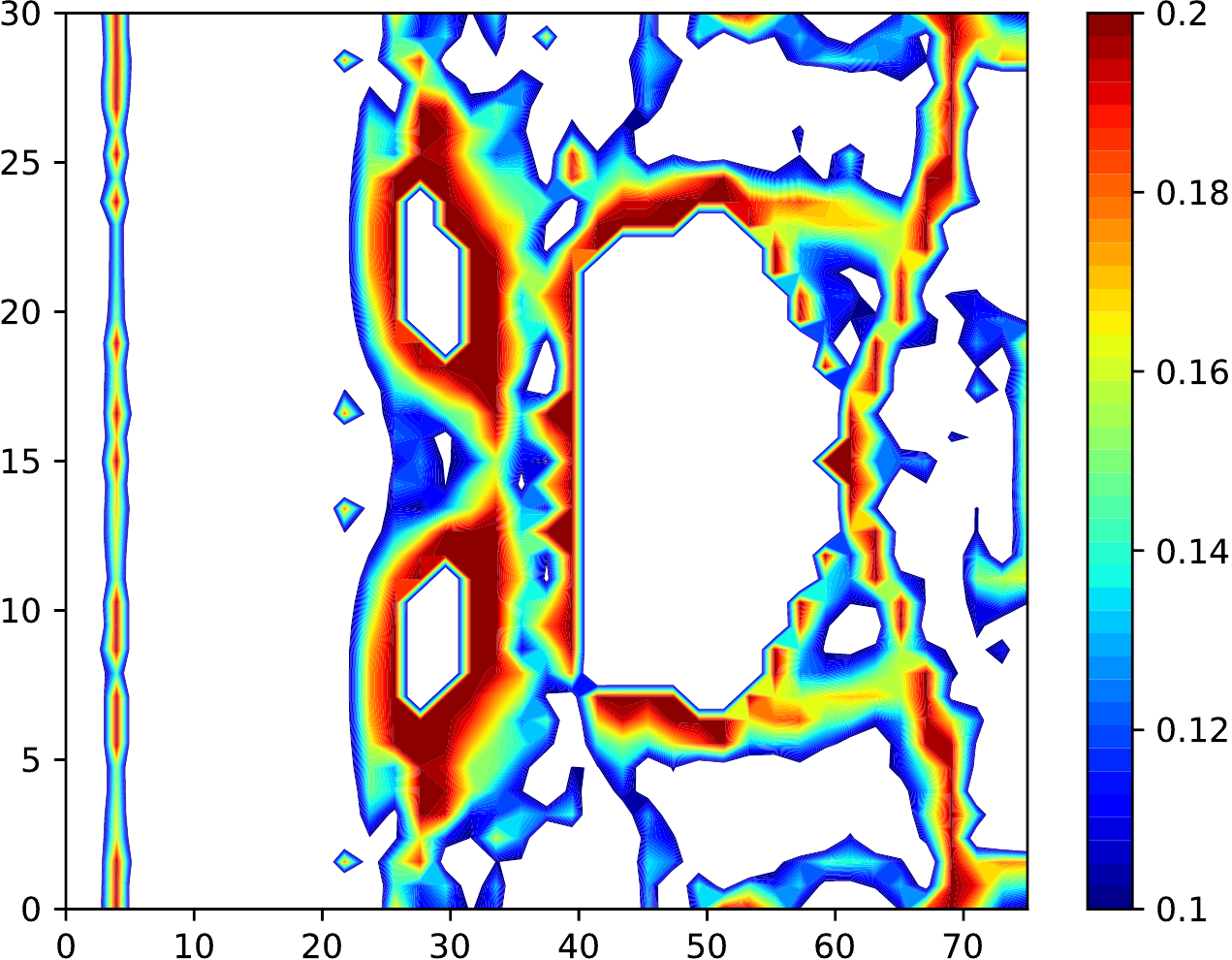}
    }
    \\
    \subfloat[$H$, $T=30.0$]
    {
        \includegraphics[width=0.5\textwidth]{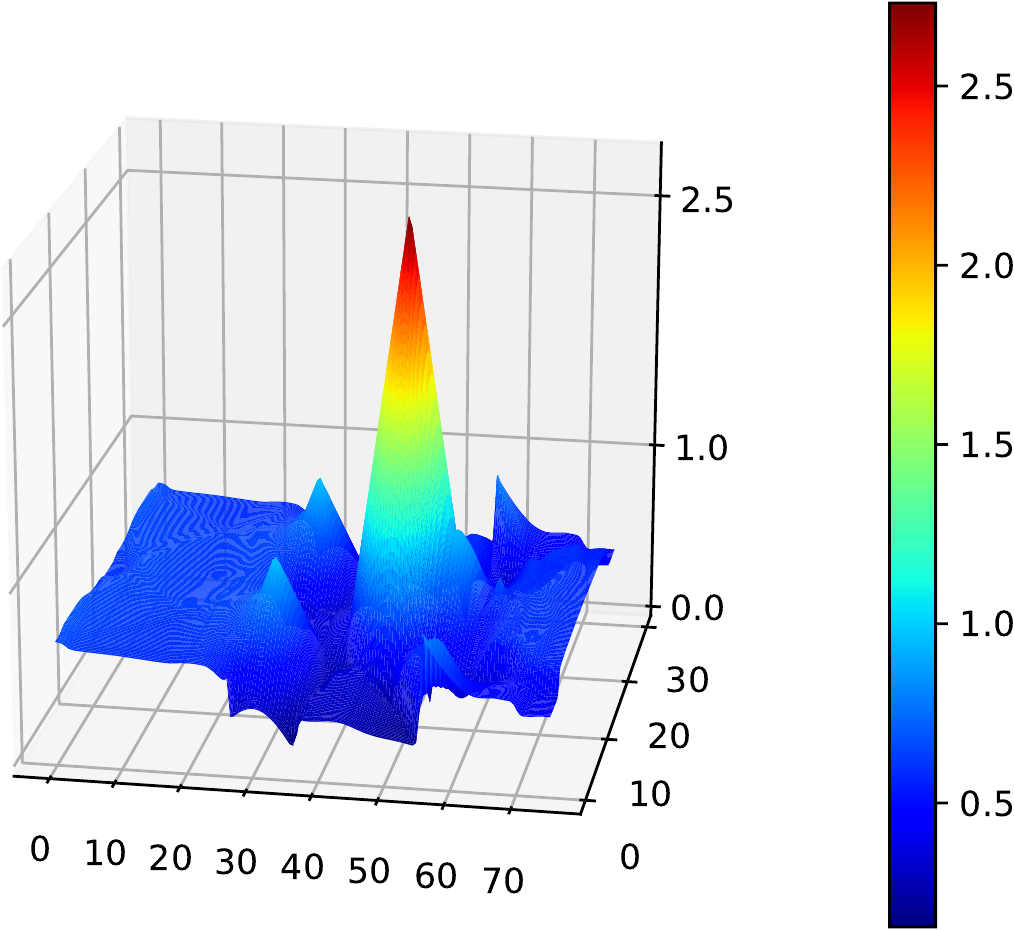}
    }
            \subfloat[Viscous coefficient $\epsilon$, $T=30.0$]
    {
        \includegraphics[width=0.5\textwidth]{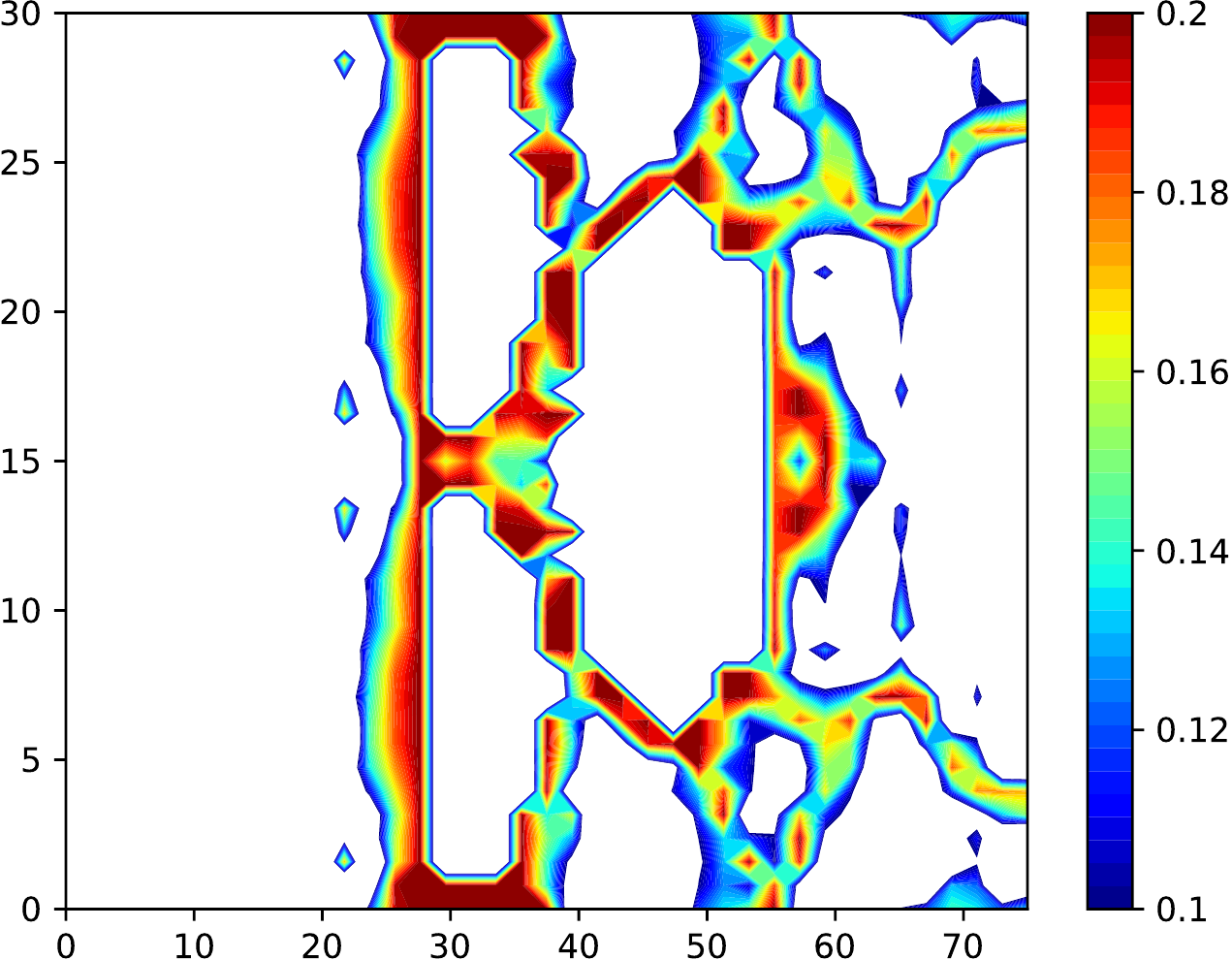}
    }
    \\
    \caption{ESDGSEM approximation with artificial viscosity and positivity limiter for the dam break over three mounds $N=3$.}
    \label{fig:ThreeMoundDamBreak3}
\end{figure}

    \begin{figure}[!ht]
   \centering
\subfloat[$H$, $T=40.0$]
    {
        \includegraphics[width=0.5\textwidth]{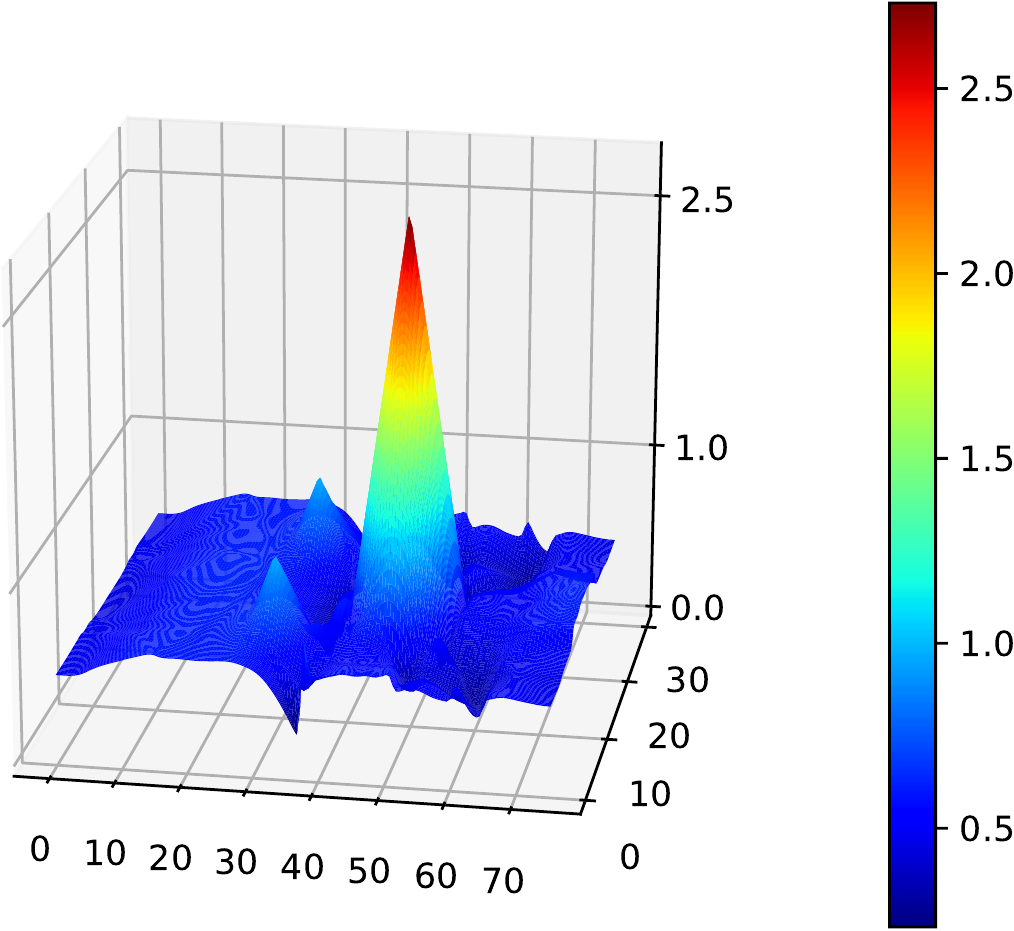}
    }
                \subfloat[Viscous coefficient $\epsilon$, $T=40.0$]
    {
        \includegraphics[width=0.5\textwidth]{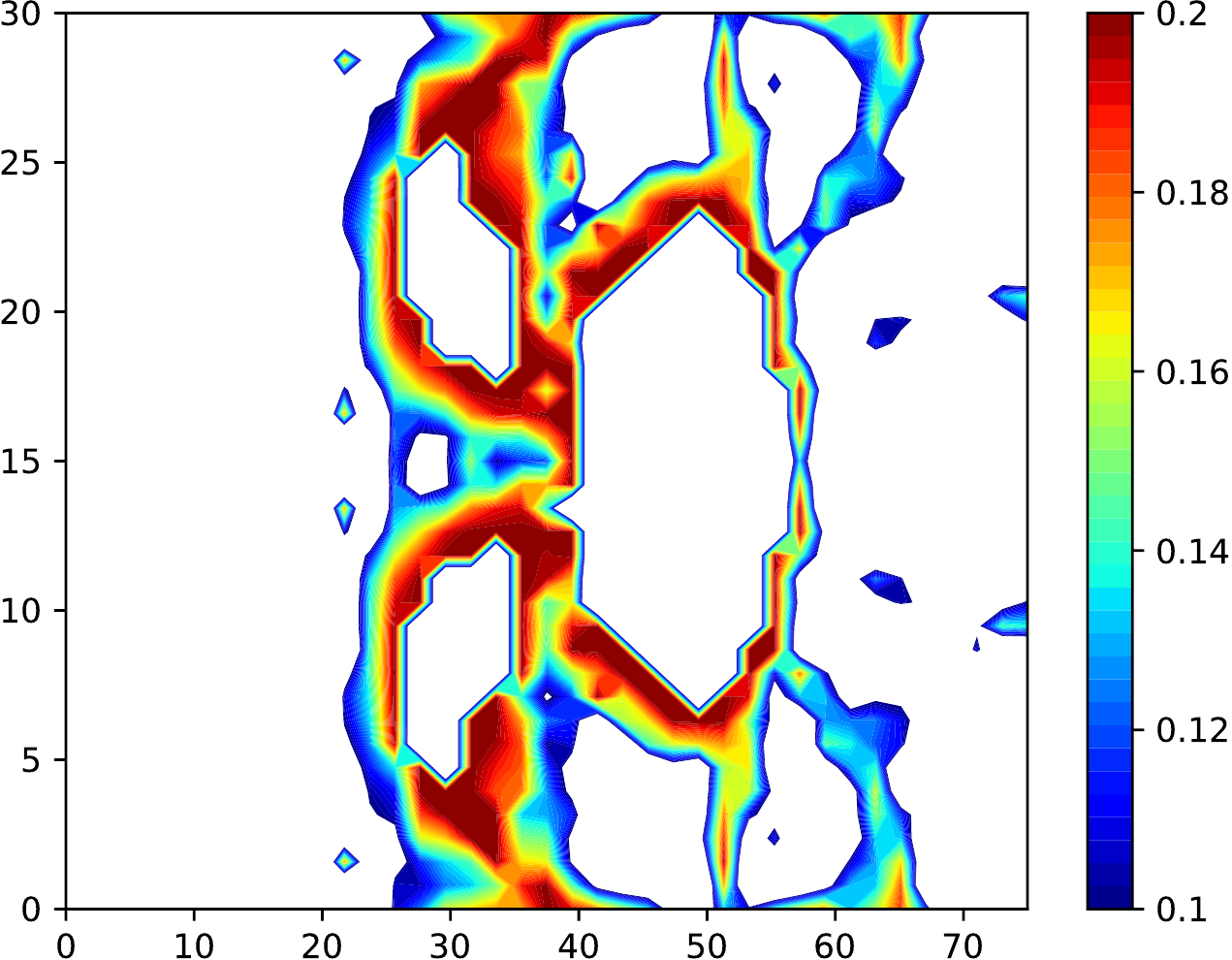}
    }\\
    \subfloat[$H$, $T=50.0$]
    {
        \includegraphics[width=0.5\textwidth]{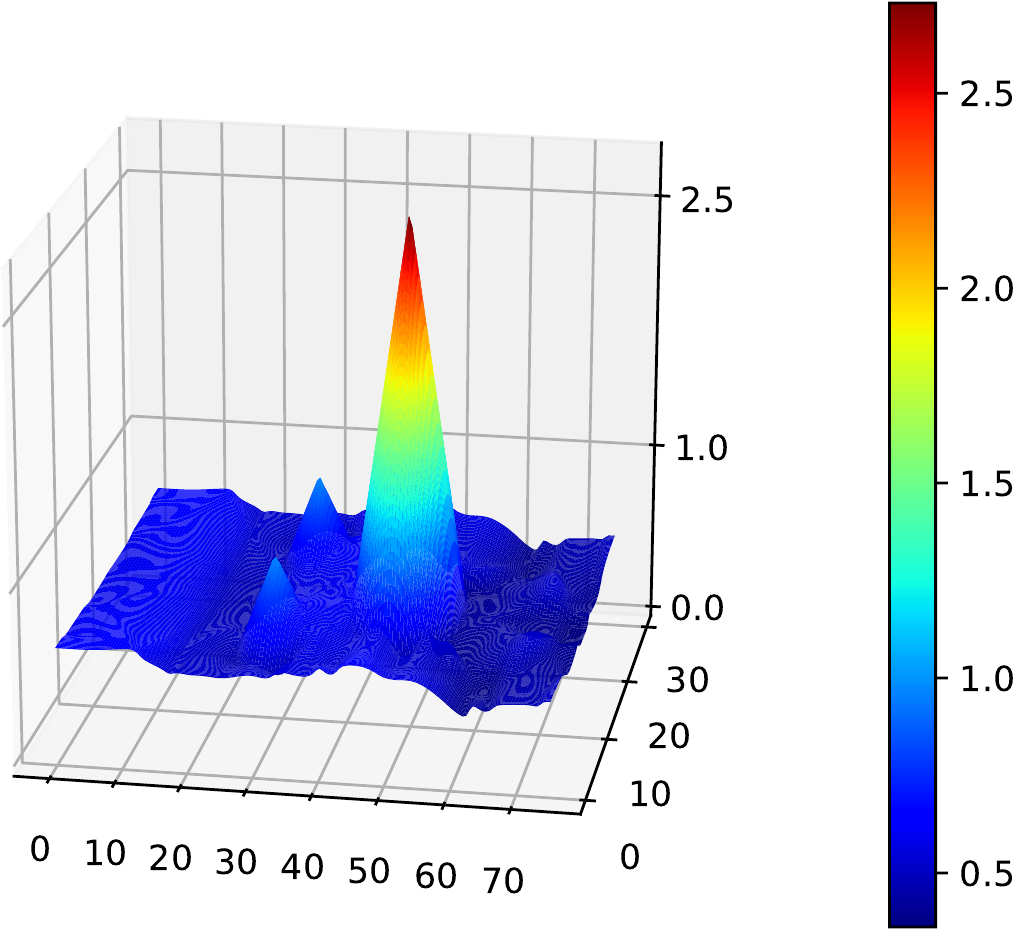}
    }
                \subfloat[Viscous coefficient $\epsilon$, $T=50.0$]
    {
        \includegraphics[width=0.5\textwidth]{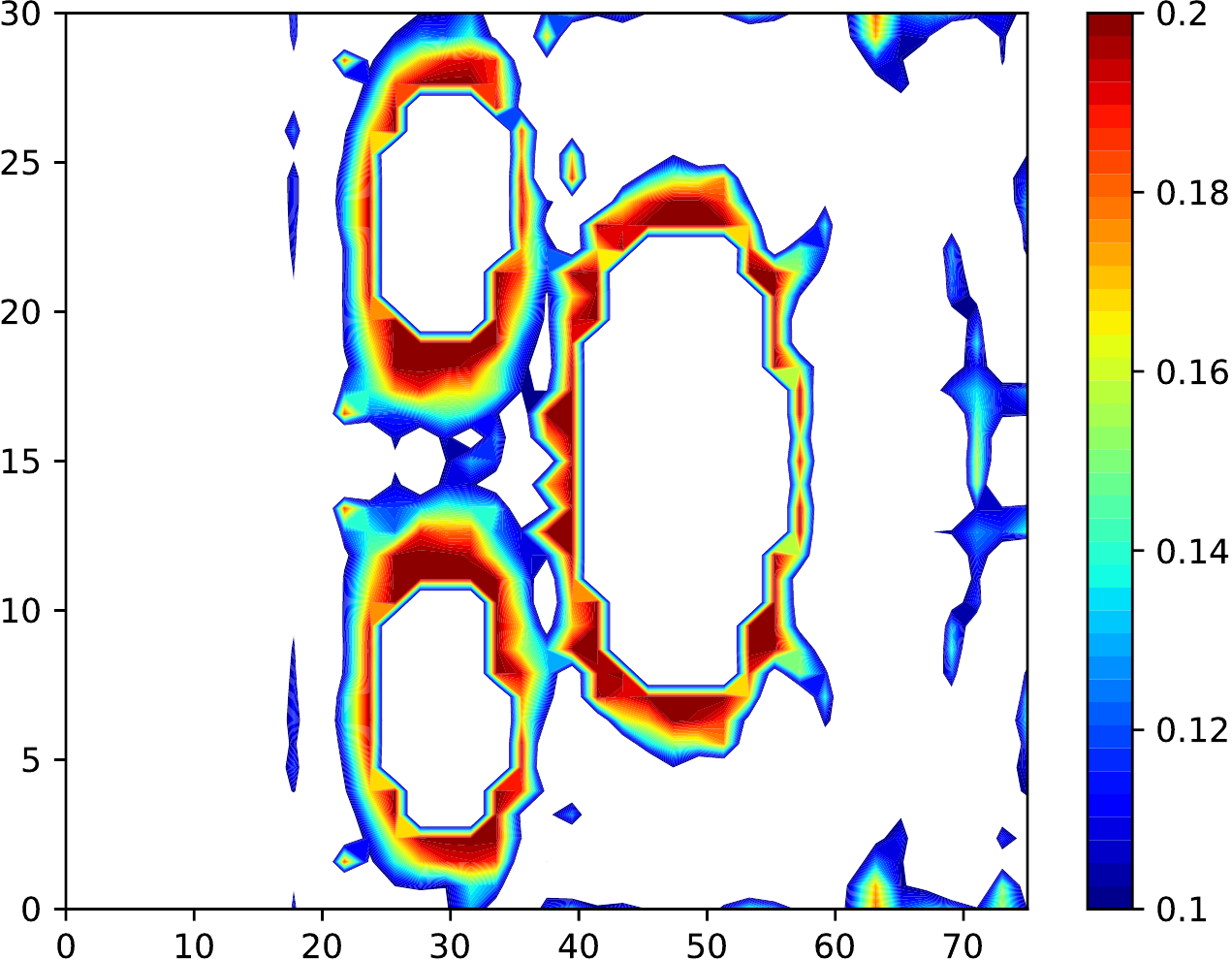}
    }
    \caption{ESDGSEM approximation with artificial viscosity and positivity limiter for the dam break over three mounds $N=3$.}
    \label{fig:ThreeMoundDamBreak4}
\end{figure}

\FloatBarrier

\subsection{Solitary wave runup on a conical island}
We examine the runup of a wave on the domain $\Omega = [0,25] \times [0,30]$ with a partly dry conical island in the center of the domain. The wave flows around the island, is reflected at the far end and flows back around it. This test case was previously studied numerically in \cite{Marras_GalerkinViscSW,synolakis1987runup} and experimentally in \cite{briggs1995laboratory}.
The initial wave $\eta$ defined by
\begin{equation}
    \eta(x,y,0) = \frac{A}{h_0}  \text{sech}^2 (\gamma (x-x_c)),
\end{equation} 
is set on top a flat water level of $h_0=0.32$, leading to initial conditions of
\begin{equation}
\begin{aligned}
&h(x,y,0)=\max \left( 0, h_0 + \eta(x,y,0) -b(x,y) \right),\\
&u(x,y,0)= \eta(x,y,0) \sqrt{\frac{g}{h_0}} \\
&v=0,
\end{aligned}
\end{equation}
where the parameters are set to $A=0.064m$, $x_c = 2.5m$, $\gamma = \sqrt{\frac{3A}{4h_0}}$. The bottom topography is a cone and defined by
\begin{equation}
b(x,y) = 0.93 \left(1-\frac{r}{r_c}\right)	\qquad \text{ if } r\leq r_c
\end{equation}
with $r=\sqrt{(x-x_c)^2 + (y-y_c)}$,  $r_c = 3.6m$ and $(x_c,y_c) = (12.5,15)$.  The domain $\Omega$ is bounded by solid walls everywhere. The test is run up to a final time of $T=50$. We use a uniform Cartesian mesh with varying spatial resolutions and show the results in Figure \ref{fig:WaveRunup}. The base viscosity parameter is set to be $\epsilon_0=0.1$.
\begin{figure}[!ht]
   \centering
    \subfloat[Total water height $H$, $50\times 50$ Elements]
    {
        \includegraphics[width=0.5\textwidth]{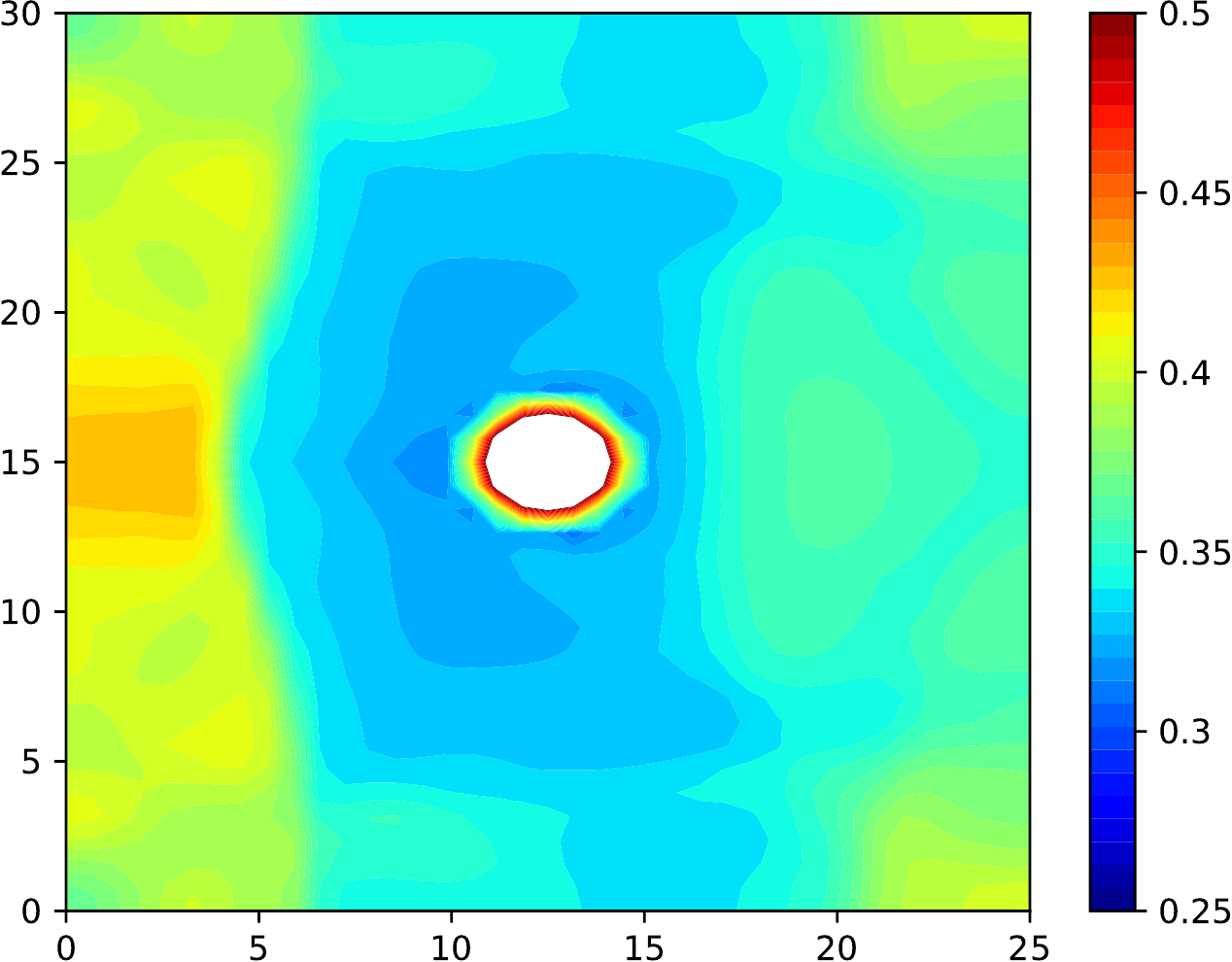}
    }
    \subfloat[Total water height $H$, $200\times 200$ Elements]
    {
        \includegraphics[width=0.5\textwidth]{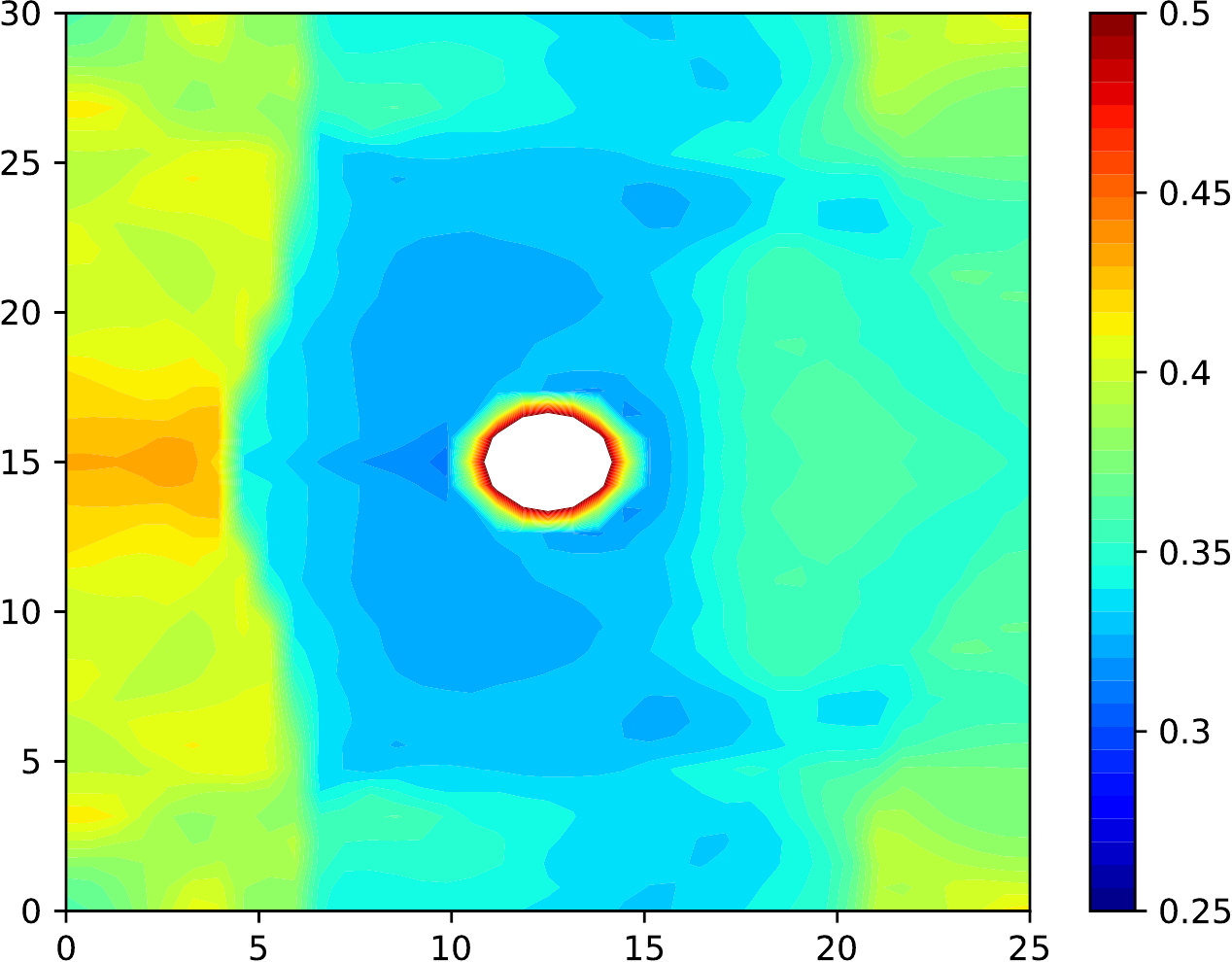}
    }
        \\
    \subfloat[Viscous coefficient $\epsilon$, $50\times 50$ Elements]
    {
        \includegraphics[width=0.5\textwidth]{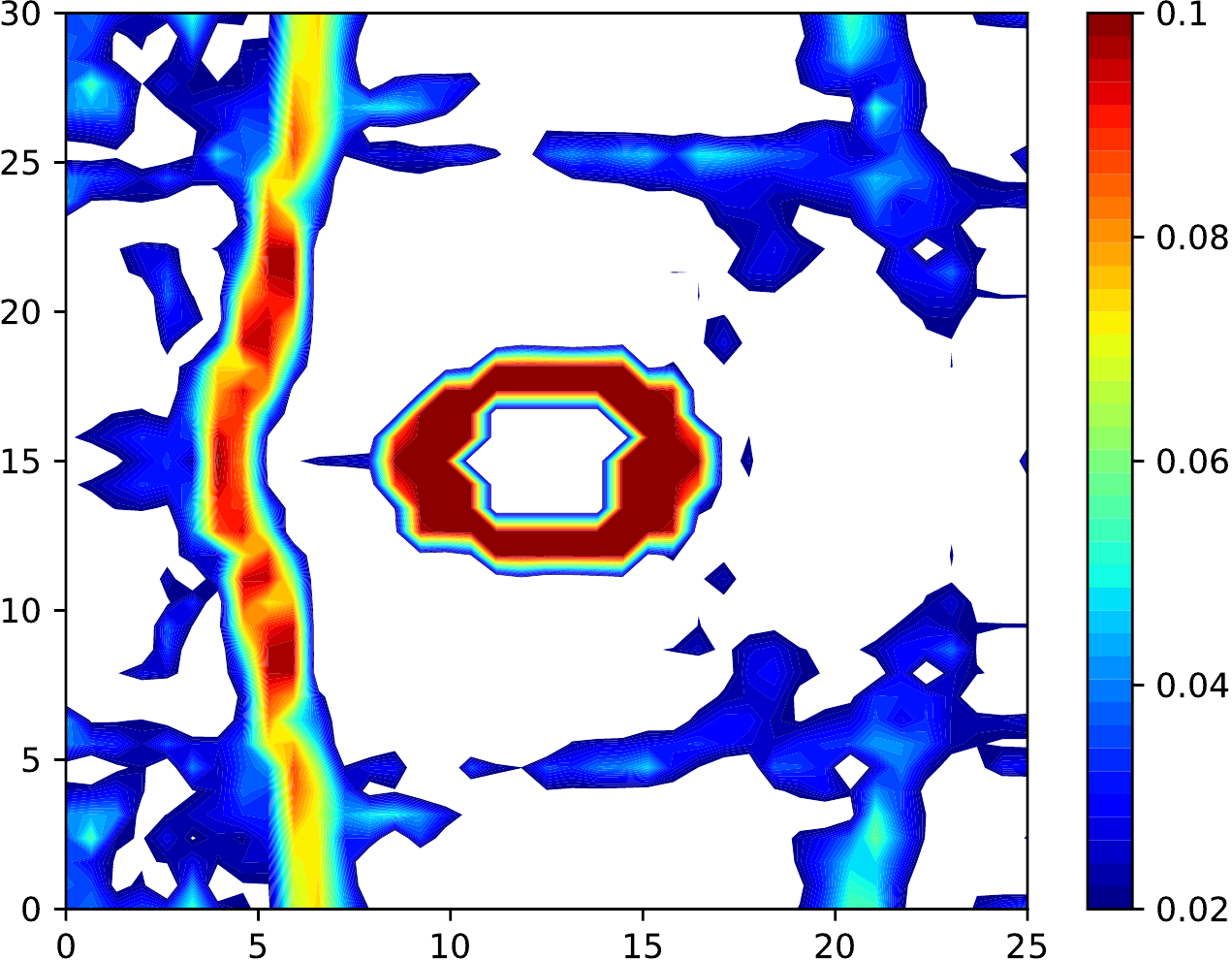}
    }
    \subfloat[Viscous coefficient $\epsilon$, $200\times 200$ Elements]
    {
        \includegraphics[width=0.5\textwidth]{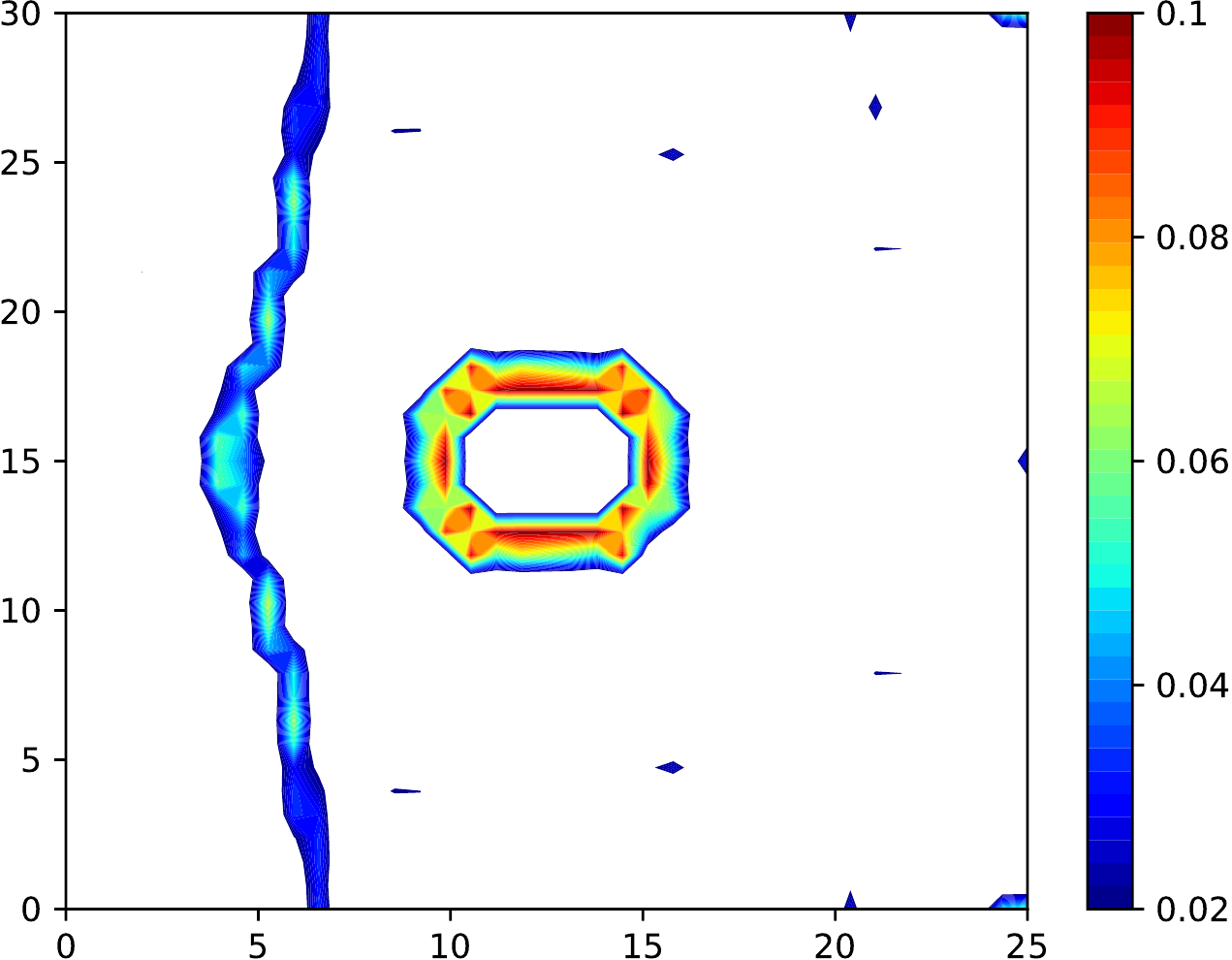}
    }
    \\
    \subfloat[$H$-Slice at $y=15$, $50\times 50$ Elements]
    {
        \includegraphics[width=0.5\textwidth]{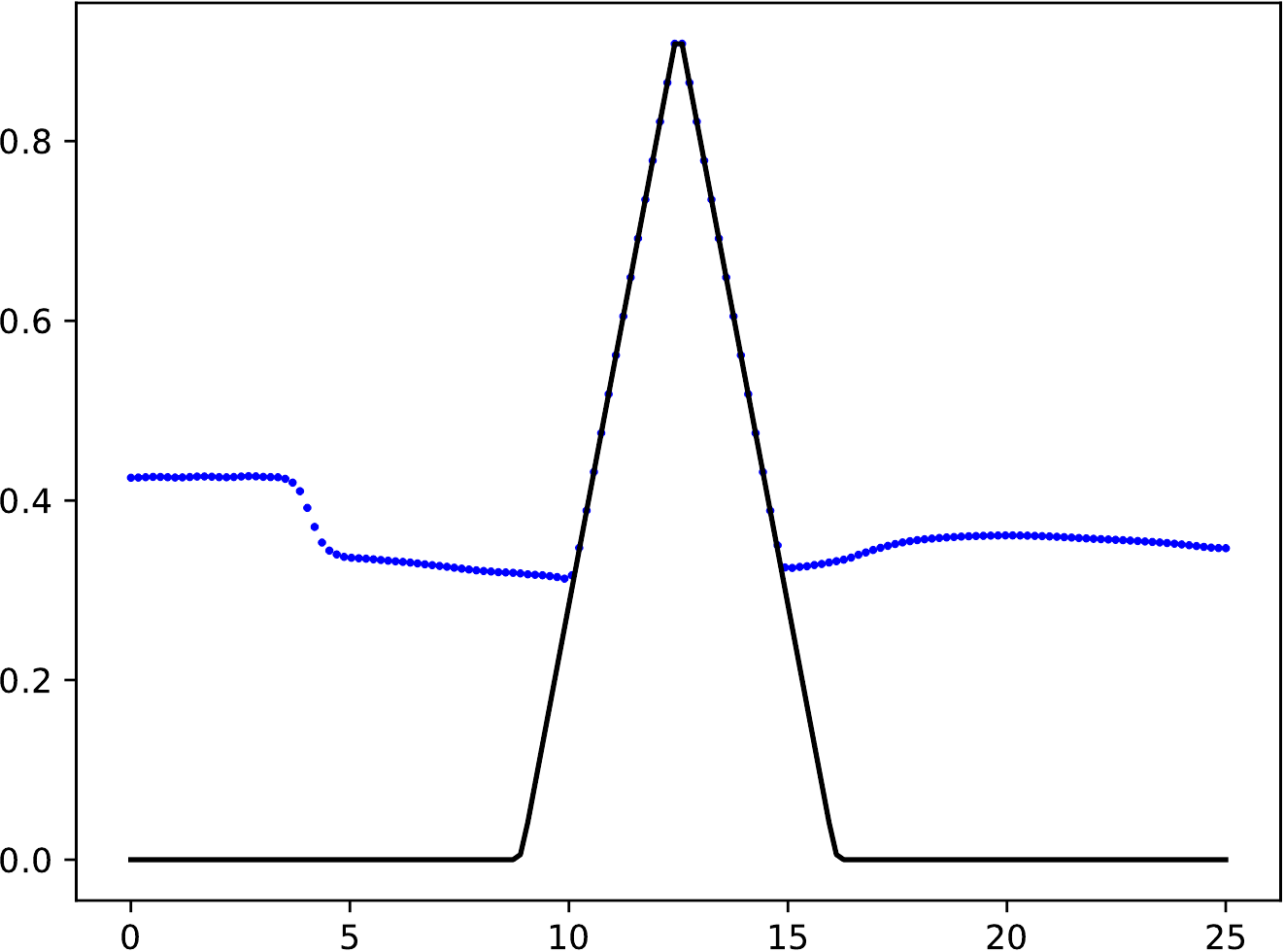}
    }
    \subfloat[$H$-Slice at $y=15$, $200\times 200$ Elements]
    {
        \includegraphics[width=0.5\textwidth]{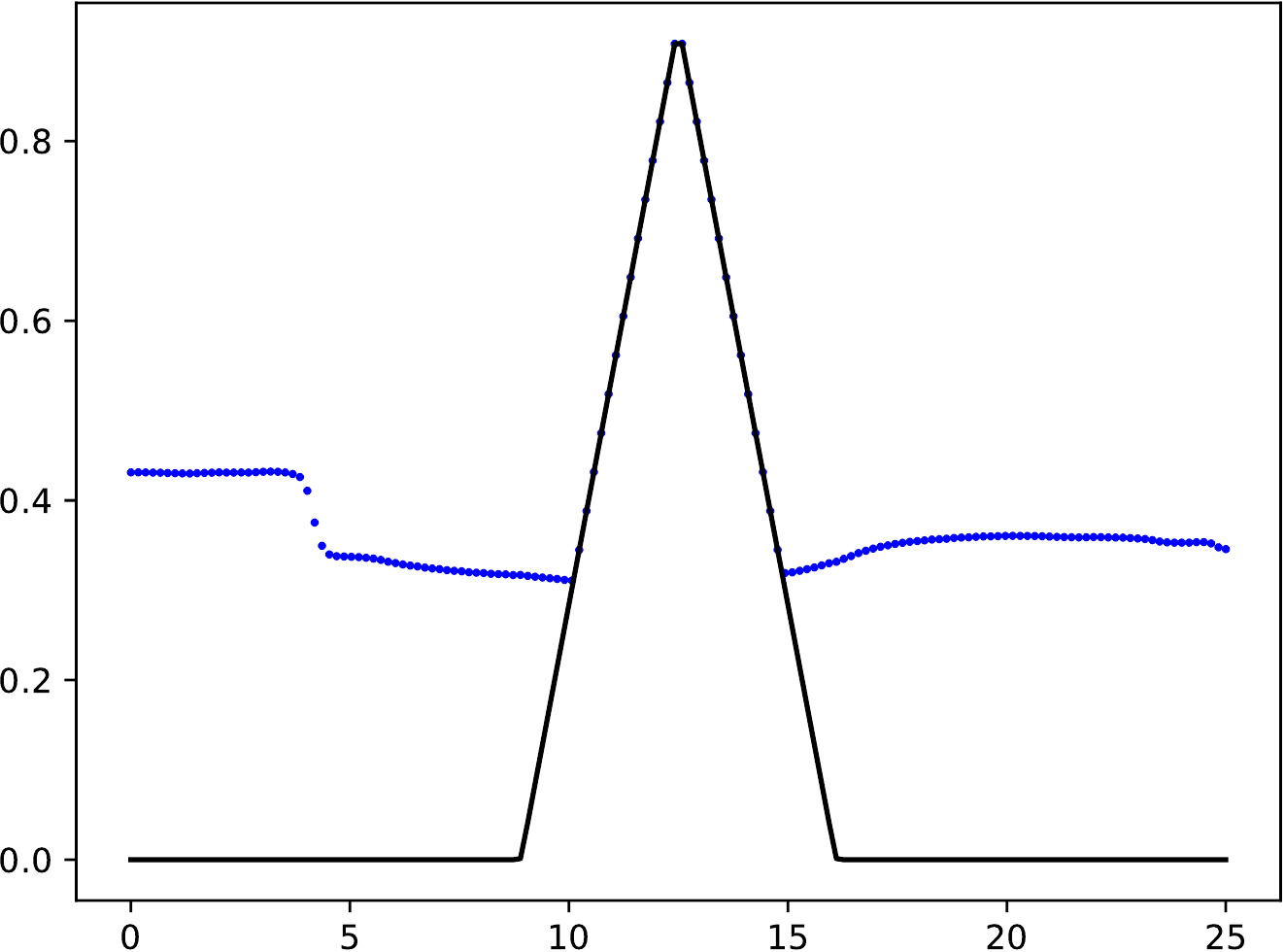}
    }
    \caption{ESDGSEM approximation with artificial viscosity and positivity limiter for the solitary wave runup for different grid resolutions with $N=3$ at $T=50$.}
    \label{fig:WaveRunup}
\end{figure}

\FloatBarrier

\subsection{Parabolic Partial Dam Break}
\label{subsec:WetParaDam}
In \cite{ESDGSEM2D_paper} the authors examined the ESDGSEM on curved meshes with a parabolic partial dam break test case. The mesh is shaped such that it aligns with the parabolic dam. We show the initial condition and the mesh in Figure \ref{fig:CurvedDamMesh}.
\begin{figure}[!ht]
   \centering
        \includegraphics[width=0.5\textwidth]{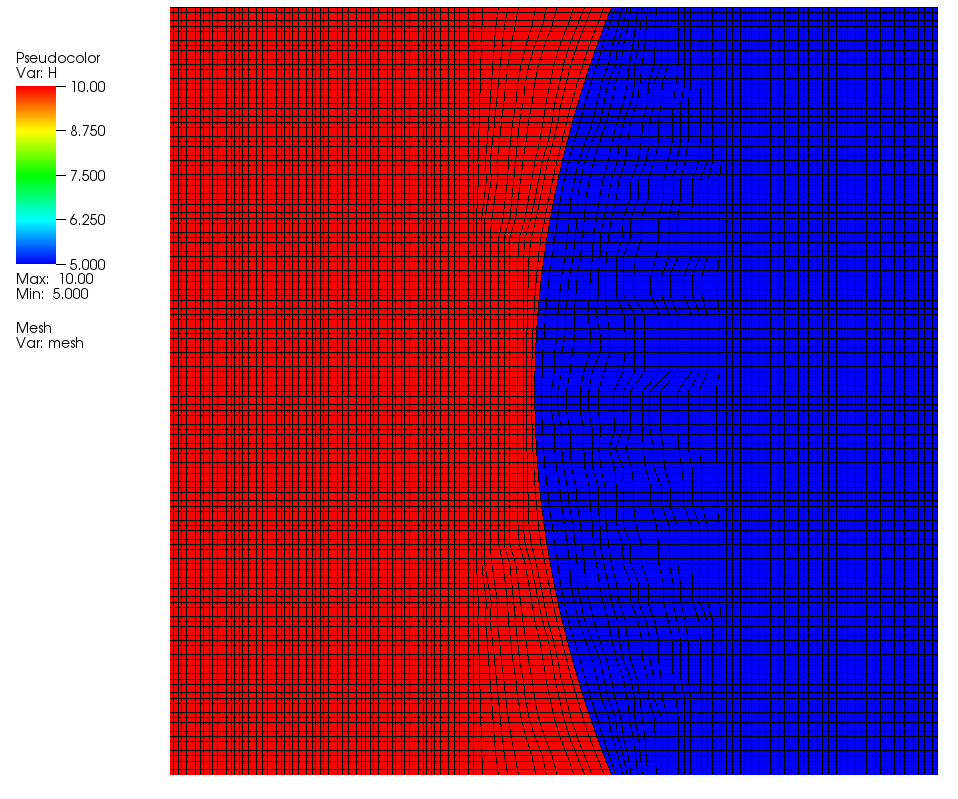}
    \caption{Initial condition and mesh for the parabolic dam break test case.}
    \label{fig:CurvedDamMesh}
\end{figure}While the results showed increased stability for the entropy stable scheme compared to a standard DGSEM, it also showed that the method suffered under oscillations in the shock region. We repeat this test with the additional dynamic artifical viscosity. The initial setup is given by
\begin{equation}
\begin{aligned}
&h= \left\{
\begin{aligned}
&10, \quad&\textrm{if } x< \frac{1}{25}y^2 - 0.25   \\
&5, \quad&\textrm{otherwise}
\end{aligned}
\right.,\\
&u=v=0.
\end{aligned}
\end{equation}
For simplicity, the gravitational constant is set to $g=1.0$ here. We also remove the discontinuous bottom topography from \cite{ESDGSEM2D_paper}. While the scheme still works for discontinuous bottoms, the multitude of different effects makes it hard to observe the impact of the artificial viscosity alone. We compare the results for $N=3$ and $N=7$ with and without added stabilization and show the the calculated dynamic viscosity coefficients in Figure \ref{fig:CurvedDamWet}. The base viscosity coefficient is set to $\epsilon_0=0.025$.
The stabilizing impact of the artificial viscosity is clearly visible. Oscillations have dramatically reduced at the shock front and also at the waves on the top side of the dam. The overshoot spikes close to the center of the dam break are significantly smaller. From the dynamic viscosity coefficient plots we can see that the shock front as well as the back waves at the top are smoothed by viscosity, whereas other smooth regions are not impacted. It is also visible that the viscosity works more accurately the finer the discretization is. For $N=7$ not only less elements are affected, but the absolute amount of viscosity is also reduced.
\begin{figure}[!ht]
   \centering
    \subfloat[Total water height $H$ without AV, $N=3$]
    {
        \includegraphics[width=0.5\textwidth]{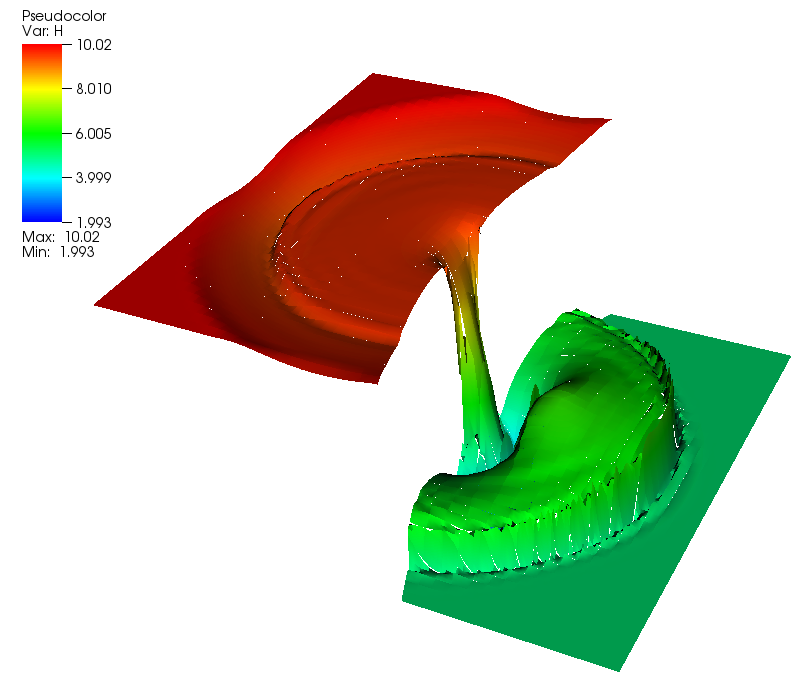}
    }
    \subfloat[Total water height $H$ without AV, $N=7$]
    {
        \includegraphics[width=0.5\textwidth]{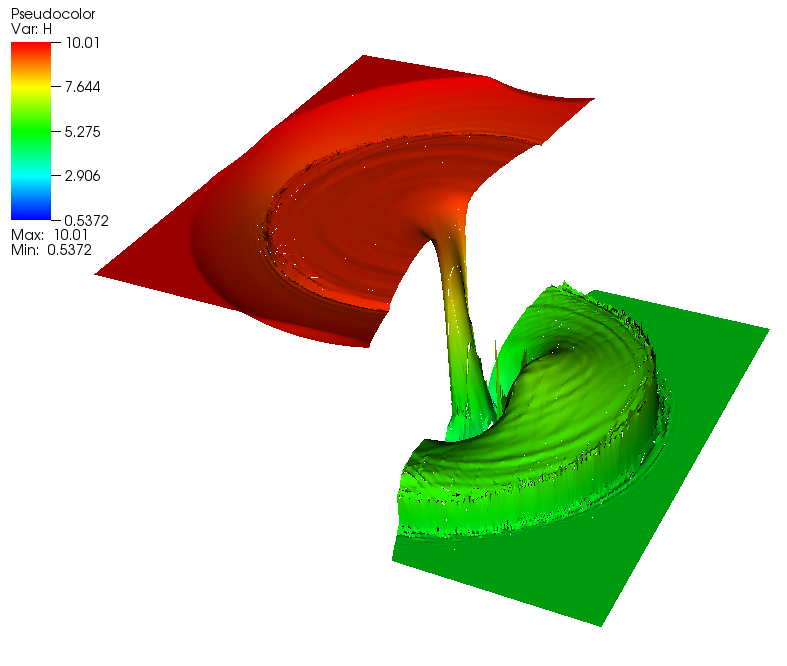}
    }
    \\
        \subfloat[Total water height $H$ with AV, $N=3$]
    {
        \includegraphics[width=0.5\textwidth]{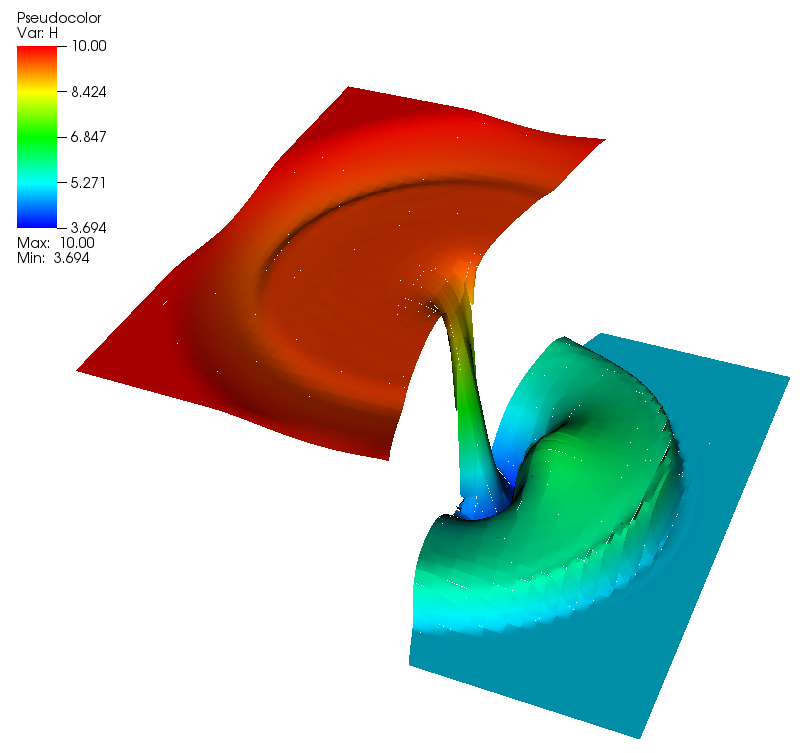}
    }
    \subfloat[Total water height $H$ with AV, $N=7$]
    {
        \includegraphics[width=0.5\textwidth]{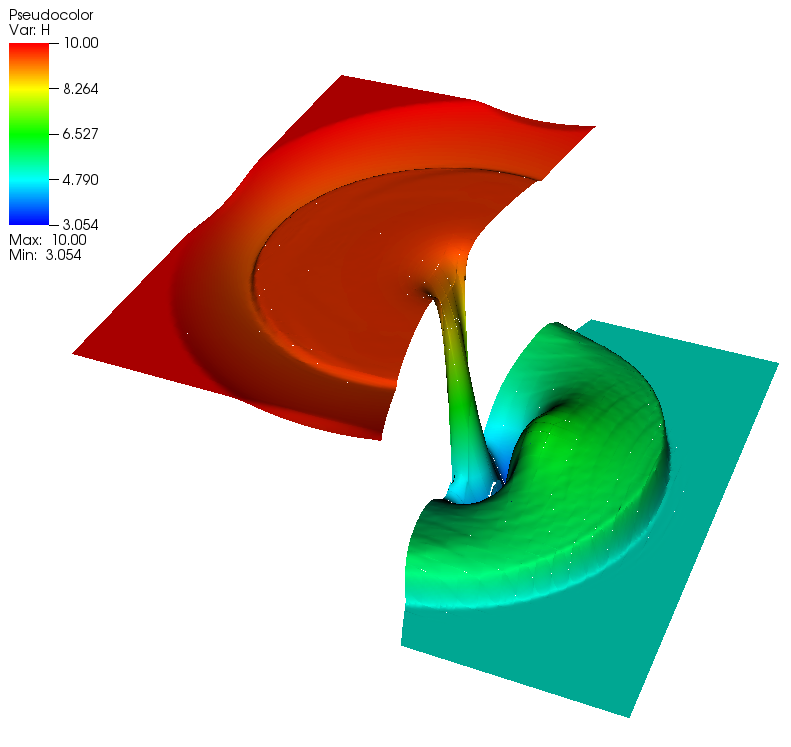}
    }
    \\
    \subfloat[Viscous coefficient $\epsilon$, $N=3$]
    {
        \includegraphics[width=0.5\textwidth]{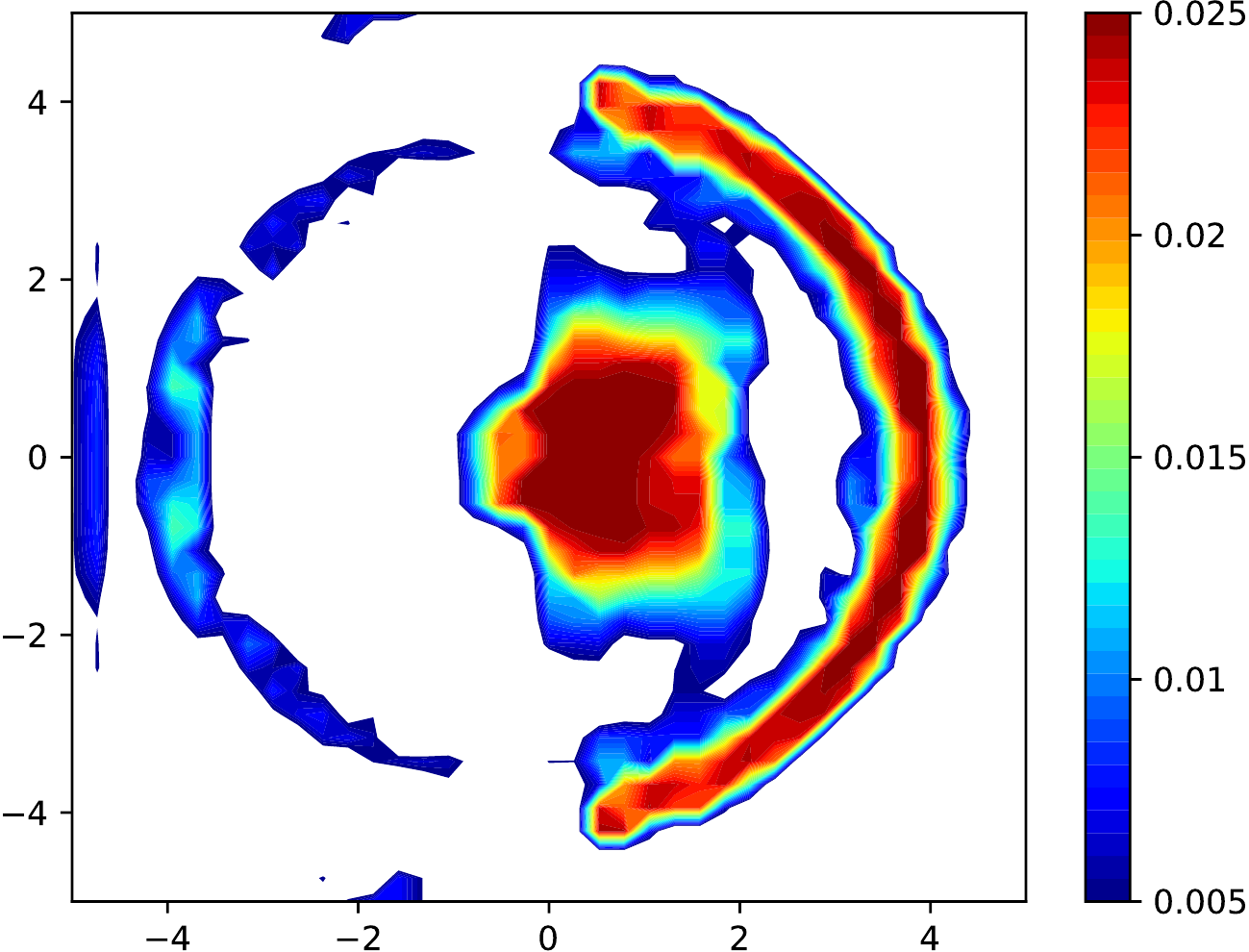}
    }
        \subfloat[Viscous coefficient $\epsilon$, $N=7$]
    {
        \includegraphics[width=0.5\textwidth]{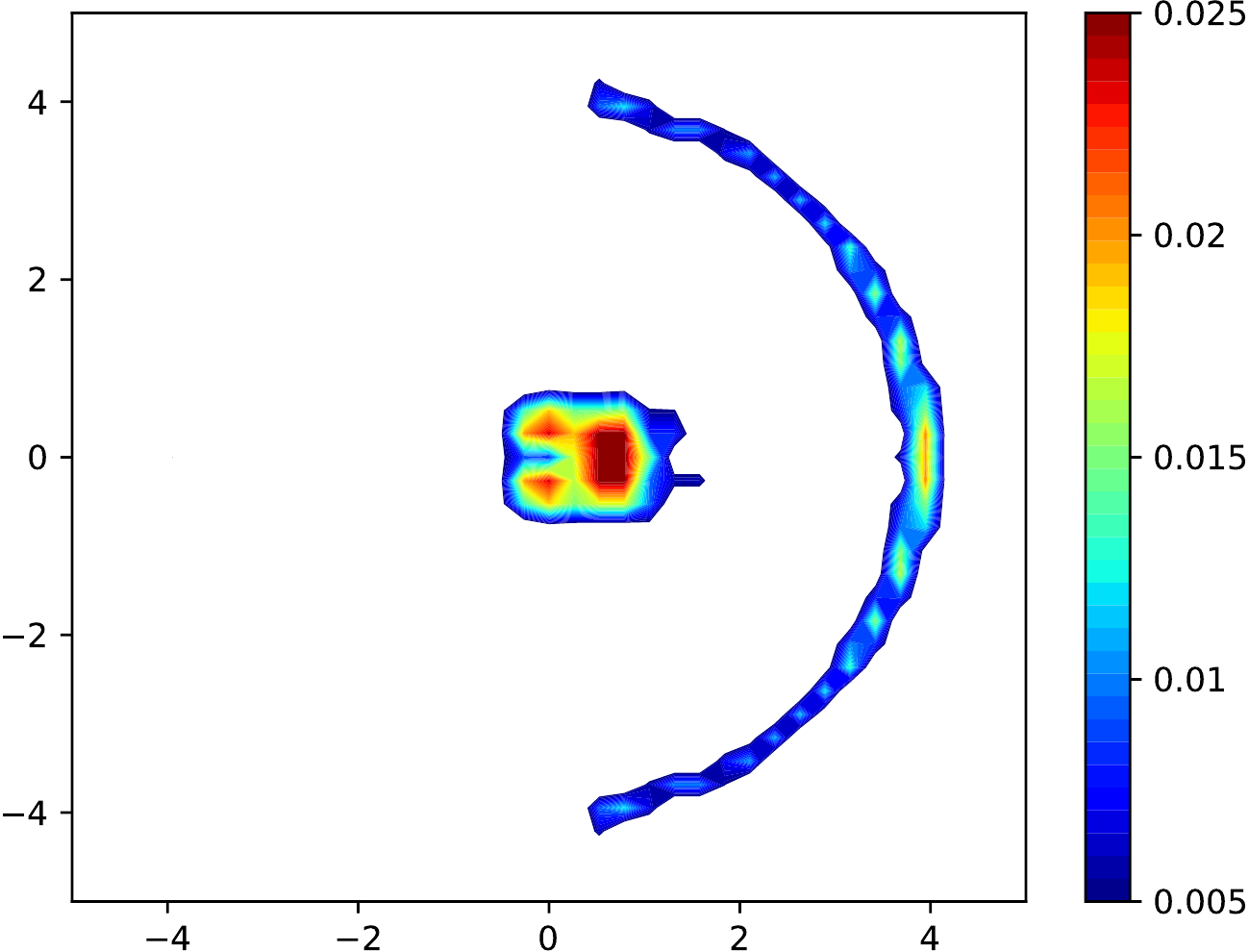}
    }
    \\
    \caption{ESDGSEM approximation for the curved dam break with and without artificial viscosity (AV) at $N=3$ and $N=7$ on a $40\times 40$ curved mesh at $T=1.5$.}
    \label{fig:CurvedDamWet}
\end{figure}

\FloatBarrier

\subsection{Wet/Dry Parabolic Partial Dam Break}
We modify the parabolic dam break problem to feature a dry area on the downstream side. We use the same mesh from Figure \ref{fig:CurvedDamMesh}. It is very challenging as it is a massive shock and thus requires the artificial viscosity as well as the positivity preserving limiter. The initial setup is given by
\begin{equation}
\begin{aligned}
&h= \left\{
\begin{aligned}
&10, \quad&\textrm{if } x< \frac{1}{25}y^2 - 0.25  \\
&0, \quad&\textrm{otherwise}
\end{aligned}
\right.,\\
&u=v=0.
\end{aligned}
\end{equation}
The gravitational constant is set $g=1.0$ again and the viscosity parameter is set to $\epsilon_0 = 0.05$ for $N=3$ and to $\epsilon_0 = 0.025$ for $N=7$. We plot the solution as well as the dynamic viscosity parameter for $N=3$ in Figure \ref{fig:CurvedDamDryN3} and for $N=7$ in Figure \ref{fig:CurvedDamDryN7}. As the dam break is steeper than in the completely wet case from Section \ref{subsec:WetParaDam}, the final time is set to $T=1.0$ so the water does not hit the back wall. The viscous parameter plots again show that only the critical regions are treated with the added artificial viscosity.

\begin{figure}[!ht]
   \centering
    \subfloat[Water height $H$, $T=0.5$]
    {
        \includegraphics[width=0.5\textwidth]{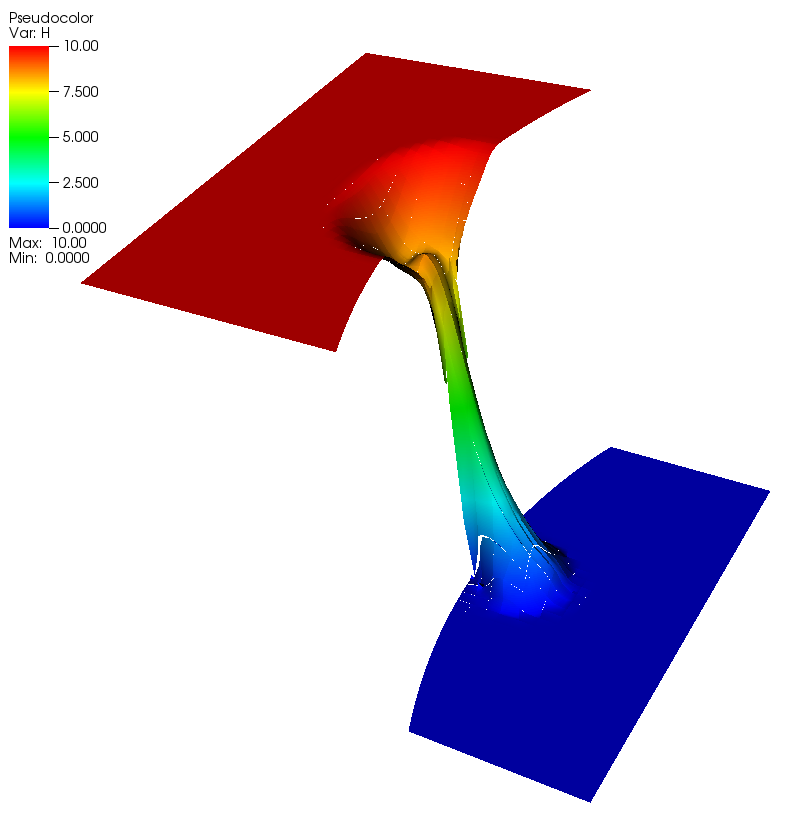}
    }
    \subfloat[Water height $H$, $T=1.0$]
    {
        \includegraphics[width=0.5\textwidth]{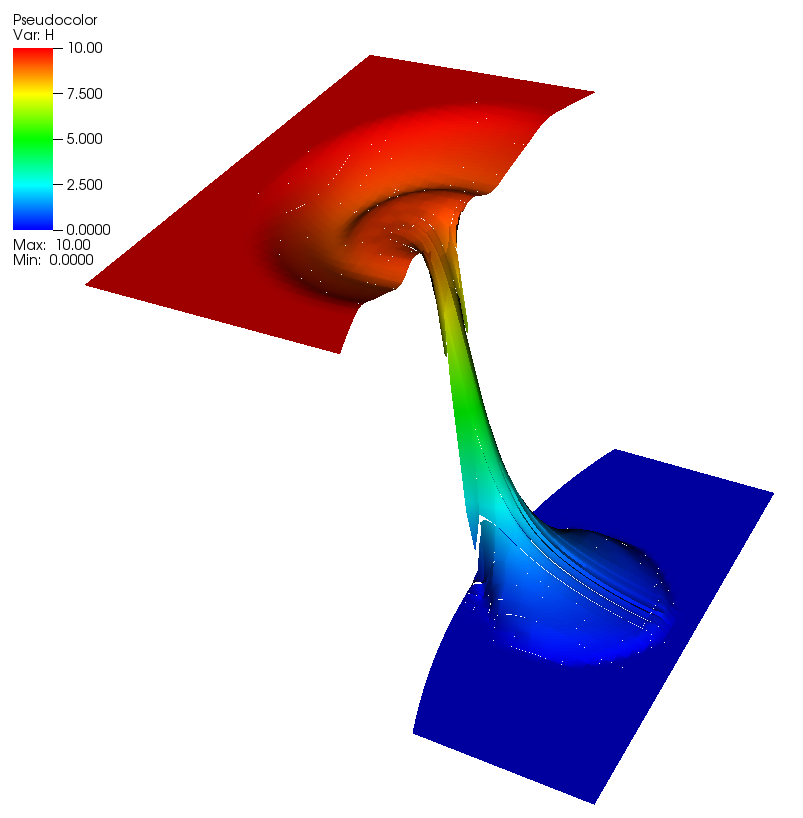}
    }
    \\
    \subfloat[Viscous coefficient $\epsilon$, $T=0.5$]
    {
        \includegraphics[width=0.5\textwidth]{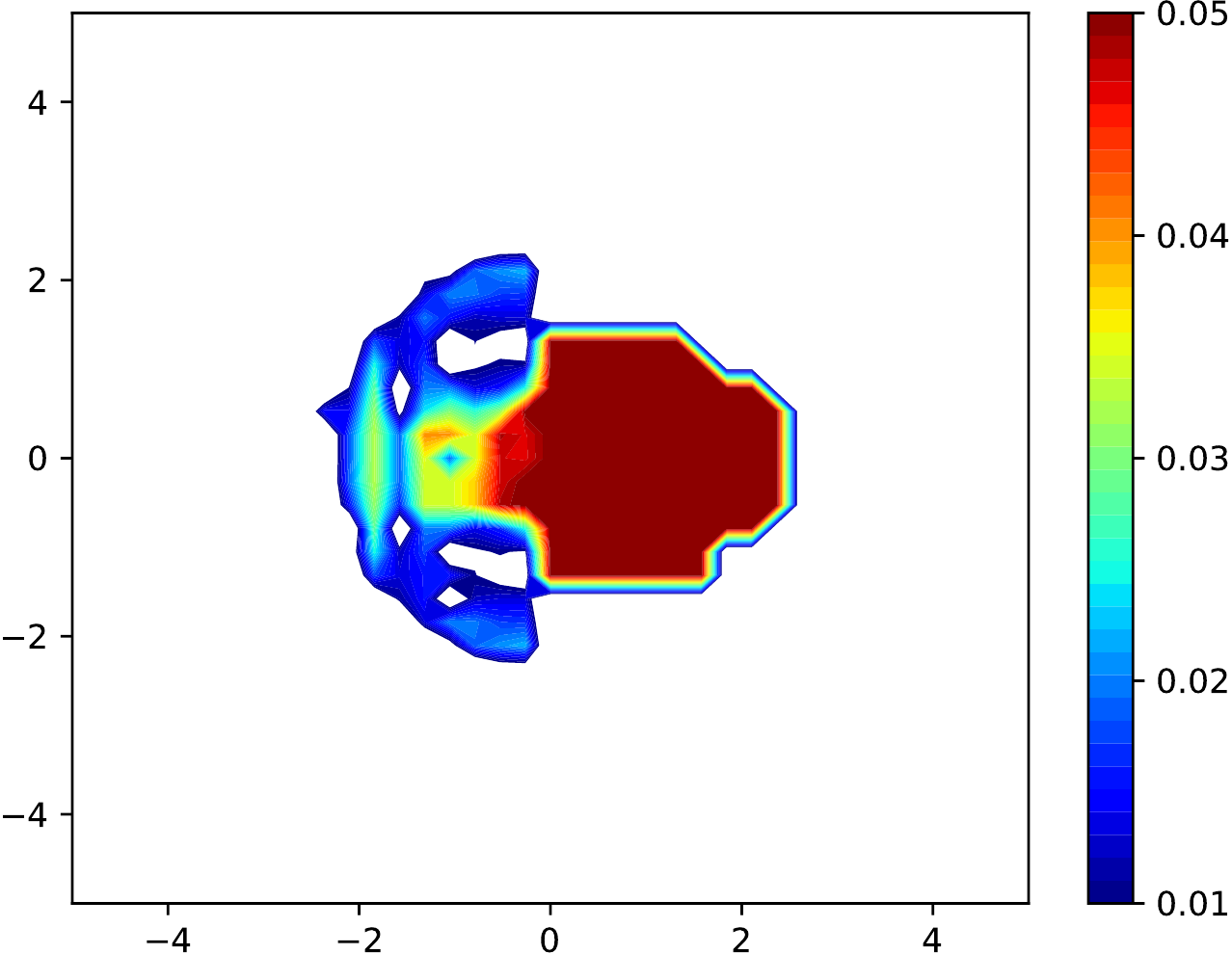}
    }
    \subfloat[Viscous coefficient $\epsilon$, $T=1.0$]
    {
        \includegraphics[width=0.5\textwidth]{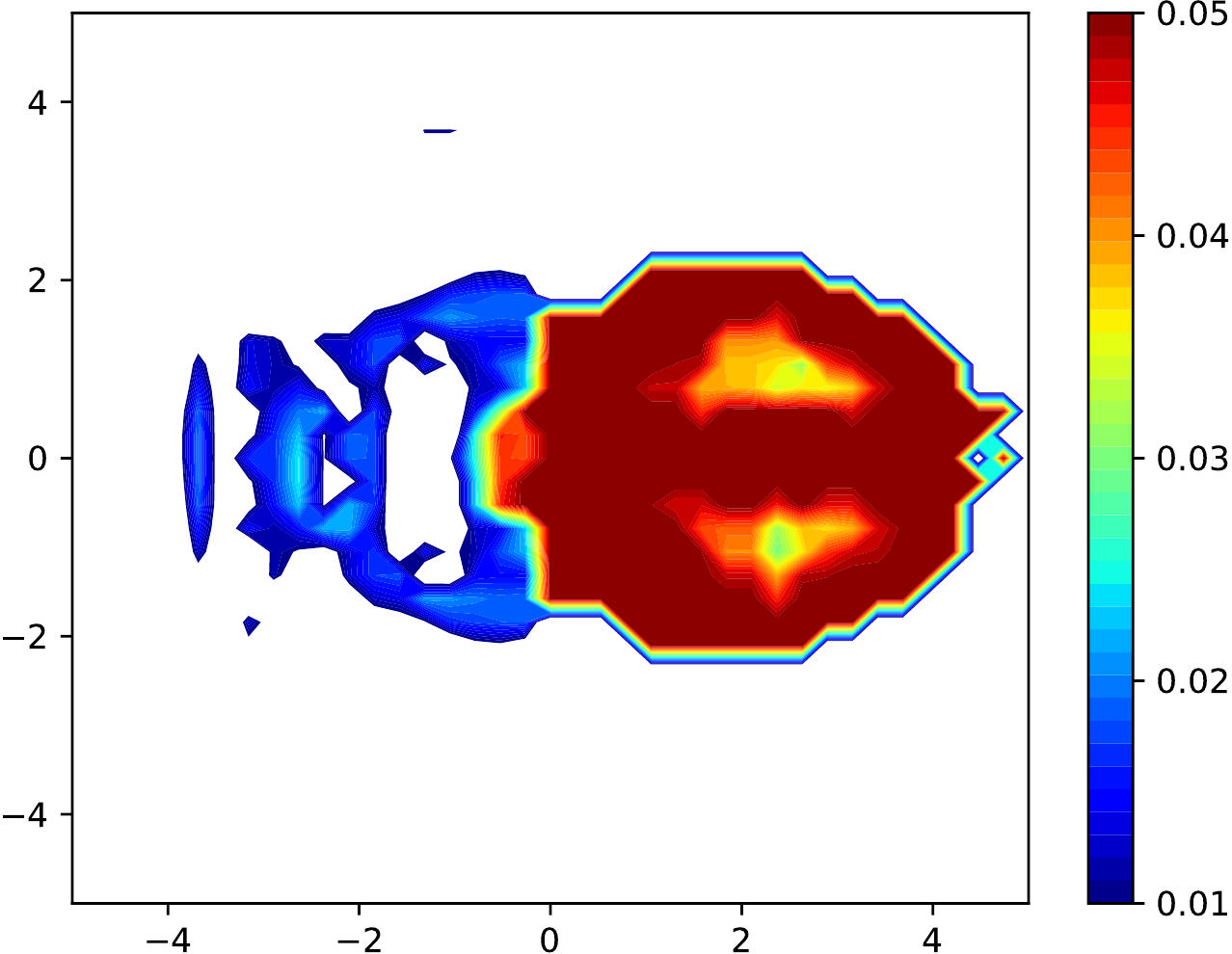}
    }
    \caption{ESDGSEM approximation with artificial viscosity for the curved dam break with zero water height on the downstream side at $N=3$ on a $40\times 40$ curved mesh.}
    \label{fig:CurvedDamDryN3}
\end{figure}
\begin{figure}[!ht]
   \centering
    \subfloat[Water height $H$, $T=0.5$]
    {
        \includegraphics[width=0.5\textwidth]{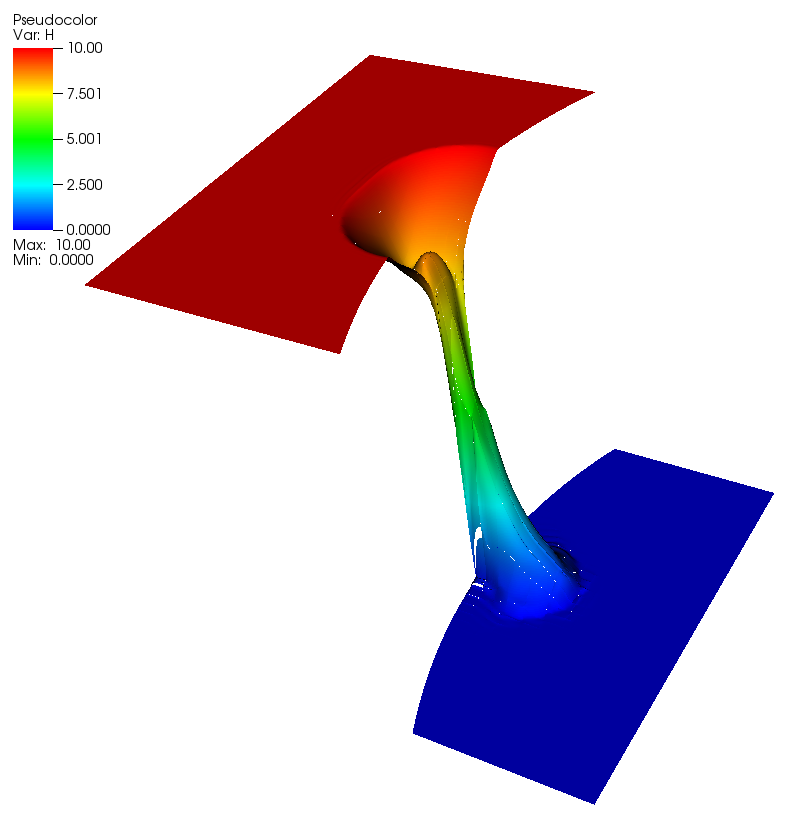}
    }
    \subfloat[Water height $H$, $T=1.0$]
    {
        \includegraphics[width=0.5\textwidth]{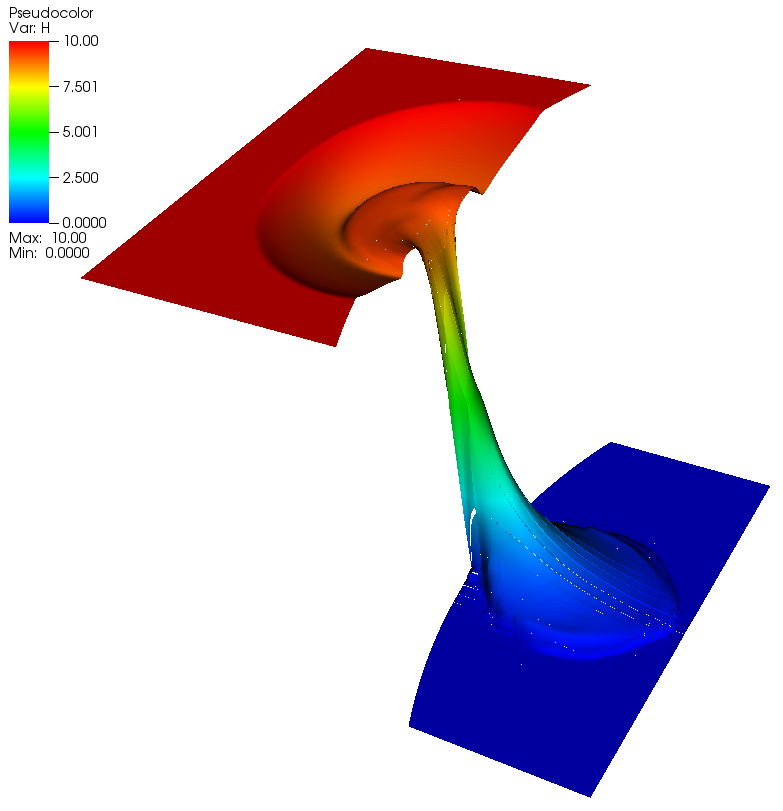}
    }
        \\
    \subfloat[Viscous coefficient $\epsilon$, $T=0.5$]
    {
        \includegraphics[width=0.5\textwidth]{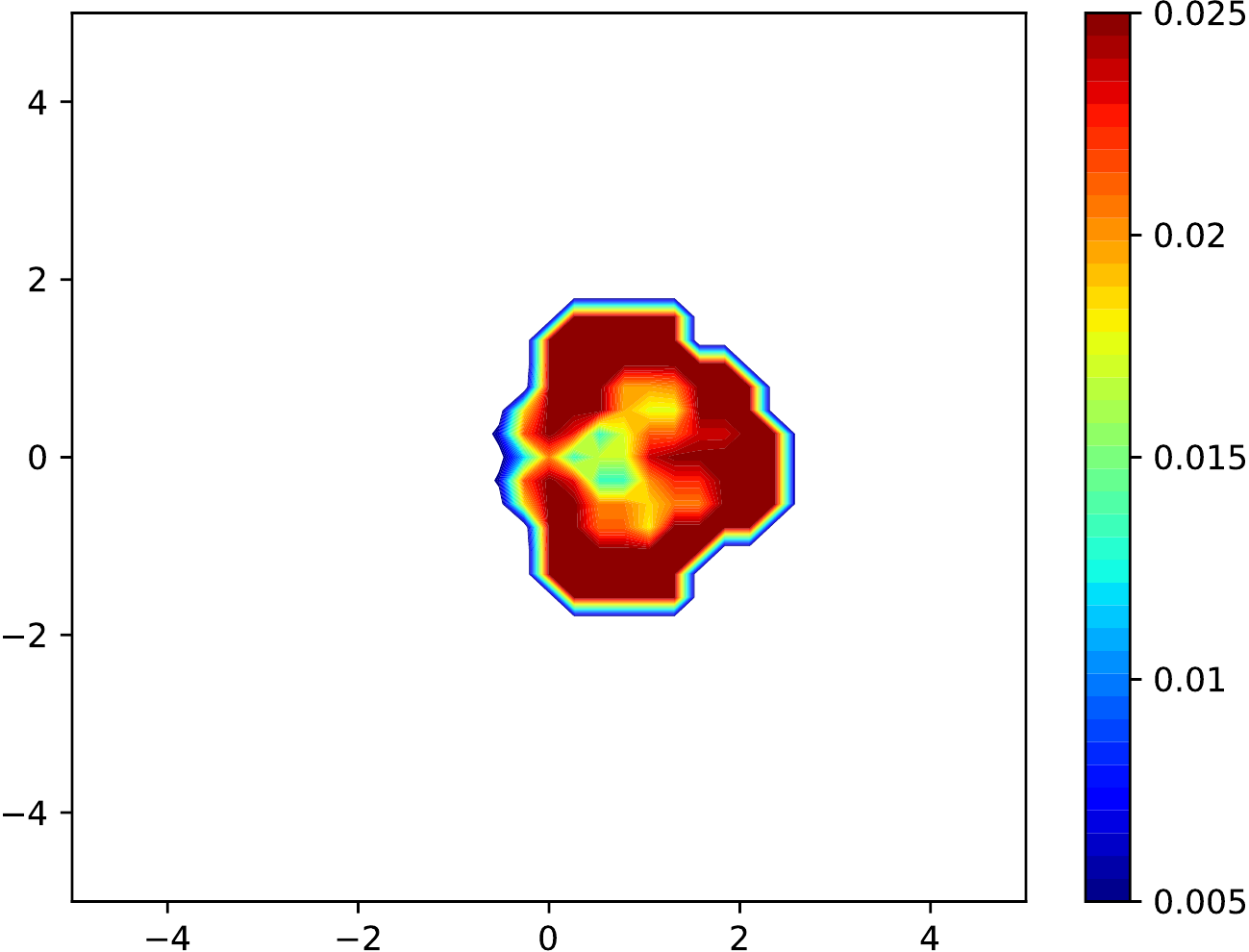}
    }
    \subfloat[Viscous coefficient $\epsilon$, $T=1.0$]
    {
        \includegraphics[width=0.5\textwidth]{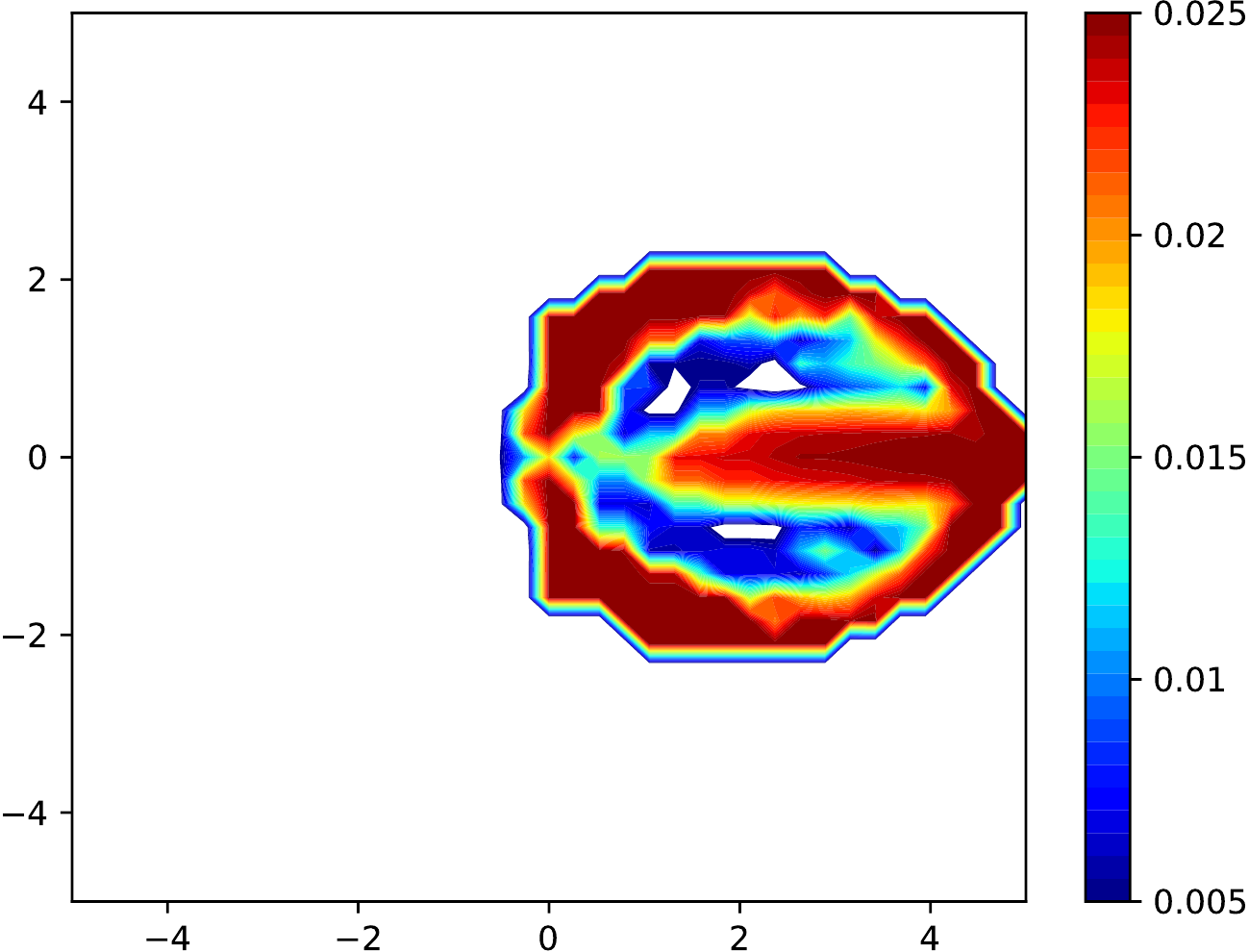}
    }
    \caption{ESDGSEM approximation with artificial viscosity for the curved dam break with zero water height on the downstream side at $N=7$ on a $40\times 40$ curved mesh.}
    \label{fig:CurvedDamDryN7}
\end{figure}
\FloatBarrier
\section{Conclusions}

In this work we extended the entropy stable discontinuous Galerkin (DG) spectral element approximation of Wintermeyer et al. \cite{ESDGSEM2D_paper} to include shock capturing capabilities as well as positivity preservation of the water height such that the numerical scheme can handle wet/dry regions. We demonstrated that these new features, necessary for applications in, e.g., oceanography, did not alter the entropy stable nature of the approximation. Further, we demonstrated that the entropy stable DG discretization for the shallow water equations is well suited for simulations on GPUs. In fact, we found that for polynomial orders of $N\leq 7$ the two methods remained memory bound on GPUs and had nearly identical runtimes. 

We then verified the properties of the scheme numerically. Specifically, we found that the entropy stable DG approximation remained conservative and entropy stable even with the additional shock capturing and positivity preserving methods. We also demonstrated that a numerical method which takes the entropy into account is useful to avoid unphysical solutions with an ``entropy glitch'' test case. Next, we provided five numerical examples to show the utility of the entropy stable, shock capturing, positive water height preserving DG method for problems that feature, among other things, smooth solutions with wet/dry regions, complex multi-shock interactions with bottom topographies or curvilinear element meshes.

\section*{Acknowledgements}
Gregor Gassner thanks the European Research Council for funding through the ERC Starting Grant ``An Exascale aware and Un-crashable Space-Time-Adaptive Discontinuous Spectral Element Solver for Non- Linear Conservation Laws" (Extreme), ERC grant agreement no. 714487. 

\bibliographystyle{plain}
\bibliography{References}

\begin{thebibliography}{10}

\bibitem{BassiRebay}
F.~Bassi and S.~Rebay.
\newblock A high-order accurate discontinuous finite element method for the
  numerical solution of the compressible {Navier}-{Stokes} equations.
\newblock {\em J. Comput. Phys.}, 131:267--279, 1997.

\bibitem{bollermann2011finite}
Andreas Bollermann, Sebastian Noelle, and M{\'a}ria
  Luk{\'a}{\v{c}}ov{\'a}-Medvid'ov{\'a}.
\newblock Finite volume evolution {G}alerkin methods for the shallow water
  equations with dry beds.
\newblock {\em Communications in Computational Physics}, 10(2):371--404, 2011.

\bibitem{bonev2017discontinuous}
Boris Bonev, Jan~S. Hesthaven, Francis~X. Giraldo, and Michal~A. Kopera.
\newblock Discontinuous {G}alerkin scheme for the spherical shallow water
  equations with applications to tsunami modeling and prediction.
\newblock Technical report, \'{E}cole Polytechnique F\'{e}d\'{e}rale de
  Lausanne, 2017.

\bibitem{briggs1995laboratory}
Michael~J Briggs, Costas~E Synolakis, Gordon~S Harkins, and Debra~R Green.
\newblock Laboratory experiments of tsunami runup on a circular island.
\newblock In {\em Tsunamis: 1992--1994}, pages 569--593. Springer, 1995.

\bibitem{brufau2002numerical}
P~Brufau, ME~V{\'a}zquez-Cend{\'o}n, and P~Garc{\'\i}a-Navarro.
\newblock A numerical model for the flooding and drying of irregular domains.
\newblock {\em International Journal for Numerical Methods in Fluids},
  39(3):247--275, 2002.

\bibitem{carpenter_esdg}
M.~Carpenter, T.~Fisher, E.~Nielsen, and S.~Frankel.
\newblock Entropy stable spectral collocation schemes for the
  {N}avier--{S}tokes equations: Discontinuous interfaces.
\newblock {\em SIAM Journal on Scientific Computing}, 36(5):B835--B867, 2014.

\bibitem{swefv2}
M.J. Castro, J.M. Gallardo, and C.~Par\'{e}s.
\newblock High-order finite volume schemes based on reconstruction of states
  for solving hyperbolic systems with nonconservative products. applications to
  shallow-water systems.
\newblock {\em Math Comput}, 75:1103--1134, 2006.

\bibitem{chan2016gpu}
Jesse Chan, Zheng Wang, Axel Modave, Jean-Francois Remacle, and Tim Warburton.
\newblock {GPU}-accelerated discontinuous {G}alerkin methods on hybrid meshes.
\newblock {\em Journal of Computational Physics}, 318:142--168, 2016.

\bibitem{chan2017gpu}
Jesse Chan and Tim Warburton.
\newblock {GPU}-accelerated {B}ernstein--{B}\'{e}zier discontinuous {G}alerkin
  methods for wave problems.
\newblock {\em SIAM Journal on Scientific Computing}, 39(2):A628--A654, 2017.

\bibitem{einfeldt1991godunov}
Bernd Einfeldt, Claus-Dieter Munz, Philip~L Roe, and Bj{\"o}rn Sj{\"o}green.
\newblock On {G}odunov-type methods near low densities.
\newblock {\em Journal of Computational Physics}, 92(2):273--295, 1991.

\bibitem{fisher2013}
Travis~C. Fisher and Mark~H. Carpenter.
\newblock High-order entropy stable finite difference schemes for nonlinear
  conservation laws: {F}inite domains.
\newblock {\em Journal of Computational Physics}, 252:518--557, 2013.

\bibitem{gallardo2007well}
Jos{\'e}~M Gallardo, Carlos Par{\'e}s, and Manuel Castro.
\newblock On a well-balanced high-order finite volume scheme for shallow water
  equations with topography and dry areas.
\newblock {\em Journal of Computational Physics}, 227(1):574--601, 2007.

\bibitem{gandham2015gpu}
Rajesh Gandham, David Medina, and Timothy Warburton.
\newblock {GPU} accelerated discontinuous {G}alerkin methods for shallow water
  equations.
\newblock {\em Communications in Computational Physics}, 18(1):37--64, 2015.

\bibitem{gassner_skew_burgers}
G.~Gassner.
\newblock A skew-symmetric discontinuous {Galerkin} spectral element
  discretization and its relation to {SBP-SAT} finite difference methods.
\newblock {\em SIAM Journal on Scientific Computing}, 35(3):A1233--A1253, 2013.

\bibitem{Gassner2017}
Gregor~J. Gassner, Andrew~R. Winters, Florian~J. Hindenlang, and David~A.
  Kopriva.
\newblock The {BR1} scheme is stable for the compressible {N}avier-{S}tokes
  equations.
\newblock {\em {Journal of Scientific Computing} (accepted manuscript)}, 2017.

\bibitem{gassner2016}
Gregor~J. Gassner, Andrew~R. Winters, and David~A. Kopriva.
\newblock Split form nodal discontinuous {G}alerkin schemes with
  summation-by-parts property for the compressible {E}uler equations.
\newblock {\em Journal of Computational Physics}, 327:39--66, 2016.

\bibitem{gassner2016well}
Gregor~J Gassner, Andrew~R Winters, and David~A Kopriva.
\newblock A well balanced and entropy conservative discontinuous {G}alerkin
  spectral element method for the shallow water equations.
\newblock {\em Applied Mathematics and Computation}, 272:291--308, 2016.

\bibitem{harten1983upstream}
Amiram Harten, Peter~D Lax, and Bram~van Leer.
\newblock On upstream differencing and {G}odunov-type schemes for hyperbolic
  conservation laws.
\newblock {\em SIAM review}, 25(1):35--61, 1983.

\bibitem{hesthaven_warburton}
{J.}~{S.} Hesthaven and {T.} Warburton.
\newblock {\em {N}odal {D}iscontinuous {G}alerkin {M}ethods: {A}lgorithms,
  {A}nalysis, and {A}pplications.}
\newblock {S}pringer {V}erlag, {N}ew {Y}ork, 2008.

\bibitem{karakus2017gpu}
Ali Karakus, Noel Chalmers, Kasia Swirydowicz, and Timothy Warburton.
\newblock {GPU} acceleration of a high-order discontinuous {G}alerkin
  incompressible flow solver.
\newblock {\em arXiv preprint arXiv:1801.00246}, 2017.

\bibitem{klockner2009nodal}
Andreas Kl{\"o}ckner, Tim Warburton, Jeff Bridge, and Jan~S Hesthaven.
\newblock Nodal discontinuous {G}alerkin methods on graphics processors.
\newblock {\em Journal of Computational Physics}, 228(21):7863--7882, 2009.

\bibitem{klockner2012solving}
Andreas Kl\"{o}ckner, Timothy Warburton, and Jan~S Hesthaven.
\newblock Solving wave equations on unstructured geometries.
\newblock {\em GPU Computing Gems}, 2:225, 2012.

\bibitem{klockner2013high}
Andreas Kl\"{o}ckner, Timothy Warburton, and Jan~S Hesthaven.
\newblock High-order discontinuous {G}alerkin methods by {GPU} metaprogramming.
\newblock In {\em GPU Solutions to Multi-scale Problems in Science and
  Engineering}, pages 353--374. Springer, 2013.

\bibitem{koprivabook}
David~A. Kopriva.
\newblock {\em Implementing Spectral Methods for Partial Differential
  Equations: Algorithms for Scientists and Engineers}.
\newblock Springer Publishing Company, Incorporated, 1st edition, 2009.

\bibitem{KoprivaGassner_GaussLob}
David~A. Kopriva and Gregor Gassner.
\newblock On the quadrature and weak form choices in collocation type
  discontinuous {G}alerkin spectral element methods.
\newblock {\em Journal of Scientific Computing}, 44(2):136--155, 2010-08-01.

\bibitem{leveque1998}
Randall~J. LeVeque.
\newblock Balancing source terms and flux gradients in high-resolution
  {G}odunov methods: the quasi-steady wave-propagation algorithm.
\newblock {\em Journal of Computational Physics}, 146(1):346--365, 1998.

\bibitem{lukacova2009entropy}
M{\'a}ria Luk{\'a}cov{\'a}-Medvid'ov{\'a} and Eitan Tadmor.
\newblock On the entropy stability of the {R}oe-type finite volume methods.
\newblock In {\em Proceedings of the twelfth international conference on
  hyperbolic problems, American Mathematical Society, eds. J.-G. Liu et al},
  volume~67, 2009.

\bibitem{Marras_GalerkinViscSW}
Simone Marras, Michal~A. Kopera, Emil~M. Constantinescu, Jenny Suckale, and
  Francis~X. Giraldo.
\newblock A continuous/discontinuous {G}alerkin solution of the shallow water
  equations with dynamic viscosity, high-order wetting and drying, and implicit
  time-integration.
\newblock 2016.
\newblock arXiv:1607.04547.

\bibitem{medina2015okl}
David Medina.
\newblock {\em {OKL}: A unified language for parallel architectures}.
\newblock PhD thesis, Rice University, 2015.

\bibitem{medina2014occa}
David~S Medina, Amik St-Cyr, and Timothy Warburton.
\newblock {OCCA}: A unified approach to multi-threading languages.
\newblock {\em arXiv preprint arXiv:1403.0968}, 2014.

\bibitem{modave2016gpu}
Axel Modave, Amik St-Cyr, and Tim Warburton.
\newblock {GPU} performance analysis of a nodal discontinuous {G}alerkin method
  for acoustic and elastic models.
\newblock {\em Computers \& Geosciences}, 91:64--76, 2016.

\bibitem{noelle2007}
Sebastian Noelle, Yulong Xing, and Chi-Wang Shu.
\newblock High-order well-balanced finite volume {WENO} schemes for shallow
  water equation with moving water.
\newblock {\em Journal of Computational Physics}, 226(1):29--58, 2007.

\bibitem{persson2006}
{P.-O.} Persson and J.~Peraire.
\newblock Sub-cell shock capturing for discontinuous {G}alerkin methods.
\newblock {\em AIAA Journal}, 112, 2006.

\bibitem{perthame1992second}
Beno{\i}t Perthame.
\newblock Second-order {B}oltzmann schemes for compressible {E}uler equations
  in one and two space dimensions.
\newblock {\em SIAM Journal on Numerical Analysis}, 29(1):1--19, 1992.

\bibitem{perthame1996positivity}
Beno{\i}t Perthame and Chi-Wang Shu.
\newblock On positivity preserving finite volume schemes for {E}uler equations.
\newblock {\em Numerische Mathematik}, 73(1):119--130, 1996.

\bibitem{phung2008numerical}
Dang~Hieu Phung.
\newblock Numerical study of long wave runup on a conical island.
\newblock {\em VNU Journal of Science, Earth Sciences}, 24:79--86, 2008.

\bibitem{ranocha2017}
Hendrik Ranocha.
\newblock Shallow water equations: Split-form, entropy stable, well-balanced,
  and positivity preserving numerical methods.
\newblock {\em {GEM-International Journal on Geomathematics}}, 8(1):85--133,
  2017.

\bibitem{shu1988efficient}
Chi-Wang Shu and Stanley Osher.
\newblock Efficient implementation of essentially non-oscillatory
  shock-capturing schemes.
\newblock {\em Journal of Computational Physics}, 77(2):439--471, 1988.

\bibitem{shu1989efficient}
Chi-Wang Shu and Stanley Osher.
\newblock Efficient implementation of essentially non-oscillatory
  shock-capturing schemes, {II}.
\newblock {\em Journal of Computational Physics}, 83(1):32--78, 1989.

\bibitem{swirydowicz2017acceleration}
Kasia {\'S}wirydowicz, Noel Chalmers, Ali Karakus, and Timothy Warburton.
\newblock Acceleration of tensor-product operations for high-order finite
  element methods.
\newblock {\em arXiv preprint arXiv:1711.00903}, 2017.

\bibitem{synolakis1987runup}
Costas~Emmanuel Synolakis.
\newblock The runup of solitary waves.
\newblock {\em Journal of Fluid Mechanics}, 185:523--545, 1987.

\bibitem{tadmor1984}
Eitan Tadmor.
\newblock Numerical viscosity and the entropy condition for conservative
  difference schemes.
\newblock {\em Mathematics of Computation}, 43:369--381, 1984.

\bibitem{tadmor2003}
Eitan Tadmor.
\newblock Entropy stability theory for difference approximations of nonlinear
  conservation laws and related time-dependent problems.
\newblock {\em Acta Numerica}, 12:451--512, 5 2003.

\bibitem{VATER20151}
Stefan Vater, Nicole Beisiegel, and J{\"o}rn Behrens.
\newblock A limiter-based well-balanced discontinuous {G}alerkin method for
  shallow-water flows with wetting and drying: One-dimensional case.
\newblock {\em Advances in Water Resources}, 85:1 -- 13, 2015.

\bibitem{whitham1974}
G.~B. Whitham.
\newblock {\em Linear and Nonlinear Waves}.
\newblock John Wiley and Sons, New York, 1974.

\bibitem{ESDGSEM2D_paper}
Niklas Wintermeyer, Andrew~R. Winters, Gregor~J. Gassner, and David~A. Kopriva.
\newblock An entropy stable nodal discontinuous {G}alerkin method for the two
  dimensional shallow water equations on unstructured curvilinear meshes with
  discontinuous bathymetry.
\newblock {\em Journal of Computational Physics}, 340:200--242, 2017.

\bibitem{winters2015comparison}
Andrew~R Winters and Gregor~J Gassner.
\newblock A comparison of two entropy stable discontinuous {G}alerkin spectral
  element approximations for the shallow water equations with non-constant
  topography.
\newblock {\em Journal of Computational Physics}, 301:357--376, 2015.

\bibitem{xing2011high}
Yulong Xing and Chi-Wang Shu.
\newblock High-order finite volume {WENO} schemes for the shallow water
  equations with dry states.
\newblock {\em Advances in Water Resources}, 34(8):1026--1038, 2011.

\bibitem{xing2013positivity}
Yulong Xing and Xiangxiong Zhang.
\newblock Positivity-preserving well-balanced discontinuous {G}alerkin methods
  for the shallow water equations on unstructured triangular meshes.
\newblock {\em Journal of Scientific Computing}, 57(1):19--41, 2013.

\bibitem{xing2010positivity}
Yulong Xing, Xiangxiong Zhang, and Chi-Wang Shu.
\newblock Positivity-preserving high order well-balanced discontinuous
  {G}alerkin methods for the shallow water equations.
\newblock {\em Advances in Water Resources}, 33(12):1476--1493, 2010.

\bibitem{zhang2010maximum}
Xiangxiong Zhang and Chi-Wang Shu.
\newblock On maximum-principle-satisfying high order schemes for scalar
  conservation laws.
\newblock {\em Journal of Computational Physics}, 229(9):3091--3120, 2010.

\bibitem{zhang2010positivity}
Xiangxiong Zhang and Chi-Wang Shu.
\newblock On positivity-preserving high order discontinuous {G}alerkin schemes
  for compressible {E}uler equations on rectangular meshes.
\newblock {\em Journal of Computational Physics}, 229(23):8918--8934, 2010.

\end{thebibliography}

\appendix
\section{The viscous parameter $\epsilon$}
\label{app:ViscPara}
The viscous parameter $\epsilon$ in \eqref{sw-visc} is chosen dynamically for each element dependent on the smoothness of the solution. To get an estimate for the smoothness we transform our nodal DG solution $Q$ to modal space $\hat{Q}$ by
\begin{equation}
    \label{eq:TransformationToModal}
    \hat{Q}_{ij} = \sum_{i=0}^N\sum_{j=0}^N V^{-1}_{ij} Q_{ij} V^{-1}_{ji},
\end{equation}
with Vandermonde matrix $\mat{V}$ defined by
\begin{equation}
\begin{aligned}
    \label{eq:Vandermonde}
    V_{ij} &=  L_j (\xi^{GL}_i) \sqrt{j+0.5}, \\
    V^{-1}_{ij} &= \left(\ell_j, \tilde{L}_i\right)_{L^2} \approx \sum_{l=0}^N  L_i (\xi^G_l)\,   \ell_j^{GL}(x^G_i) \, \omega^G_l \,\sqrt{i+0.5},
\end{aligned}
\end{equation}
where $L_i$ is the $i$-the Legendre polynomial, $\ell^{GL}_i$ the $i$-th Lagrange polynomial based on Legendre-Gauss-Lobatto nodes and $\xi_i^G$ the Legendre-Gauss nodes. The $\omega^G$ are the Legendre-Gauss quadrature nodes.
We compute shock indicators similar to \cite{persson2006} by
\begin{equation}
\label{eq:sigmadof}
    \sigma_{dof} = \log_{10} \left( \max \left( \frac{(Q-\tilde{Q},Q-\tilde{Q})_{L^2}}{(Q,Q)_{L^2}} , \frac{(\tilde{Q}-\tilde{\tilde{Q}},\tilde{Q}-\tilde{\tilde{Q}})_{L^2}}{(\tilde{Q},\tilde{Q})_{L^2}} \right)\right),
\end{equation}
with
\begin{equation}
\begin{aligned}
    \tilde{Q} &:= \sum_{i,j=0}^{N-1} \hat{Q}_{ij} \tilde{L}_i \tilde{L}_j, \\
    \tilde{\tilde{Q}} &:= \sum_{i,j=0}^{N-2} \hat{Q}_{ij} \tilde{L}_i \tilde{L}_j.
\end{aligned}
\end{equation}
With these definitions \eqref{eq:sigmadof} can be simplified to
\begin{equation}
    \sigma_{dof} = \log_{10} \left( 
    \max \left( 
    \frac{
        \sum_{i=0}^{N-1} \left(\hat{Q}^2_{iN}
        + \hat{Q}^2_{Ni}\right)
        + \hat{Q}^2_{NN}
    }{
        \sum_{i,j=0}^{N} \hat{Q}^2_{ij}
    } 
    , 
    \frac{
        \sum_{i=0}^{N-2} \left(\hat{Q}^2_{i(N-1)}
        + \hat{Q}^2_{(N-1)i}\right)
        + \hat{Q}^2_{(N-1)(N-1)}
    }{
        \sum_{i,j=0}^{N-1} \hat{Q}^2_{ij}
    } 
    \right)
    \right).
\end{equation}
This $\sigma_{dof}$ is used to determine the amount of viscosity applied in every element individually by
\begin{equation}
    \epsilon= \left\{
\begin{aligned}
&0, \quad&\textrm{if } \sigma_{dof} \leq \sigma_{min},   \\
&\half \epsilon_0 \Delta, \quad&\textrm{if } \sigma_{dof} \leq \sigma_{min}   \\
&\epsilon_0, \quad&\textrm{else}. 
\end{aligned}
\right.,\\
\end{equation}
and
\begin{equation}
    \Delta := 1.0 + \sin\left(\frac{\pi (\sigma_{dof} - \half (\sigma_{max} + \sigma_{min})}{\sigma_{max} - \sigma_{min}} \right).
\end{equation}

\section{Simplification of entropy stable normal numerical $h$ flux}\label{app:SimpleNumFlux}
We aim to find a compact expression for the first entry of $\tilde{F}^{*,es}$ defined by
\begin{equation}
\label{fStabilized2d}
\tilde{F}^{*,es} =\vec{F}^{*,es}\cdot\vec{n}.
\end{equation}
with
\begin{equation}
\begin{aligned}
\label{fesandges}
F^{*,es} &=F^{*,ec} - \half \mat{R}_f  \, \big|\mat{\Lambda} \big| \, \mat{R}_f^T \jump{\,\statevec{q}\,},\\
G^{*,es} &=G^{*,ec} - \half \mat{R}_g  \, \big|\mat{\Lambda} \big| \, \mat{R}_g^T \jump{\,\statevec{q}\,}.\\
\end{aligned}
\end{equation}
The shallow water equations are rotationally invariant and we can compute the numerical flux in normal direction by rotating the velocities into the new coordinate system and then evaluating the numerical flux in $x$-direction, $F^{*,es}$, with the rotated velocities
\begin{equation}
    \label{eq:rotatingVelocities}
    \begin{aligned}
        \tilde{u} &:= n_x u + n_y v, \\
        \tilde{v} &:= t_x u + t_y v = -n_y u + n_x v.
    \end{aligned}
\end{equation}
After computing the $x$-direction numerical flux we then rotate back into the original coordinate system. Taking everything into account we obtain the following formulas for the numerical fluxes in normal direction
\begin{equation}\label{eq:surfaceFlux2DRotated}
\begin{aligned}
\tilde{F}_1^{*,es}(W^{+},W^{-},\statevec{n}) &=F_1^{*,es}(\tilde{W}^{+},\tilde{W}^{-}) \\
\tilde{F}_2^{*,es}(W^{+},W^{-},\statevec{n}) 
&=n_x F_2^{*,es}(\tilde{W}^{+},\tilde{W}^{-}) 
+t_x F_3^{*,es}(\tilde{W}^{+},\tilde{W}^{-})
\\
\tilde{F}_3^{*,es}(W^{+},W^{-},\statevec{n}) 
&=n_y F_2^{*,es}(\tilde{W}^{+},\tilde{W}^{-}) 
+t_y F_3^{*,es}(\tilde{W}^{+},\tilde{W}^{-}).
\end{aligned}
\end{equation}
The numerical flux in $x$-direction is computed using rotated velocities by
\begin{equation}\label{eq:surfaceFlux2DCurved}
\begin{aligned}
F^{*,es}(\tilde W^{+},\tilde W^{-})= \begin{pmatrix}
\average{h}\average{\tilde{u}}\\[0.1cm]
\average{h}\average{\tilde{u}}^2 + \frac{1}{2}\,g\,\average{h^2}\\[0.1cm]
\average{h}\average{\tilde{u}}\average{\tilde{v}}\\[0.1cm]
\end{pmatrix}- \half \mat{\tilde{R}}  \, \big|\mat{\tilde{\Lambda}} \big| \, \mat{\tilde{R}}^T \jump{\,\tilde{q}\,},
\end{aligned}
\end{equation}
with matrix of right eigenvectors
\begin{equation}
\label{RightEigenvectors2DCurved}
\mat{\tilde{R}} =\begin{pmatrix}
1 & 0 & 1\\
\average{\tilde{u}}+\average{c} & 0 & \average{\tilde{u}}-\average{c} \\
\average{\tilde{v}} & 1 & \average{\tilde{v}}\\
\end{pmatrix},
\end{equation}
and scaled diagonal eigenvalue matrix
\begin{equation}
\label{ScalingMatrix2DCurved}
\big|\mat{\tilde{\Lambda}} \big|  =\frac{1}{2g}\begin{pmatrix}
\big|\average{\tilde{u}}+\average{c} \big|& 0 & 0\\
 0 & 2g\big|\average{h}\average{\tilde{u}}\big| & 0 \\
0 & 0 & \big|\average{\tilde{u}}-\average{c}\big|
\end{pmatrix}.
\end{equation}
We compute the first row of the matrix product of the dissipation term by multiplying the first row $\mat{\tilde{R}}_1$
\begin{equation}
\begin{aligned}
& 2g \mat{\tilde{R}}_1  \, \big|\mat{\tilde{\Lambda}} \big| \, \mat{\tilde{R}}^T\\ 
&=
\begin{pmatrix}
1, & 0, & 1
\end{pmatrix}
\begin{pmatrix}
\big|\average{\tilde u}+\average{c} \big|& 0 & 0\\
 0 & 2g\big|\average{h}\average{\tilde u}\big| & 0 \\
0 & 0 & \big|\average{\tilde u}-\average{c}\big|
\end{pmatrix}
\begin{pmatrix}
1 & \average{\tilde u}+\average{c} & \average{\tilde v}\\
0 & 0 & 1 \\
1 & \average{\tilde u}-\average{c} & \average{\tilde v} \\
\end{pmatrix} \\[0.15cm]
&=
\begin{pmatrix}
\big| \average{\tilde u}+\average{c}\big|,  
& 0 ,
& \big| \average{\tilde u}-\average{c}\big| \\
\end{pmatrix}
\begin{pmatrix}
1 & \average{\tilde u}+\average{c} & \average{\tilde v}\\
0 & 0 & 1 \\
1 & \average{\tilde u}-\average{c} & \average{\tilde v} \\
\end{pmatrix} 
\\[0.15cm]
&=
\begin{pmatrix}
A,
& \average{\tilde u} A+\average{c}B,
& \average{\tilde v} A
\end{pmatrix},
\end{aligned}
\end{equation}
with 
\begin{equation}
\begin{aligned}
A &:= \big|\average{\tilde u}+\average{c}\big| + \big|\average{\tilde u}-\average{c}\big|,\\
B &:= \big|\average{\tilde u}+\average{c}\big| - \big|\average{\tilde u}-\average{c}\big|.
\end{aligned}
\end{equation}
Multiplying by $\jump{\,\tilde{q}}$ we find the first entry of $\half \mat{\tilde{R}}  \, \big|\mat{\tilde{\Lambda}} \big| \, \mat{\tilde{R}}^T \jump{\,\tilde{q}}$ 
\begin{equation}
\label{fStabilized_FirstEntry_2D}
\begin{aligned}
 2g\left(\mat{\tilde{R}}  \, \big|\mat{\tilde{\Lambda}} \big| \, \mat{\tilde{R}}^T \jump{\,\tilde{q}}\right)_1
&=
\begin{pmatrix}
A,
& \average{\tilde u} A+\average{c}B,
& \average{\tilde v} A
\end{pmatrix}
\begin{pmatrix}
g\jump{h+b} - \half \jump{\tilde u^2}- \half \jump{\tilde v^2} \\
 \jump{\tilde u}\\
 \jump{\tilde v}\\
\end{pmatrix}\\
&=
A\left(g\jump{h+b} - \average{\tilde u} \jump{\tilde u}- \average{\tilde v}\jump{\tilde v}\right)
+\left(\average{\tilde u} A+\average{c}B\right)\jump{\tilde u}
+ \average{\tilde v} A \jump{\tilde v}\\
&=
gA\jump{h+b}
+\average{c}B\jump{\tilde u}.
\end{aligned}
\end{equation}
We can find bounds on $A$ and $B$ by
\begin{equation}
\begin{aligned}
\label{BoundsAB}
2 \lambda_{\text{max}} = 2 \max \left( \big| \tilde u \big| + \big| c\big|\right)
\geq 2 (\big| \average{\tilde u}\big| + \big| \average{c}\big|) &\geq A  \geq 2 \big| \average{\tilde u}\big| \\
 \lambda_{\text{max}} = \max \left( \big| \tilde u \big| + \big| c\big|\right)\geq  (\big| \average{\tilde u}\big| + \big| \average{c}\big|) &\geq B \geq - \big| \average{\tilde u} -  \average{c}\big| \geq  -  \lambda_{\text{max}} .\\
\end{aligned}
\end{equation}
Overall, we can express the first entry of the entropy stable numerical flux in terms of the rotated velocities
\begin{equation}
\begin{aligned}
F_1^{*,es} 
&=\average{h}\average{\tilde{u}}- \frac{1}{4g} \left( A \jump{gh +gb } +   \average{c}B\jump{\tilde{u}}\right).
\end{aligned}
\end{equation}

\section{Versioning of OCCA Optimized Volume Integral Kernels}
\label{app:Versioning}
\begin{itemize}
\item{Version 1: Minimizing Global Loads / Utilizing Shared Memory}

The first step in improving the kernel performance is to reduce the number of costly loads from global GPU memory. To do this, we load all the necessary data only once. If the value is needed by several nodes of the element, we store it in shared memory, which is fast but limited. If it is only needed by an individual node, and thus only in one thread, we store it in a thread-local register. Also, data from shared memory that is used multiple times is loaded to register memory only once.

\item{Version 2: Declaring variables constant and pointers restricted}

As an additional step to improve data storage and transfer, we declare all variables that do not change their value during the computation as \textbf{constant}. We also set all pointers passed to the kernel as restricted. We store the constant values $\half g$ and $\fourth g$ as kernel infos at the start of the program. We also introduce an additional shared memory array that stores the inverse of the water height $1/h$ which is used in the flux calculations.

\item{Version 3: Multiple Elements per Block}

One GPU thread block is typically able to handle multiple DG elements in parallel. We aim to make full use of the GPU compute power and reduce the number of idle threads. Thus we introduce a parameter $NE_{Block}$ that sets the number of elements handled in one thread block. This number obviously depends on the polynomial order and typically decreases with $N$. Unfortunately changes in this parameter can drastically impact the performance of the kernel, so this is a parameter open to optimization.
\item{Version 4: Optimizing the Loops}

We need to  make sure that memory access is aligned as much as possible. Adjacent threads should access adjacent global device memory. This is ensured by accessing index $i,j$ as $i \times (N+1)+j$ if $j$ is the innermost loop index. Also shared memory is accessed fastest if the innermost loop is over the outermost index. So if the shared variable is accessed as $s\_Q[\text{ieLoc}][i][j]$ the inner loops should be over $\text{ieLoc}$, $i$, $j$ in exactly this order. Also, loop unrolling is added to the serial inner loops.

\item{Version 5: Avoid Bank Conflicts, add Padding}

To avoid bank conflicts we increase the size of the shared memory arrays by one, if $N+1$ is a multiple of $4$. This is done by a variable $nglPad$ which is $1$ or $0$ depending on the polynomial degree and added to the last entry of the shared memory arrays.

\item{Version 6: Split inner loop \& Hide shared memory loads}

We split the inner loop in two and compute the $F$ and $G$ fluxes and their contribution to the volume integral separately. This potentially opens up room for the compiler to optimize register loads and ease register pressure. We also change the order of operations such that variables needed for the update of the time integral such as $J$, $b_x$ or $b_y$ are loaded before the flux derivatives are computed. We hope that this potentially hides loads time behind the flux computations. We also introduce separate variables for the flux derivatives for $F$ and $G$ and then add them together in the end in the update step.

\end{itemize}

\end{document}